\documentclass[oneside]{amsart}
\usepackage[utf8]{inputenc}

\usepackage{amssymb}
\usepackage[abbrev]{amsrefs}

\usepackage{hyperref}
\usepackage{enumitem} 

 \usepackage{todonotes}

\usepackage{amsmath,euscript}
\usepackage{amsthm}
\usepackage{graphicx}
\usepackage{mathrsfs}
\usepackage{epsf}
\usepackage{xcolor}
\usepackage{psfrag}
\usepackage{amsfonts,amssymb,amsthm,amsmath} 

\usepackage{epsfig}
\usepackage{graphicx}
\usepackage{color}

\usepackage[toc,page]{appendix}

\usepackage{xspace}

\usepackage[margin=1.77in]{geometry}

\usepackage{changes}
\definechangesauthor[name={JZ}, color=blue]{jf}
\definechangesauthor[name={NE}, color=red]{ml}
\definechangesauthor[name={MS}, color=green]{ms}



\definecolor{green}{rgb}{0.0, 0.5, 0.5}
\definecolor{yellow}{rgb}{0.5, 0.5, 0}
\definecolor{lgray}{gray}{0.9}
\definecolor{llgray}{gray}{0.95}
\definecolor{lllgray}{gray}{0.975}

\usepackage{soul}
\setstcolor{red}

\theoremstyle{plain}%
\newtheorem{theorem}{Theorem}
\newtheorem{proposition}[theorem]{Proposition}%
\newtheorem{lemma}[theorem]{Lemma}
\newtheorem{corollary}[theorem]{Corollary}
\newtheorem{conjecture}[theorem]{Conjecture}

\theoremstyle{remark}%
\newtheorem{remark}{Remark}%

\theoremstyle{definition}%
\newtheorem{definition}[theorem]{Definition}%

\numberwithin{theorem}{section}
\numberwithin{equation}{section}

\newcommand{\inn}[2]{\left\langle#1,\,#2\right\rangle}
\DeclareMathOperator{\ran}{ran}

\DeclareMathOperator{\Null}{Null}
\DeclareMathOperator{\supp}{supp}

\DeclareMathOperator{\diag}{diag}

\DeclareMathOperator{\sgn}{sign}

\newcommand{\grad}{\nabla}
\newcommand{\Lap}{\Delta}
\newcommand{\di}{\partial}

\newcommand{\si}{\sigma}
\newcommand{\eps}{\epsilon}
\newcommand{\ls}{\lesssim}
\newcommand{\g}{\gamma}
\newcommand{\al}{\alpha}
\newcommand{\Si}{\Sigma}

\renewcommand{\b}{\bar}
\newcommand{\Cb}{\mathbb{C}}

\newcommand{\Zb}{\mathbb{Z}}
\newcommand{\Rb}{\mathbb{R}}

\renewcommand{\H}{\mathcal{H}}
\newcommand{\Lc}{\mathcal{L}}

\newcommand{\Sc}{\mathcal{S}}

\newcommand{\Hb}{\mathbb{H}}

\newcommand{\om}{\omega}
\newcommand{\Om}{\Omega}
\newcommand{\Ga}{\Gamma}

\renewcommand{\l}{\lambda} 
\renewcommand{\b}{\bar} 


\newcommand{\abs}[1]{\left\lvert#1\right\rvert}
\newcommand{\norm}[1]{\left\lVert#1\right\rVert}

\newcommand{\br}[1]{\langle#1\rangle}
\newcommand{\Set}[1]{\left\{#1\right\}}
\newcommand{\fullfunction}[5]{\ensuremath{
		\begin{array}{ccrcl}
			{#1}    & \colon  & {#2} & \longrightarrow & {#3} \\
			\mbox{} & \mbox{} & {#4} & \longmapsto     & {#5}
\end{array}}}

\newcommand{\md}[6]{\ensuremath{
		\ifinner
		\tfrac{\partial{^{#2}}#1}{\partial{#3^{#4}}\partial{#5^{#6}}}
		\else
		\tfrac{\partial{^{#2}}#1}{\partial{#3^{#4}}\partial{#5^{#6}}}
		\fi
}}
\newcommand{\del}[1]{\left(#1\right)}
\newcommand{\thmref}[1]{Theorem~\ref{#1}}

\newcommand{\secref}[1]{Section~\ref{#1}}
\newcommand{\lemref}[1]{Lemma~\ref{#1}}
\newcommand{\propref}[1]{Proposition~\ref{#1}}
\newcommand{\remref}[1]{Remark~\ref{#1}}
\newcommand{\figref}[1]{Figure~\ref{#1}}
\newcommand{\corref}[1]{Corollary~\ref{#1}}


\newcommand{\ba}[1]{\begin{array}{#1}}
	\newcommand{\ea}{\end{array}}

\newcommand{\be}{\begin{equation}}
	\newcommand{\ee}{\end{equation}}
\newcommand{\bea}{\begin{eqnarray}}
	\newcommand{\eea}{\end{eqnarray}}
\newcommand{\beann}{\begin{eqnarray*}}
	\newcommand{\eeann}{\end{eqnarray*}}

\newcommand{\R}{\mathbb{R}}
\newcommand{\C}{\mathbb{C}}
\newcommand{\Z}{\mathbb{Z}}
\newcommand{\T}{\mathbb{T}}

\newcommand{\bH}{{\mathbb{H}}}

%

\newcommand{\cE}{\mathcal{E}}

\newcommand{\cH}{\mathcal{H}}

\newcommand{\cL}{\mathcal{L}}         

\newcommand{\cS}{\mathcal{S}}

\newcommand{\nc}{\newcommand}

\nc{\bet}{\beta}
\nc{\G}{\Gamma}

\nc{\lam}{\lambda}

\nc{\ta}{\tau}
\nc{\w}{\omega}
\nc{\io}{\iota}
\nc{\h}{\theta}
\nc{\s}{\sigma}
\nc{\Lam}{\Lambda}

\nc{\vphi}{\varphi}

\nc{\Omt}{\tilde{\Omega}}


\newcommand{\re}{\operatorname{Re}}
\newcommand{\im}{\operatorname{Im}}
\renewcommand{\Re}{\operatorname{Re}}
\renewcommand{\Im}{\operatorname{Im}}

\nc{\lra}{\leftrightarrow}
\nc{\ra}{\rightarrow}
\nc{\ff}{\Leftrightarrow}

\newcommand{\one}{{\bf{1}}}
\nc{\bfone}{{\bf 1}}

\nc{\p}{\partial}
\nc{\pt}{\partial_t}
\nc{\ptt}{\partial_t^2}
\nc{\dt}{\partial_t}

\newcommand{\n}{\nabla}
\nc{\dA}{\nabla_A}

\newcommand{\DETAILS}[1]{}


\usepackage{tikz}

\begin{document}
	\title[GLE on non-compact Riemann surfaces]{Ginzburg-Landau Equations on Non-compact Riemann Surfaces}

\author[N. M. Ercolani]{Nicholas M. Ercolani}
\address{Department of Mathematics, University of Arizona, Tucson, AZ 85721-0089, USA}
\email{ercolani@math.arizona.edu}

\author[I. M. Sigal]{Israel Michael Sigal}
\address{Department of Mathematics, University of Toronto, Toronto, ON M5S 2E4, Canada }
\email{im.sigal@utoronto.ca}

	\author[J. Zhang]{Jingxuan Zhang
}
\address{Department of Mathematical Sciences, University of Copenhagen, Copenhagen 2100, Denmark}
\address{Department of Mathematics, University of Toronto, Toronto, ON M5S 2E4, Canada}
\email{jingxuan.zhang@math.ku.dk}

\date{April 17, 2023}
\subjclass[2020]{35Q56 (primary); 35J66 (secondary)  }
\keywords{Ginzburg–Landau equations; Superconductivity; Bifurcation theory; Elliptic equations on Riemann surfaces}
\begin{abstract}

	We study the Ginzburg-Landau equations on line bundles over non-compact Riemann surfaces with constant negative curvature. We prove existence of solutions with energy strictly less than that of the constant curvature (magnetic field) one. These solutions are the non-commutative generalizations of the Abrikosov vortex lattice of superconductivity. Conjecturally, they are (local) minimizers of the Ginzburg-Landau energy. We obtain precise asymptotic expansions of these solutions and their energies in terms of the curvature of the underlying Riemann surface. Among other things, our result shows the spontaneous breaking of the gauge-translational symmetry of the Ginzburg-Landau equations.
\end{abstract}

	\maketitle

	\section{Introduction}\label{sec:1}
%
%
%
%
%
%

We consider the Ginzburg-Landau equations on a line bundle $E$
over a Riemann surface $\Si$ with a Hermitian metric $h$:
\begin{equation}
	\label{GL}
	\tag{GL}
	\begin{aligned}
		-\Lap_a\psi&=\kappa^2\del{1-\abs{\psi}^2}\psi,\\
		d^*d a&=\Im \del{\bar{\psi}\grad_a\psi}.
	\end{aligned}
\end{equation}
Here $\kappa>0$ is a fixed parameter.
$\psi$ and  $a$ are respectively a section of and a connection $1$-form on the line bundle $E$.
$\grad_a$ is the covariant derivative induced by the $1$-form $a$, and 
$-\Lap_a=\grad_a^*\grad_a$ is the covariant Laplacian,
{both acting on sections of $E$.}
$d$ denotes the  exterior derivative on $\Si$.
Note that the adjoint $\grad_a^*$ depends on the metric $h$.
See Appendix \ref{sec:A} for detailed definitions.

{In the standard Ginzburg-Landau equations, $\kappa$ is the dimensionless Ginzburg-Landau material parameter, $\psi$ the complex order parameter for the electronic condensate on $\Si$, and $a$ the vector potential, with the $2$-form $da$ giving the 
magnetic field,  $\abs{\psi}^2$ the local density of superconducting
electrons, and $J(\psi,a):=\Im (\bar{\psi}\grad_a\psi)$ the supercurrent density.}  

\eqref{GL} are the Euler-Lagrange equations for the
Ginzburg-Landau energy,
\begin{equation}
	\label{1.1}
	\cE(\psi,a,h)=\int_\Si \del{\frac{1}{2}\abs{\grad_a\psi}^2+\frac{1}{2}\abs{da}^2+\frac{\kappa^2}{4}\del{\abs{\psi}^2-1}^2}\,\om.
\end{equation}
The Hermitian metric $h$ enters \eqref{1.1} through the area $2$-form $\om$ induced by $h$. 

{Physically, \eqref{1.1} corresponds to the Ginzburg-Landau Helmholz free
energy. By the Chern-Weil correspondence (see \secref{sec:2.5} below), $\cE(\psi,a,h)$ can be parametrized by the average magnetic field in the sample.
It is related to the Ginzburg-Landau Gibbs free energy, depending
on applied magnetic field through the Legendre transform.
For more discussions, see \cite{MR3381531}. }

 {One can think of solutions to \eqref{GL} as non-commutative versions of the Abrikosov vortex lattices, with commutative lattice $\cL$ acting on $\Cb$ by translations replaced by a non-commutative one -- a Fuchsian group $\Ga$ acting on the Poincar\'e half-plane $\Hb$. See \secref{sec:2} for details. 
 	
 	One can also connect \eqref{GL} to the Ginzburg-Landau equations on a thin superconducting membrane,
 and we conjecture that the mathematical techniques developed in this 
 paper can be applied to this latter model. See \cite{MR1426137}
 for a review of the physics problem.}

 		\eqref{GL} is the first and arguably the simplest gauge theory. Indeed, \eqref{GL} is invariant under local $U(1)$-gauge transforms
 		\begin{equation}
 			\label{gaugetransf}
 			(\psi,a)\mapsto (g\psi,a+g^{-1}dg),
 		\end{equation}
 		where $g$ is a $U(1)$-valued isomorphism 
 		\footnote{The fact that \eqref{gaugetransf} is indeed a symmetry for \eqref{GL} follows from the relations
 			$\grad _a(g\psi)=g\grad_a\psi+(dg)\psi=g(\grad_a+g^{-1}dg)\psi=g\grad_{a+g^{-1}dg}\psi
 			$, and 
 			$\Im(\overline{g\psi}\grad_a(g\psi))=\Im(\overline{g\psi}g\grad_{a+g^{-1}dg}\psi)=\Im (\b\psi g\grad_{a+g^{-1}dg}\psi).$
 		}
		{and $a$ is a gauge field related to the standard connection on the principal $U(1)$-bundle.}

In this paper,
{we construct nontrivial solutions  to the Ginzburg-Landau equations \eqref{GL} defined on a unitary line bundle $E$ over} a non-compact Riemann surface $\Si$ of finite volume. {
Our existence theory holds on the arithmetic surfaces}
\begin{equation}\label{1.2}
	\Sigma\cong \Hb /\Ga(N),\quad N=1,2,\ldots,
\end{equation}
where $\Hb$ is the Poincar\'e half-plane,  equipped with a hyperbolic metric
with constant negative curvature,
and 
$\Ga(N)$ is the principal congruence subgroup of level
$N$, {acting on $\Hb$ by M\"obius transform.} 
\eqref{GL} on a non-compact Riemann surface
$\Si$  of the form \eqref{1.2} could serve as a toy model of a superconducting circuit with
several narrow open ends \cite{Pnueli}. 
The precise results are given in Thms.~\ref{thm1.2}--\ref{thm1.1}.

%
%
%

We also obtain asymptotics for the energy of the our solutions and we show that under condition \eqref{1.100}, the energy of the our solutions is lower than that of the constant curvature solution (see \eqref{1.3} below). The precise energy estimate is given in \corref{cor1.2}.

{Throughout the paper, we fix some non-compact Riemann surface
	$(\Si,h)$, together with a unitary line bundle $E\to\Si$,
	and then seek a solution pair $(\psi,a)$ consisting of 
	a section of and a connection on $E$.}
The parameter $\kappa>0$ in \eqref{GL} is a
	fixed  number. Unless otherwise stated, dependence of
	various quantities on $\kappa$ is {not displayed but} always
	understood.

\subsection{Main Results}
\label{sec:1.1}
To state our main result, 
	we introduce some definitions. To fix the ideas, we consider the family of {hyperbolic}
metrics on $\Si$ given by
\begin{equation}
	\label{hr}
	h_r = \frac{r}{(\Im z)^2} dz \otimes d\bar{z}\quad  (r>0).
\end{equation}
The corresponding area $2$-form is given by
\begin{equation}\label{omr}
	\om_r=\frac{ri}{2(\Im z)^2}dz\wedge d\bar z.
\end{equation}
Each $h_r$ turns $\Si$ into a surface with constant curvature
$-1/r$. 
Denote 
$\abs{\Si}$ the area of $\Si$ w.r.t.~the standard hyperbolic $2$-form $\om_1\equiv \tfrac{i}{2(\Im z)^2}dz\wedge d\bar z.$ Then the surface $(\Si,h_r)$ has area $\abs{\Si}_r=r\abs{\Si}$.

For fixed  $\Si$, $E$, and each hyperbolic $h_r$, 
\eqref{GL} has the 
following constant curvature (or magnetic field) solutions:
\begin{equation}
	\label{1.3}	{\psi\equiv 0,\quad a =a^{b_r}},\end{equation}
where 
$\psi$ is the zero-section on the 
line bundle $E$, and 
$a^{b_r}$ is a constant curvature connection 
satisfying
\begin{equation}
	\label{1.4}
	da^{b_r}= {b_r}\om_r,
\end{equation}
and 
{ \begin{equation}\label{b} {b_r}\equiv b(r,\Si,E):=\frac{2\pi \deg E}{\abs{\Si}_r}=\frac{b}{r},
\end{equation}
where $\deg E$ is the degree (or the first Chern number) of the bundle $E$ (see \secref{sec:2.2} for definitions), and 
\begin{equation}\label{b0}
	b:=\frac{2\pi\deg E}{\abs{\Si}}.
\end{equation}}

The value of $b_r$ in \eqref{b} is determined by the Chern-Weil correspondence, see \eqref{2.20} below.  {Once $\Si,\,E$ are fixed, the value $b_r$ can be computed explicitly using the Gauss-Bonnet formula \eqref{2.8} and the curvature parameter $r$ in the background metric \eqref{hr}. }


In the standard Ginzburg-Landau equations,  solutions of the form \eqref{1.3}
correspond to normal, non-superconducting
states.

For fixed $\Si$, $E$, let  $b_r=b/r$ with $r>0$.
The value $b_r$ turns out to be the smallest eigenvalue of $-\Lap_{a^{b_r}}$ (see \secref{secLap} for discussions).
 {We denote by 
\begin{align}\label{Kdef}
	K(r)=\Null(-\Lap_{a^{b_r}}-b_r)
\end{align} the finite dimensional null space  of $-\Lap_{a^{b_r}}-b_r$ acting on the space of square integrable sections on $E\to \Si$. Now, we define the  function 
\begin{equation}\label{beta}
		{\beta(r):=\min\Set{{\br{\abs{\xi}^4}}:\xi\in K(r),\br{\abs{\xi}^2}=1}.}
	\end{equation} }
Here and below, {$\br{f}:=\frac{1}{\abs{\Si}_r}\int f$. Note that, by H\"older's inequality, $\beta(r)>1$.} Eq.~\eqref{beta} 
 contains information about the energy of the solutions as a function of $r$ (see \corref{cor1.2}).

We define another function of \textit{threshold Ginzburg-Landau parameter}, $\kappa_c(r)$, as
\begin{equation}\label{kappac}
 {	\kappa_c(r):=\sqrt{\frac{1}{2}\del{1-\frac{1}{\beta(r)}}}.}
\end{equation}

{Finally, let $\cH^k$ and $\vec\cH^k$ be the Sobolev spaces of order $k$ of sections and weakly co-closed $1$-forms (i.e., $d^*\al =0$ in the distributional sense) on the line bundle $E\to\Si$,  $O_{\cH^k}$ and $O_{\vec \cH^k}$ stand for error terms in the sense of the norms in $\cH^k$ and $\vec \cH^k$, and let
\begin{align}\label{X}
		X^k:=\H^k\times \vec\H^k.
\end{align}
The definitions of these spaces are standard. See Appendix \ref{sec:A} for details. 
}


	
Now, we are ready to formulate our main results:

\begin{theorem}[Existence and uniqueness] \label{thm1.2}

	Let $(\Si,h_r),\,r>0$, be a non-compact Riemann surface equipped
	with the hyperbolic metric \eqref{hr} with 
	finite area.
	Let $E\to\Si$ be a unitary line bundle, with $b:=2\pi\deg E/\abs{\Si}>0.$	Suppose 
{	\begin{equation}\label{b0cond}
		{b\ne1/2},\quad
	\end{equation}
and $r>0$ satisfies}
	{\begin{align}
		& {0<\abs{\kappa^2r-{b}}\ll1, \quad  {(\kappa-\sqrt {b_r})(\kappa -\kappa_c (r))}>0},
		\label{1.5}\\
				\label{Kcond}
		&{\dim_\Cb K(r)=1.} 
		\end{align}}
Then, for each $r$ as above, there exists a solution 
	\begin{equation}
		\label{1.7'}
		(\psi(r), a(r)) 
	\end{equation}
	to \eqref{GL} in a neighbourhood of $U\subset X^k$, $k\ge2$,
	around $(0,a^{{b_r}})$. 

	Moreover, 
	the solution \eqref{1.7'} is unique in $U\subset X^k$ up to a  gauge symmetry (see \eqref{gaugetransf}).
\end{theorem}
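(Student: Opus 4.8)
The plan is to obtain the solution as a bifurcation off the normal branch $(0,a^{b_r})$ as the parameter $\tau:=\kappa^2-b_r$ crosses zero; note $\kappa^2r-b=r\tau$, so the first part of \eqref{1.5} means precisely $0<\abs{\tau}\ll1$. First I would encode the Coulomb gauge in the function space by seeking $a=a^{b_r}+\alpha$ with $\alpha\in\vec\H^k$ (so $d^*\alpha=0$ is automatic) and $(\psi,\alpha)\in X^k$ near the origin; since $k\ge2$, $\H^k$ is a multiplication algebra and the nonlinearities are smooth maps. Substituting into \eqref{GL} gives $(-\Lap_{a^{b_r}}-\kappa^2)\psi=N_1(\psi,\alpha)$ with $N_1=[\Lap_{a^{b_r}}-\Lap_a]\psi-\kappa^2\abs{\psi}^2\psi$ at least cubic, and $d^*d\alpha=N_2(\psi,\alpha)$ with $N_2=\Im(\bar\psi\grad_a\psi)$ at least quadratic. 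The linearization at the origin is block diagonal: the $\psi$-block is $L_\tau:=-\Lap_{a^{b_r}}-\kappa^2$, whose kernel at $\tau=0$ is exactly $K(r)$ because $b_r$ is the lowest eigenvalue of $-\Lap_{a^{b_r}}$; the $\alpha$-block is $d^*d$ on co-closed $1$-forms.

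Second, I would run a Lyapunov--Schmidt reduction. Let $P$ be the $\H^0$-orthogonal projection onto $K(r)$ and split $\psi=\xi+\eta$, $\xi=P\psi\in K(r)$, $\eta=(1-P)\psi$. On $\ran(1-P)$ the operator $L_\tau$ is invertible with a bounded inverse, uniformly for small $\tau$, by the spectral gap of $-\Lap_{a^{b_r}}$ above $b_r$; and $d^*d$ is invertible on the complement of the harmonic $1$-forms, where the right-hand side $N_2$ lands. Hence the complementary equations $(1-P)(L_\tau\eta-N_1)=0$ and $d^*d\alpha-N_2=0$ can be solved for $(\eta,\alpha)=(\eta(\xi),\alpha(\xi))$ by the implicit function theorem, with $\eta=O_{\cH^k}(\norm{\xi}^3)$ and $\alpha=O_{\vec\cH^k}(\norm{\xi}^2)$.

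Third, substituting back produces the finite-dimensional bifurcation equation $P\bigl(L_\tau(\xi+\eta(\xi))-N_1\bigr)=0$ on $K(r)$. By \eqref{Kcond}, $\dim_\Cb K(r)=1$, so I write $\xi=t\phi_0$ with $\phi_0$ a fixed unit generator and $t\in\Cb$; the global-phase gauge transform $(\psi,a)\mapsto(e^{i\h}\psi,a)$ lets me take $t=s\ge0$. Using $L_\tau\phi_0=-\tau\phi_0$ and pairing against $\phi_0$ collapses everything to a single real scalar equation $\tau=c(r)\,s^2+O(s^4)$, where the leading coefficient $c(r)$ combines the direct quartic term $\kappa^2\br{\abs{\phi_0}^4}$ with the magnetic back-reaction carried by $\alpha(\xi)=O(s^2)$. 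A computation identifies $\sgn c(r)=\sgn(\kappa-\kappa_c(r))$ with $\kappa_c(r)$ as in \eqref{kappac} and $\beta(r)$ as in \eqref{beta} entering through $\br{\abs{\phi_0}^4}$. Since $\sgn\tau=\sgn(\kappa-\sqrt{b_r})$, the sign condition in \eqref{1.5} is exactly what forces $s^2=\tau/c(r)>0$, giving a unique small root $s>0$; condition \eqref{b0cond} keeps $c(r)$ bounded away from zero, so the bifurcation is non-degenerate. Uniqueness up to gauge follows because $(\eta,\alpha)$ are uniquely determined by $\xi$, the reduced scalar equation has the unique positive root $s$, and the residual phase freedom in $t$ is precisely the gauge orbit.

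The main obstacle, I expect, is the spectral and mapping-property input on the \emph{non-compact} finite-volume surface $(\Si,h_r)$, rather than the (by now standard) bifurcation bookkeeping. One must show that $b_r$ is an isolated lowest eigenvalue of $-\Lap_{a^{b_r}}$ with a genuine spectral gap above it, that $L_\tau$ is Fredholm of index zero with the claimed uniform inverse on $K(r)^\perp$, and that $d^*d$ is invertible modulo harmonic $1$-forms with $N_2$ lying in its range --- all in a cusped geometry where the usual compactness of Sobolev embeddings can fail. Controlling these operators in $X^k$, and thereby justifying the implicit function theorem, is the analytic heart of the argument; once it is in place, the reduction and the sign analysis proceed as above.
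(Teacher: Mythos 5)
Your overall route --- Lyapunov--Schmidt reduction about the normal state, with the smallness in \eqref{1.5} as bifurcation parameter, a one-complex-dimensional reduced equation via \eqref{Kcond}, phase gauge-fixing, and the sign condition in \eqref{1.5} forcing $s^2>0$ --- is essentially the paper's (the paper first rescales the metric away and bifurcates in $r$ on $(\Si,h_1)$, which is only bookkeeping). But there is a genuine gap in your second step, in the gauge-field equation. On co-closed $1$-forms the operator $d^*d$ has kernel equal to the space $\Om$ of $L^2$ harmonic $1$-forms (\propref{prop3.1}), and $\Om\neq\{0\}$ in general --- e.g.\ the paper's standing example $\Si=\Hb/\Ga(6)$ in \eqref{1.6'} has genus $1$. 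You dismiss this kernel by asserting that $N_2=\Im(\bar\psi\grad_a\psi)$ ``lands in'' the complement of $\Om$. That is true only at leading order: $J(\xi,0)$ with $\xi\in K(r)$ is co-exact because $\xi$ is gauge-equivalent to a holomorphic section, but the higher-order parts of $N_2$ (cross terms with $\eta$, and the term $\abs{\xi}^2\al$) have no reason to be orthogonal to $\Om$; their projection $Q'N_2$ onto $\Om$ is $O(\abs{s}^4)$ and need not vanish. Hence $d^*d\al=N_2$ is not solvable with $\al\in\Om^\perp$ unless the $\dim\Om$ scalar constraints $Q'N_2=0$ hold, and your reduced system never enforces them: what your construction actually produces solves $d^*d\al=(1-Q')N_2$, not the second Ginzburg--Landau equation. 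The paper handles exactly this by enlarging the Lyapunov--Schmidt kernel to $N=K\times\Om$, parametrizing the harmonic component of $\al$ by $t\in\Rb^{\dim\Om}$, and solving the additional bifurcation equations \eqref{4.27} for $t=t(s,r)=O(\abs{s}^2)$; the key input is positive-definiteness of the matrix $B_{kl}=\inn{\eta_k}{\abs{\xi}^2\eta_l}_{\vec\cL^2}$ (see \eqref{Best} and \lemref{thmD.1}). The resulting harmonic component is in general nonzero, so it cannot simply be set to zero as your scheme implicitly does.

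A smaller but real error: you assign condition \eqref{b0cond}, $b\ne 1/2$, the role of keeping the reduced coefficient $c(r)$ away from zero. Its actual role is spectral: by \thmref{thm3.1}(d) the essential spectrum of $-\Lap_{a^{b}}$ is $[1/4+b^2,\infty)$, and $b=1/4+b^2$ exactly when $b=1/2$; thus \eqref{b0cond} is what makes the ground-state eigenvalue isolated, which is what licenses the Riesz projection and the uniform invertibility of the linearization on the complement --- the very point you flag as the ``analytic heart'' of the argument. The size and sign of $c(r)$ are instead governed by the second condition in \eqref{1.5}, through the identity $(\kappa^2-\tfrac12)\beta(r)+\tfrac12=\beta(r)\bigl(\kappa^2-\kappa_c^2(r)\bigr)$.
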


We remark that our results in Section 7 enable us to establish the first, existence part of Theorem \ref{thm1.2} without imposing the non-degeneracy condition \eqref{Kcond}.

{\thmref{thm1.2} follows from the following: }
\begin{theorem}[Parametrization and asymptotics]
	\label{thm1.1}
		{Let \eqref{b0cond} hold and assume $\abs{\kappa^2r -b}\ll1$.} Then there exists $\eps>0$ s.th.~\eqref{GL} with metric \eqref{hr} has a $C^2$ branch of solutions {$(\psi_s,a_s,r_s)$, $s\in\Cb$, $\abs{s}\le \eps$,} 
%
%
	satisfying{\begin{align}
		&\psi_s=s \xi+O_{\cH^k}(\abs{s}^3),\label{1.8}\\
		&a_s=a^{{b_{r_s}}}+\abs{s}^2\al+O_{\vec \cH^k}(\abs{s}^4),\label{1.9}
	\end{align}
	where 
	$\xi=O_{\cH^k}(1)$ is gauge-equivalent to a holomorphic section of $E$ 
	corresponding to $a^{{b_{r_s}}}$, 
	 $\al=O_{\vec \cH^k}(1)$ is a co-closed $1$-form} and  satisfies, with
	{$*$ denoting the Hodge operator,}
	\begin{align}
		&d\al=\frac{1}{2}*\del{1-\abs{\xi}^2}.\label{1.11}
	\end{align}

{Moreover, if \eqref{1.5} and \eqref{Kcond} hold,  then we can take $s\in \Rb_{\ge0}$, and the solution $(\psi_s,a_s,r_s)$ is unique,} and the equation $r=r_s$ can be solved for $s$ to obtain
\begin{align}
			&{s}^2= {\frac{\kappa^2-b_r}{(\kappa^2-\tfrac{1}{2})\beta(r)+\tfrac{1}{2}}+O((\kappa^2-b_r)^2)}.\label{s} 
\end{align}

Furthermore, writing \eqref{s} as $s=s(r)$ gives, for any $r>0$ as above, the solution\begin{align}\label{1.7}
	(\psi(r),a(r))=(\psi_{s(r)}  ,a_{s(r)})  ,
\end{align}  as in  \thmref{thm1.2} (see \eqref{1.7'}).

\end{theorem}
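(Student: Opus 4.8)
The plan is to obtain the branch as a bifurcation off the family $(0,a^{b_r})$ as $\kappa^2-b_r$ crosses $0$. First I would use the homothety \eqref{hr}, $h_r=r\,h_1$: since $da^{b_r}=b_r\om_r=b\,\om_1$ is independent of $r$, the reference connection $a^{b_r}=:a^b$ does not depend on $r$, and the covariant Laplacian for $h_r$ is $r^{-1}$ times that for $h_1$. Hence $K(r)=\Null(-\Lap_{a^b}-b_r)$ coincides, as a space of sections, with the fixed space $\Null(-\Lap_{a^b}-b)$; by the Bochner--Kodaira identity $-\Lap_{a^b}-b_r=2r^{-1}\bar\partial_{a^b}^{*}\bar\partial_{a^b}$ it consists of holomorphic sections, is finite-dimensional, and --- this is where \eqref{b0cond} enters --- for $b\ne1/2$ it is an isolated $L^2$-eigenspace strictly below the essential spectrum produced by the cusps (see \secref{secLap}), its elements decaying in the cusps. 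The linearisation of the first equation of \eqref{GL} at $(0,a^b)$ is $-\Lap_{a^b}-\kappa^2$, degenerate exactly at $\kappa^2=b_r$, i.e.\ $r=r_*:=b/\kappa^2$; the hypothesis $\abs{\kappa^2r-b}\ll1$ places us near this point, with bifurcation parameter $\mu:=\kappa^2-b_r$.

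Next I would carry out a Lyapunov--Schmidt reduction in the Coulomb gauge $d^{*}(a-a^b)=0$, so the connection perturbation lies in $\vec\cH^k$. Fix the minimiser $\xi$ of \eqref{beta}, normalised by $\br{\abs{\xi}^2}=1$, and write $\psi=s\xi+\psi^{\perp}$ with $\psi^{\perp}\in K(r)^{\perp}$. Using $\bar\partial_{a^b}\xi=0$ one has the two-dimensional identity $\Im(\bar\xi\grad_{a^b}\xi)=\tfrac12*d\abs{\xi}^2$, which is co-exact; since $d^{*}d$ is invertible on co-exact $1$-forms, solving the second equation of \eqref{GL} for $\al:=a-a^b$ gives $\al=\abs{s}^2\al_0+O_{\vec\cH^k}(\abs{s}^4)$ with $d^{*}d\al_0=\Im(\bar\xi\grad_{a^b}\xi)$, and the normalisation (which makes $*(1-\abs{\xi}^2)$ exact) turns this into exactly \eqref{1.11}. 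Finite volume guarantees $\al_0\in\vec\cH^k$ although $\abs{\xi}^2\to0$ in the cusps. Substituting $\al=\al(\psi)$ into the first equation and projecting onto $K(r)^{\perp}$, the operator $(-\Lap_{a^b}-\kappa^2)|_{K^{\perp}}$ is invertible near $r_*$ by the spectral gap, so the implicit function theorem gives $\psi^{\perp}=O_{\cH^k}(\abs{s}^3)$; this yields the $C^2$ branch with asymptotics \eqref{1.8}--\eqref{1.9}.

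The reduced bifurcation equation is the projection of the first equation of \eqref{GL} onto $\xi$. Its linear part contributes $(b_r-\kappa^2)\inn{\psi}{\xi}=-\mu\,s\,\abs{\Si}_r+O(\abs{s}^3)$, and at order $s^3$ two terms appear: the self-interaction $\kappa^2\!\int\abs{\xi}^4\om_r=\kappa^2\abs{\Si}_r\beta(r)$ and the magnetic back-reaction $\inn{L_{\al_0}\xi}{\xi}$, where $L_{\al}:=\partial_t(-\Lap_{a^b+t\al})|_{t=0}$. The key computation is the screening identity $\inn{L_{\al}\xi}{\xi}=-2\inn{\al}{\Im(\bar\xi\grad_{a^b}\xi)}$ (the sign being fixed by compatibility of \eqref{GL} with the energy \eqref{1.1}), which with $d^{*}d\al_0=\Im(\bar\xi\grad_{a^b}\xi)$ and \eqref{1.11} gives $\inn{L_{\al_0}\xi}{\xi}=-2\norm{d\al_0}^2=-\tfrac12\abs{\Si}_r(\beta(r)-1)$. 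Hence the cubic coefficient equals $\abs{\Si}_r[(\kappa^2-\tfrac12)\beta(r)+\tfrac12]$, and dividing by $s$ yields $\mu=[(\kappa^2-\tfrac12)\beta(r)+\tfrac12]\,s^2+O(s^4)$, which inverts to \eqref{s}. The global phase of $s$ is pure gauge, so fixing it gives $s\in\Rb_{\ge0}$; under \eqref{Kcond} the reduction onto the one-dimensional $K(r)$ is exact, yielding uniqueness up to \eqref{gaugetransf}, and \eqref{1.5} is precisely the positivity requirement $s^2>0$, since $(\kappa^2-\tfrac12)\beta(r)+\tfrac12>0\iff\kappa>\kappa_c(r)$ by \eqref{kappac} while $\mu>0\iff\kappa>\sqrt{b_r}$. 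Finally, solving $r=r_s$ by the implicit function theorem --- using $\partial_r(\kappa^2-b_r)=b/r^2\ne0$ --- produces $s=s(r)$ and the solution \eqref{1.7}, which is \thmref{thm1.2}.

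The main obstacle is the non-compact analysis underlying this reduction: on the cusped, finite-volume surface one must show that $b_r$ is an isolated eigenvalue below the essential spectrum with finite-dimensional holomorphic eigenspace (exactly where $b\ne1/2$ enters), control the decay of $\xi$ in the cusps, and establish the Fredholm and mapping properties of $d^{*}d$ on co-closed forms and of $-\Lap_{a^b}-\kappa^2$ on $\cH^k$ that the implicit function theorem requires; I would import these from \secref{secLap}. Granting them, the only genuinely delicate algebraic step is the screening identity responsible for the $-\tfrac12$ in the denominator of \eqref{s}. The existence of the branch when $\dim_\Cb K(r)>1$, i.e.\ without \eqref{Kcond}, is instead obtained by the variational argument of Section~7.
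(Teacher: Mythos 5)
Your strategy for the $\psi$-component tracks the paper closely: the rescaling to the fixed surface $(\Si,h_1)$, the use of the spectral gap of $-\Lap_{a^b}$ at the eigenvalue $b$ (which is where $b\ne\tfrac12$ enters, exactly as in the paper), and the Lyapunov--Schmidt reduction near $r=b/\kappa^2$. Your cubic-coefficient computation is also correct: the ``screening identity'' $\inn{L_{\al_0}\xi}{\xi}=-2\inn{\al_0}{J(\xi,0)}=-2\norm{d\al_0}_{\cL^2}^2=-\tfrac12\abs{\Si}(\beta-1)$ is precisely the paper's identity \eqref{5.16}, and it reproduces the denominator $(\kappa^2-\tfrac12)\beta(r)+\tfrac12$ in \eqref{s}, as does your reading of condition \eqref{1.5} as positivity of $s^2$.

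There is, however, a genuine gap in your treatment of the connection component. You propose to solve Amp\`ere's equation for a \emph{co-exact} $\al$ on the grounds that ``$d^*d$ is invertible on co-exact $1$-forms''. That invertibility is true, but it does not let you solve $d^*d\al=J(\psi,a^b+\al)$: on the space of co-closed $1$-forms (the space $\vec\cH^k$ in which the problem is posed), $d^*d$ has the non-trivial kernel $\Om$ of $L^2$-harmonic $1$-forms (\propref{prop3.1}), which is non-zero whenever $\Si$ has positive genus --- in particular for the paper's own example \eqref{1.6'}, $\Si=\Hb/\Ga(6)$ has genus one. With your purely co-exact ansatz, the projection of Amp\`ere's equation onto $\Om$ becomes a finite-dimensional constraint, $Q'J(\psi,a^b+\al)=0$, which your construction leaves unenforced: the leading part $Q'J(\xi,0)$ vanishes (the supercurrent of $\xi\in K$ is co-exact, as used in the proof of \propref{thm4.1}), but the contribution $Q'(\abs{\psi}^2\al)$ is generically non-zero, so the $\al(\psi)$ you build solves only the co-exact projection of the second GL equation, not the equation itself. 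If instead you allow $\al$ co-closed and try to invert $d^*d+\abs{\psi}^2$, the operator degenerates on $\Om$ like $\abs{s}^2B$ with $B_{kl}=\inn{\eta_k}{\abs{\xi}^2\eta_l}$, so there is no bound uniform in $s$ and the implicit function theorem cannot be applied naively. This is exactly why the paper enlarges the reduced space to $N=K\times\Om$, introduces coordinates $t$ for the harmonic directions, and solves the extra bifurcation equations \eqref{4.27} using the positive definiteness of $B$ (\lemref{thmD.1}, Step 1); the resulting harmonic component of $\al$ is $O(\abs{s}^2)$, the same order as your $\al_0$, so it cannot be assumed away. The repair does not change your final formulas: since harmonic forms are closed, $\inn{\g_h}{J(\xi,0)}=\inn{d\g_h}{d\al_0}=0$, so the harmonic part affects neither \eqref{1.11} (which constrains only $d\al$) nor the cubic coefficient in \eqref{s} --- but it must be solved for, not omitted.
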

{This theorem is proved in Sects. \ref{sec:4}--\ref{sec:5}.}

\begin{remark}\label{rem1.1}
{	Condition \eqref{1.5} guarantees the r.h.s. of \eqref{s}
	is positive, 
		and  was first isolated as a criterion
	for the existence of the Abrikosov vortex lattice in \cite{MR2560758,MR2904275,MR3123370,MR3758428}.} Through \eqref{b}, condition \eqref{1.5} gives rise to {
	the critical magnetic field $b_r=\kappa^2$.}
\end{remark}

\begin{remark}

The reason of condition \eqref{b0cond} is explained in \secref{sec:4}, {where we also show that an explicit bundle $E\to \Si$ satisfying condition \eqref{b0cond} is   
\begin{align}
	\label{1.6'}
	\Si=\Hb/\Ga(6),\quad \deg E=12.
\end{align}}
\end{remark}

\begin{remark}
	
		As we explain in Sections \ref{sec:2.3} and \ref{sec:2.5},
{		the number $\kappa^2r$ 
		corresponds to the average magnetic field in superconductors.
		Hence,
		condition \eqref{1.5} 
	gives an estimate of the neighbourhood of the critical average
	field strength we work in,} in terms of the topological degree $\deg E$,
	and the hyperbolic
	area of $\Si$ through the quantization relation \eqref{b}. 
	In this sense 	\eqref{1.5}  
	can be compared to Bradlow's condition
	for the existence of magnetic vortex on compact Riemann
	surfaces \cite{MR1086749}. 
\end{remark}

\begin{remark}\label{rem4}
{Similar results as ours 
have been obtained for compact Riemann surfaces in \cite{MR3790522,MR4049917,MR4213771,nagy2022nonminimal,MR1451329}.
It seems that ours is the first rigorous existence
theory for \eqref{GL} on non-compact Riemann surfaces. 
}
\end{remark}

{\begin{remark}
	It is possible to extend Thms.~\ref{thm1.2}--\ref{thm1.1} by dropping the second condition in \eqref{b0cond}, as we explain in \secref{secExt}.
\end{remark}}

{The proof of Theorem \ref{thm1.1} implies also the following proposition, whose proof can be found at the end of Section \ref{sec:3}:
\begin{proposition}
	\label{prop:loc-min}
{	The constant curvature solution  $(\psi\equiv 0, a =a^{b_r})$ (see \eqref{1.3}	) is a minimizer of the energy $\cE(\psi,a,h_r)$ if and only if
	\begin{equation}\label{b-norm}
		{b_r>\kappa^2. }
	\end{equation}}
\end{proposition}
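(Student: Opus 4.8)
The plan is to read off the nature of the critical point $(0,a^{b_r})$ from the second variation of $\cE(\cdot,\cdot,h_r)$ at that point. First I would note that $(0,a^{b_r})$ really is a solution: with $\psi\equiv0$ the supercurrent $\Im(\b\psi\grad_a\psi)$ vanishes, and since $da^{b_r}=b_r\om_r$ one has $d^*da^{b_r}=0$, so both equations in \eqref{GL} hold. Writing a test configuration as $(\psi,a)=(t\phi,a^{b_r}+t\al)$ with $\al\in\vec\H^k$ co-closed and expanding \eqref{1.1} in $t$, the linear term vanishes and the quadratic term splits into two decoupled pieces,
\[
\tfrac{d^2}{dt^2}\Big|_{t=0}\cE=\inn{(-\Lap_{a^{b_r}}-\kappa^2)\phi}{\phi}+\norm{d\al}^2 .
\]
Crucially, because $\psi$ vanishes at the base point the mixed $\phi$--$\al$ coupling coming from the kinetic term is of order $t^3$ and does not enter the Hessian, while the magnetic cross term $\inn{da^{b_r}}{d\al}=\inn{d^*da^{b_r}}{\al}$ vanishes identically.

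The magnetic block $\norm{d\al}^2\ge0$ is always nonnegative; its kernel is the finite-dimensional space of harmonic representatives, along which $\cE$ is in fact exactly constant (adding a harmonic form changes neither $da$ nor, with $\psi\equiv0$, any other term), so these flat directions are harmless for minimality. Hence the sign of the Hessian is governed entirely by the scalar block. Here I would invoke the fact recalled in \secref{secLap} that $b_r$ is the smallest eigenvalue of $-\Lap_{a^{b_r}}$, giving $\inn{(-\Lap_{a^{b_r}}-\kappa^2)\phi}{\phi}\ge(b_r-\kappa^2)\norm{\phi}^2$, with equality exactly on the ground-state space $K(r)$.

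The two strict cases then follow at once. If $b_r>\kappa^2$ the scalar block is coercive, the full Hessian is positive (vanishing only along the harmonic directions), and controlling the cubic and quartic remainders for small $(t\phi,t\al)$---the genuine quartic term $\tfrac{\kappa^2}{4}\abs{\psi}^4$ being favorable---shows that $(0,a^{b_r})$ is a local minimizer. If $b_r<\kappa^2$, taking $\phi\in K(r)$ makes the second variation strictly negative, so $\cE$ strictly decreases along $(t\phi,a^{b_r})$ and $(0,a^{b_r})$ is not a minimizer.

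The remaining, and main, obstacle is the borderline $b_r=\kappa^2$, where the Hessian is merely positive semidefinite with kernel $K(r)$ and minimality must be decided at fourth order. I would resolve this by a Lyapunov--Schmidt reduction along $K(r)$: fix $\phi\in K(r)$ with $\br{\abs{\phi}^2}=1$, solve the linearized magnetic equation $d^*d\al=\Im(\b\phi\grad_{a^{b_r}}\phi)$ for the co-closed correction $\al$ (equivalently $d\al=\tfrac12*(1-\abs{\phi}^2)$ as in \eqref{1.11}), and substitute $(\psi,a)=(t\phi,a^{b_r}+t^2\al)$ into \eqref{1.1}. The normalization $\br{\abs{\phi}^2}=1$ removes the $t^2$ magnetic cross term; the identity $\Im(\b\phi\grad_{a^{b_r}}\phi)=d^*d\al$ evaluates the kinetic cross term as $-\norm{d\al}^2$; and $\norm{d\al}^2=\tfrac14\del{\int\abs{\phi}^4-\abs{\Si}_r}$ reduces the quartic coefficient to
\[
\frac{\abs{\Si}_r}{8}\del{(2\kappa^2-1)\beta(r)+1},
\]
whose sign is precisely that of $\kappa-\kappa_c(r)$ by the definition \eqref{kappac}. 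The delicate point I expect to cost the most effort is thus to show that, at $b_r=\kappa^2$, this quartic term forces $(0,a^{b_r})$ to fail the (coercive) minimizer criterion, so that minimality holds exactly when $b_r$ lies strictly above $\kappa^2$; this is also where the role of condition \eqref{1.5}, which makes the bifurcating branch of \thmref{thm1.1} carry strictly lower energy on the relevant side, enters.
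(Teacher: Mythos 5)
Your core argument is exactly the paper's: the published proof consists of computing the Hessian of $\cE(\cdot,h_r)$ at $(0,a^{b_r})$, namely $L=\diag(-\Lap_{a^{b_r}}-\kappa^2,\,d^*d)$ as in \eqref{dG}, and invoking $-\Lap_{a^{b_r}}\ge b_r$ (\thmref{thm3.1}(b), with $b_r$ the attained bottom of the spectrum) together with $d^*d\ge 0$ (\propref{prop3.1}) to conclude $L\ge 0$ if and only if $b_r\ge\kappa^2$. Your decoupled second variation, the lower bound through the ground-state energy $b_r$ with equality on $K(r)$, and the observation that the harmonic directions are exactly flat reproduce this, and in fact with more care than the paper supplies: the paper stops at the sign of the Hessian and asserts the proposition follows, without controlling remainders in the $b_r>\kappa^2$ case or distinguishing strict from non-strict inequality.

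Where you go beyond the paper is the borderline $b_r=\kappa^2$, and there your plan has a sign problem. Your fourth-order computation is correct: along $(t\phi,a^{b_r}+t^2\al)$ with $\phi\in K(r)$, $\br{\abs{\phi}^2}=1$, the quartic coefficient is $\tfrac{\abs{\Si}_r}{8}\del{(2\kappa^2-1)\beta(r)+1}=\tfrac{\abs{\Si}_r}{4}\beta(r)\del{\kappa^2-\kappa_c(r)^2}$. But this works \emph{against} your stated goal: under \eqref{1.100}, i.e.\ $\kappa>\kappa_c(r)$, the coefficient is positive, so the energy \emph{increases} along your trial family at $b_r=\kappa^2$, and this computation cannot establish failure of minimality there. (Consistently with a supercritical bifurcation, one expects the trivial solution at the critical field to be a degenerate local minimum when $\kappa>\kappa_c(r)$, so the ``only if'' direction at the borderline is genuinely delicate.) This is as much a defect of the proposition's strict inequality as of your plan --- the paper's own proof, whose displayed criterion \eqref{balge} is the non-strict $b_r\ge\kappa^2$, glosses the borderline entirely --- but you should not present the quartic analysis as if it closed that case.
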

%

We make two conjectures concerning the energy of the solution constructed in \thmref{thm1.1}.
\begin{conjecture}
	\label{conj:loc-min}
{	Under the assumption of \thmref{thm1.1}, solution \eqref{1.7} is a local minimizer of the energy $\cE(\psi,a,h_r)$ if and only if
	\begin{equation}\label{b-bif}
		{b_r<\kappa^2.}
	\end{equation}}
\end{conjecture}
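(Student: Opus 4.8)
The plan is to prove Conjecture~\ref{conj:loc-min} by a second-variation (Hessian) analysis of $\cE(\cdot,\cdot,h_r)$ at the bifurcated solution \eqref{1.7}, fixing $r$ and exploiting the principle of exchange of stability at the bifurcation value $b_r=\kappa^2$. The starting point is the Hessian of $\cE$ at a solution $(\psi,a)$, acting on perturbations $(\phi,\al)$ with $\phi$ a section and $\al$ a $1$-form. Expanding $\cE(\psi+\phi,a+\al)$ and dropping the (vanishing) first variation, the quadratic form is
\begin{align*}
\delta^2\cE_{(\psi,a)}(\phi,\al) = \int_\Si \Big[ &\tfrac12\abs{\grad_a\phi}^2 + \tfrac12\abs{d\al}^2 + \tfrac12\abs{\al}^2\abs{\psi}^2 \\
&+ \Re\inn{\grad_a\psi}{i\al\phi} + \Re\inn{\grad_a\phi}{i\al\psi} \\
&+ \tfrac{\kappa^2}{2}\del{\abs{\psi}^2-1}\abs{\phi}^2 + \kappa^2\del{\Re(\bar\psi\phi)}^2\Big]\om_r .
\end{align*}
At the trivial solution $(\psi,a)=(0,a^{b_r})$ this decouples into $\tfrac12(-\Lap_{a^{b_r}}-\kappa^2)$ on $\phi$ and $\tfrac12\abs{d\al}^2\ge0$ on $\al$, which is positive definite iff $b_r>\kappa^2$ because $b_r$ is the bottom of the spectrum of $-\Lap_{a^{b_r}}$; this recovers \propref{prop:loc-min}. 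The content of the conjecture is that the stability lost by the trivial branch as $b_r$ decreases through $\kappa^2$ is transferred to the bifurcated branch.

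First I would fix the Coulomb gauge $d^*\al=0$ to remove the gauge zero modes and study $\delta^2\cE$ on $\vec\cH^1\times\cH^1$ transverse to the gauge orbit. Second, I would insert the asymptotics \eqref{1.8}--\eqref{1.11}, $\psi_s=s\xi+O_{\cH^k}(\abs{s}^3)$ and $a_s=a^{b_{r_s}}+\abs{s}^2\al+O_{\vec\cH^k}(\abs{s}^4)$, into the Hessian and expand in powers of $s$; by \eqref{s}, $s^2$ is comparable to $\kappa^2-b_r$, so the regime $b_r<\kappa^2$ is exactly where $s$ is real and the branch genuine. Third, I would run a Lyapunov--Schmidt reduction, splitting $\phi=c\,\xi+\phi^\perp$ with $\phi^\perp\perp K(r)$ and solving for $(\phi^\perp,\al)$ in terms of the critical amplitude $c$, using the spectral gap of $-\Lap_{a^{b_r}}$ above $b_r$ together with the positivity of $\tfrac12\abs{d\al}^2$ to invert $\delta^2\cE$ on the complement of the critical mode. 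This produces a one-dimensional reduced quadratic form in $c$ whose sign is decisive.

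The decisive computation is the sign of this reduced coefficient. Thinking of the energy along the branch as a Landau free energy $E(c)=\tfrac12 A(b_r-\kappa^2)c^2+\tfrac14 B c^4+o(c^4)$ in the amplitude $c=\abs{s}$, the quadratic coefficient $A=\norm{\xi}_{L^2}^2>0$ comes from $-\Lap_{a^{b_r}}-\kappa^2$ restricted to $K(r)$, while the quartic coefficient $B>0$ is generated by the self-interaction $\tfrac{\kappa^2}{4}\br{\abs{\xi}^4}$ together with the energy-lowering back-reaction of $\al$ through \eqref{1.11}; its positivity is precisely what condition \eqref{1.5} guarantees, via $\beta(r)>1$ and the denominator $(\kappa^2-\tfrac12)\beta(r)+\tfrac12$ appearing in \eqref{s}. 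The critical point sits at $c^2=A(\kappa^2-b_r)/B$, matching \eqref{s}, and there the reduced second derivative equals $2A(\kappa^2-b_r)$, positive exactly when $b_r<\kappa^2$ and negative when $b_r>\kappa^2$. This is the exchange-of-stability dichotomy that the conjecture asserts.

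The main obstacle --- and the reason this remains only a conjecture --- is controlling the \emph{full} infinite-dimensional Hessian rather than its finite-dimensional reduction, with estimates uniform as $s\to0$. The difficulty is concentrated in the gauge-field block: on co-closed $\al$ the form $\tfrac12\abs{d\al}^2$ annihilates the harmonic $1$-forms, so the only coercivity there is $\tfrac12\abs{\al}^2\abs{\psi_s}^2=O(s^2)$, while the cross terms $\Re\inn{\grad_{a_s}\psi_s}{i\al\phi}+\Re\inn{\grad_{a_s}\phi}{i\al\psi_s}$ are $O(s)$. Forming the Schur complement to eliminate $\al$ therefore yields an $O(s)^2/O(s^2)=O(1)$ back-reaction onto the $\phi$-block --- exactly the contribution feeding the quartic coefficient $B$ above --- and making this elimination rigorous, with errors controlled by \eqref{1.8}--\eqref{1.9} uniformly in small $s$, is the delicate step. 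A complete proof would combine (i) a coercivity estimate $\delta^2\cE_{(\psi_s,a_s)}(\phi,\al)\gtrsim\norm{(\phi,\al)}_{X^1}^2$ modulo gauge on the side $b_r<\kappa^2$, with (ii) the exhibition of a strictly negative direction when $b_r>\kappa^2$, namely the critical amplitude direction along which the reduced coefficient $2A(\kappa^2-b_r)$ changes sign. Upgrading the finite-dimensional sign computation to such a uniform coercivity/defect statement on all of $X^1$ is what would turn the conjecture into a theorem.
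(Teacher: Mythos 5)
The statement you were assigned is Conjecture~\ref{conj:loc-min}: the paper does \emph{not} prove it, and says only that it expects a proof patterned on the stability analysis of Abrikosov lattices in \cite[Thm.~4]{MR2904275}. So there is no proof in the paper to compare yours against; the only question is whether your proposal closes the gap, and by your own admission it does not. What you have is the right \emph{strategy} (and essentially the one the authors anticipate): your second-variation formula is correct up to sign conventions, your computation at the trivial branch reproduces \propref{prop:loc-min}, and your reduced Landau picture --- quadratic coefficient proportional to $b_r-\kappa^2$, critical amplitude matching \eqref{s}, reduced second derivative $2A(\kappa^2-b_r)$ at the bifurcated critical point --- is the standard pitchfork exchange-of-stability dichotomy. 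But steps (i)/(ii) of your final paragraph are not a remaining technicality; they \emph{are} the open problem, namely uniform-in-$s$ coercivity (modulo gauge) of the full Hessian on $X^1$, which is exactly why this remains a conjecture.

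Two further points. First, the analytic difficulty is somewhat worse than your sketch indicates. Beyond the $O(s)/O(s^2)$ Schur-complement issue you correctly isolate in the gauge block (where $\abs{d\al}^2$ vanishes identically on the finite-dimensional space $\Om$ of harmonic $1$-forms, cf.~\propref{prop3.1}), the surface is non-compact: $-\Lap_{a^{b}}$ has essential spectrum $[1/4+b^2,\infty)$ by \thmref{thm3.1}(d), and the spectral gap separating the bifurcating eigenvalue $b$ from the essential spectrum is $(b-\tfrac12)^2$, which closes at $b=1/2$; any coercivity estimate must be uniform relative to this gap, which is why the paper excludes $b=1/2$ even for existence. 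One must also handle the gauge zero modes generated by \eqref{gaugetransf} at the nontrivial solution, which are no longer the trivial ones of the normal state. Second, a genuine error in your sketch: condition \eqref{1.5} does \emph{not} guarantee your quartic coefficient $B>0$. In the regime $\kappa<\kappa_c(r)$ the denominator $(\kappa^2-\tfrac12)\beta(r)+\tfrac12=\beta(r)(\kappa^2-\kappa_c^2(r))$ in \eqref{s} is negative, and the branch then exists precisely for $b_r>\kappa^2$; the pitchfork is subcritical and $B<0$. Your final sign formula $2A(\kappa^2-b_r)$ happens to remain valid in both cases (so the heuristic conclusion, instability for $b_r>\kappa^2$ and stability for $b_r<\kappa^2$, is unaffected), but the stated justification for the positivity of $B$ is wrong and should be removed. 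In sum: a sound outline, consistent with the route the paper itself proposes via \cite{MR2904275}, but an outline --- not a proof.
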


We expect that  conjecture  above can be proven similarly to the corresponding result for the original  Ginzburg-Landau equations proven in \cite[Thm. 4]{MR2904275}.

 The significance of such a result is that it would show that by decreasing the curvature $b$, one passes from the (dynamically) stable constant curvature (normal) solution \eqref{1.3} to  the (dynamically) stable variable curvature solution \eqref{1.7}. 
 
 In fact, we expect stronger statements to be true:
 
\begin{conjecture}
	\label{conj:glob-min}
{The constant curvature solution  $(\psi\equiv 0, a =a ^{b_r})$ (see \eqref{1.3}	) is a global minimizer of the energy $\cE(\psi,a,h_r)$ if \eqref{b-norm} holds. 
	If \eqref{b-bif} holds, then solution \eqref{1.7} is a global minimizer of the energy $\cE(\psi,a,h_r)$.}
\end{conjecture}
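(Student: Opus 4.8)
The plan is to compare the energy of an arbitrary configuration $(\psi,a)$ with that of the normal state \eqref{1.3} by exploiting the topological rigidity of the flux. Write $a=a^{b_r}+\al$, set $B:=*da$ and $\tilde B:=*d\al$, so that $B=b_r+\tilde B$; since both $da$ and $da^{b_r}$ represent $2\pi\deg E$ by the Chern--Weil correspondence, $\int_\Si\tilde B\,\om_r=0$. Using the Bochner--Kodaira/Weitzenb\"ock factorization on the line bundle (whose lowest term is the eigenvalue $b_r$ of $-\Lap_{a^{b_r}}$, see \secref{secLap}),
\begin{equation*}
	\tfrac12\int_\Si\abs{\grad_a\psi}^2\,\om_r=\norm{\bar\partial_a\psi}^2+\tfrac12\int_\Si B\abs{\psi}^2\,\om_r,
\end{equation*}
expanding the potential in \eqref{1.1} and completing the square in $\tilde B$ yields, with $\cE_0:=\cE(0,a^{b_r},h_r)$ the energy of \eqref{1.3}, the exact identity
\begin{equation}\label{eq:energy-split}
	\cE(\psi,a,h_r)-\cE_0=\tfrac12\int_\Si\del{\tilde B+\tfrac12\abs{\psi}^2}^2\om_r+\norm{\bar\partial_a\psi}^2+\tfrac12\del{b_r-\kappa^2}\int_\Si\abs{\psi}^2\,\om_r+\tfrac{2\kappa^2-1}{8}\int_\Si\abs{\psi}^4\,\om_r.
\end{equation}
This single identity drives both claims, and it clarifies condition \eqref{b0cond}: at the self-dual coupling $\kappa^2=b_r=1/2$ the right side of \eqref{eq:energy-split} collapses to a sum of Bogomolny squares, for which the dichotomy degenerates.

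For the first claim, assume \eqref{b-norm}. A truncation argument --- replacing $\psi$ by $\psi\min(1,\abs{\psi}^{-1})$ does not increase \eqref{1.1} --- lets me restrict to $\abs{\psi}\le1$, whence $\abs{\psi}^4\le\abs{\psi}^2$. When $2\kappa^2\ge1$ every term of \eqref{eq:energy-split} is nonnegative, so $\cE\ge\cE_0$ with equality only at \eqref{1.3}. When $2\kappa^2<1$ I bound the now-negative quartic term by the quadratic one, reducing the claim to nonnegativity of $\tfrac12(b_r-\kappa^2)+\tfrac{2\kappa^2-1}{8}$; this is automatic once $2\kappa^2\ge1$, but in the type-I window $2\kappa^2<1$ it amounts to the strictly stronger requirement $b_r\ge(2\kappa^2+1)/4$. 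Closing this type-I gap, where the pointwise bound is lossy and the transition is first order, is the principal obstacle for the first claim; I expect it to require a sharper constrained spectral bound for $-\Lap_a$ at fixed flux, or a symmetric-decreasing rearrangement.

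For the second claim, assume \eqref{b-bif}, so the quadratic term in \eqref{eq:energy-split} is negative and, by \corref{cor1.2}, $\cE_0$ is not the infimum. First I would prove existence of a global minimizer on the fixed topological sector by the direct method: coercivity is immediate from the form of \eqref{1.1}, and loss of compactness into the cusps of $\Hb/\Ga(N)$ is excluded by a concentration--compactness argument using finiteness of $\abs{\Si}_r$ and the penalization $\tfrac{\kappa^2}{4}(\abs{\psi}^2-1)^2$ near the ends. Next, in the bifurcation window $0<\kappa^2-b_r\ll1$ of \eqref{1.5}, identity \eqref{eq:energy-split} together with $\cE\le\cE_0$ forces the a priori smallness $\int_\Si\abs{\psi}^2\,\om_r=O(\kappa^2-b_r)$ and $\norm{\al}=O(\kappa^2-b_r)$; elliptic bootstrapping then places any minimizer in the neighbourhood $U\subset X^k$ of \thmref{thm1.2}, whose uniqueness statement identifies it, up to gauge, with the branch solution \eqref{1.7}.

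The obstacles are twofold. The concentration--compactness step at the cusps is the key analytic difficulty: one must rule out a minimizing sequence splitting off nontrivial mass escaping to the finite-volume ends, which on $\Hb/\Ga(N)$ needs control of $\psi$ and the curvature along cuspidal neighbourhoods rather than the usual $\Rb^2$ vanishing lemma. The second and more serious limitation is that solution \eqref{1.7} is constructed only near the threshold $b_r=\kappa^2$, so the identification is confined to the window \eqref{1.5}; showing that \eqref{1.7} remains the global minimizer for \emph{all} $b_r<\kappa^2$ (and likewise the first claim throughout the type-I regime) would demand a global, non-perturbative uniqueness theory for \eqref{GL} beyond the local bifurcation analysis of Sections \ref{sec:4}--\ref{sec:5}, which is presumably why the statement is posed as a conjecture.
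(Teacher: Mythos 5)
The paper offers no proof of this statement: it is posed as Conjecture~\ref{conj:glob-min}, and the paper establishes only the local analogue, \propref{prop:loc-min} (via positivity of the Hessian \eqref{dG}), together with the comparative expansion of \corref{cor1.2}. So there is no paper proof to match your argument against; it must be judged as partial progress on an open problem. On that footing, your Bogomolny-type splitting is correct: it follows from the Weitzenb\"ock formula \eqref{Lapl-formula} (up to the normalization of $\norm{\bar\partial_a\psi}^2$) combined with the Chern--Weil constraint \eqref{2.19}, which kills the cross term $b_r\int_\Si \tilde B\,\om_r$, and the coefficients $\tfrac12(b_r-\kappa^2)$ and $\tfrac{2\kappa^2-1}{8}$ are right. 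It already yields an honest theorem strictly stronger than \propref{prop:loc-min}: for $\kappa\ge 1/\sqrt{2}$ and $b_r>\kappa^2$ every term is nonnegative (no truncation is even needed there), so the normal state is a global minimizer, with equality forcing $\psi\equiv0$ and $d\al=0$, hence gauge equivalence to $a^{b_r}$ by \thmref{thmB.2}(c). Your self-duality remark at $\kappa^2=b_r=1/2$ is apt, though note the paper imposes \eqref{b0cond} for a different, spectral reason: at $b=1/2$ the eigenvalue $b$ is embedded at the bottom of the essential spectrum (\remref{remEmb}).

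The genuine gaps are these. (i) In the type-I window $\kappa<1/\sqrt{2}$, $\kappa^2<b_r<(2\kappa^2+1)/4$, the failure of your pointwise bound is likely not merely technical: there the transition is expected to be first order and the normal state should lose global minimality at a field strictly above $\kappa^2$, so the first claim may be false as stated in that regime; rearrangement cannot rescue a false inequality, and a complete treatment must either restrict to $\kappa\ge1/\sqrt{2}$ or re-examine the conjecture for type I. (ii) Your a~priori smallness step for the second claim silently reuses $2\kappa^2-1>0$: for $\kappa\le 1/\sqrt{2}$ both the quadratic and quartic terms in your identity are nonpositive, and $\cE\le\cE_0$ gives no bound on $\norm{\psi}_{\cL^4}$; note that \eqref{1.100} does not imply $\kappa>1/\sqrt{2}$, since $\kappa_c(r)<1/\sqrt{2}$ by \eqref{kappac} and $\beta(r)>1$. (iii) Coercivity is ``immediate'' only after gauge fixing, and your bound on $\norm{d\al}_{\cL^2}$ does not see the harmonic (Aharonov--Bohm) component of $\al$: the space $\Om$ of \eqref{3.5} lies in the kernel of the quadratic form, so localizing a putative global minimizer into the neighbourhood $U$ requires controlling the harmonic coordinates modulo large gauge transformations, which your $\cL^2$ estimates do not do. (iv) The concentration--compactness step at the cusps is indeed open, and genuinely delicate since there is no compact embedding: $\si_{\rm ess}(-\Lap_{a^b})=[1/4+b^2,\infty)$ by \thmref{thm3.1}(d). (v) Finally, the identification with \eqref{1.7} invokes the uniqueness of \thmref{thm1.2}, hence additionally requires \eqref{b0cond}, \eqref{Kcond}, and confines the second claim to the window \eqref{1.5} --- consistent with the conjecture, since \eqref{1.7} is only defined there, but it should be said explicitly.
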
}

The energy associated to the constant curvature solution \eqref{1.3} is 
\begin{align}\label{GNE}
	\cE(0,a^{b_r},h_r)=\frac{1}{2}\del{{b_r^2}+\frac{\kappa^2}{2}}\abs{\Si}_r.
\end{align}
As a corollary of \thmref{thm1.1}, we obtain the following energy {expansion}:

\begin{corollary}\label{cor1.2}
	{
		Let conditions \eqref{b0cond}, \eqref{1.5}, \eqref{Kcond} hold as in \thmref{thm1.1}.}
	Then, for the solution $(\psi_{s(r)}, a_{s(r)})$ constructed in \eqref{1.7}, we have, 
	\begin{equation}
		\label{1.12}
		{{\cE(\psi_{s(r)},a_{s(r)},h_r)=\cE(0,a^{b_r},h_r)}{}}-{\frac{\abs{\Si}_r}4 \frac{\abs{\kappa^2-{b_r}}^2}{(\kappa^2-\tfrac{1}{2})\beta(r)+\tfrac{1}{2}}+O \del{\abs{\kappa^2-{b_r}}^3} }.
	\end{equation}


\end{corollary}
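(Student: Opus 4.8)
The plan is to substitute the asymptotic expansions \eqref{1.8}--\eqref{1.9} of \thmref{thm1.1} into the energy \eqref{1.1} and to collect terms by powers of $t:=\abs{s}^2=s^2$ (recall we may take $s\in\Rb_{\ge0}$). Throughout I write $\psi_s=s\xi+w$ with $w=O_{\cH^k}(\abs{s}^3)$, where, by the Lyapunov--Schmidt construction of Sections~\ref{sec:4}--\ref{sec:5}, $s$ is the component of $\psi_s$ along the one-dimensional kernel, so that $\inn{\xi}{w}=0$. I also use the normalizations $\br{\abs{\xi}^2}=1$ and, since \eqref{Kcond} forces $\xi$ to be the minimizer in \eqref{beta}, $\br{\abs{\xi}^4}=\beta(r)$. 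From $\inn{\xi}{w}=0$ together with these normalizations one gets $\int\abs{\psi_s}^2\,\om_r=t\abs{\Si}_r+O(t^3)$ and $\int\abs{\psi_s}^4\,\om_r=t^2\beta(r)\abs{\Si}_r+O(t^3)$.

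The key device, which avoids having to compute $w$ explicitly or to track the cross terms between $\al$ and $\grad_{a^{b_r}}\xi$ (both of which would \emph{a priori} enter at order $t^2$), is to rewrite the kinetic term using the first equation in \eqref{GL}. Since $(\psi_s,a_s)$ solves \eqref{GL}, integration by parts (legitimate because the fields lie in $X^k$, $k\ge2$, and decay at the cusps so there is no boundary contribution) gives
\begin{equation*}
	\tfrac12\int\abs{\grad_{a_s}\psi_s}^2\,\om_r=\tfrac12\inn{\psi_s}{-\Lap_{a_s}\psi_s}=\tfrac{\kappa^2}2\int\del{\abs{\psi_s}^2-\abs{\psi_s}^4}\,\om_r=\tfrac{\kappa^2}2\del{t-t^2\beta(r)}\abs{\Si}_r+O(t^3).
\end{equation*}
For the field term, \eqref{1.4}, \eqref{1.11}, and $*(1-\abs{\xi}^2)=(1-\abs{\xi}^2)\om_r$ give $da_s=\del{b_r+\tfrac t2(1-\abs{\xi}^2)}\om_r+O(t^2)$; since exact two-forms integrate to zero on $\Si$ (equivalently $\int_\Si da_s=2\pi\deg E=b_r\abs{\Si}_r$ is topologically fixed), both the order-$t$ term and the cross term with the $O(t^2)$ correction to $a_s$ drop, and with $\int(1-\abs{\xi}^2)^2\,\om_r=(\beta(r)-1)\abs{\Si}_r$ one obtains $\tfrac12\int\abs{da_s}^2\,\om_r=\tfrac12 b_r^2\abs{\Si}_r+\tfrac{t^2}8(\beta(r)-1)\abs{\Si}_r+O(t^3)$. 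The potential term expands directly to $\tfrac{\kappa^2}4\int(\abs{\psi_s}^2-1)^2\,\om_r=\tfrac{\kappa^2}4\abs{\Si}_r-\tfrac{\kappa^2}2 t\abs{\Si}_r+\tfrac{\kappa^2}4 t^2\beta(r)\abs{\Si}_r+O(t^3)$.

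Adding the three contributions, the $t^0$ terms reassemble into $\cE(0,a^{b_r},h_r)$ (see \eqref{GNE}), the $t^1$ terms cancel, and the $t^2$ terms combine into $-\tfrac14\sbr{(\kappa^2-\tfrac12)\beta(r)+\tfrac12}\,t^2\abs{\Si}_r$, giving
\begin{equation*}
	\cE(\psi_s,a_s,h_r)=\cE(0,a^{b_r},h_r)-\tfrac14\sbr{(\kappa^2-\tfrac12)\beta(r)+\tfrac12}t^2\abs{\Si}_r+O(t^3).
\end{equation*}
Finally I would insert $t=s(r)^2$ from \eqref{s}, i.e. $t^2=\del{\kappa^2-b_r}^2\sbr{(\kappa^2-\tfrac12)\beta(r)+\tfrac12}^{-2}+O(\abs{\kappa^2-b_r}^3)$, which cancels one factor of the bracket and yields \eqref{1.12}; note $O(t^3)=O(\abs{\kappa^2-b_r}^3)$ since $t\sim\kappa^2-b_r$.

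The main obstacle is precisely the bookkeeping of error terms: one must be certain that the $O(\abs{s}^3)$ correction $w$ and the $O(\abs{s}^4)$ correction to $a_s$ contribute to $\cE$ only at order $t^3$, so that the $t^2$ coefficient is governed purely by $\beta(r)$. The equation-of-motion rewriting of the kinetic term, the orthogonality $\inn{\xi}{w}=0$, and the topological vanishing of exact-form integrals are exactly the three ingredients that guarantee this, and verifying each of them in the non-compact $X^k$ setting (in particular the absence of boundary terms at the cusps) is where the care is needed.
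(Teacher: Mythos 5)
Your proposal is correct and is essentially the paper's own proof: both use the first equation in \eqref{GL} (pairing with $\psi_s$ and integrating by parts) to reduce the energy to constants plus $\tfrac12\norm{da_s}_{\cL^2}^2-\tfrac{\kappa^2}{4}\norm{\psi_s}_{\cL^4}^4$, expand to order $\abs{s}^4$ using $\br{\abs{\xi}^4}=\beta(r)$ together with \eqref{1.11}, and then insert expansion \eqref{s} for $s(r)^2$. The differences are cosmetic: the paper performs the computation in the rescaled variables of \eqref{2.12}--\eqref{2.13} and cites \cite[Prop.~5.5]{MR4049917} for the value of $\norm{d\eta}_{\cL^2}^2$ that you instead read off directly from \eqref{1.11}, and the orthogonality $\inn{\xi}{w}=0$ you invoke is in fact unnecessary, since after the equation-of-motion substitution the $\int\abs{\psi_s}^2$ contributions cancel identically.
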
 
This corollary is proved at the end of \secref{sec:5}.

{\begin{remark}
			{Since $(\kappa^2-\tfrac{1}{2})\beta(r)+\tfrac{1}{2}=\beta(r)(\kappa^2-\kappa_{c}^2(r))$, expansion \eqref{1.12} ensures that}
	$\cE(\psi_{s(r)},a_{s(r)},h_r)<\cE(0,a^{b_r},h_r)$, provided  	\begin{equation}\label{1.100}
		\kappa>\kappa_{c}(r).
	\end{equation} 
	Therefore, if \eqref{1.100} holds, then
	the solutions constructed in \thmref{thm1.1} are energetically favourable compared to the constant curvature one.

	Moreover, \corref{cor1.2} shows that there is an energy crossover between the trivial and nontrivial solutions at $\kappa_c(r)=\kappa$. 
	Thus, {at $\kappa=\kappa_c (r)$}, the gauge-translational symmetry (ensuring that $a$ has a constant curvature) is broken and a non-gauge-translational invariant ``ground state'' emerges at this point.
	
	Finally,  the function $\beta(r)\equiv \beta(r,\Si,E)$ yields the asymptotics of the GL energy of bundles over Riemann surfaces. 
\end{remark}}

\begin{remark}\label{remEquiv}
	{As was already indicated above,  in physics, $\psi$ is called, depending on the area,  either the order parameter or the Higgs field. In our context, it is represented by    the following two equivalent objects:}
\begin{enumerate}
	\item (Geometry) Sections of the unitary line bundle $E$
	with $n=\deg E\ne0$  over the surface $(\Si,h\equiv h_r)$ (as in this section);
	\item (Number theory) $\Ga$-automorphic functions  with weight $k=4\pi n/\abs{\Si}$
	and trivial multiplier system  {(see Appendix \ref{sec:2.4} and \cite{MR750670})}.
\end{enumerate}
Similar parallel can be drawn for the constant curvature connections
on $E$, weighted Maass operators on $\Si$, and gauge potentials with
constant magnetic fields with
strength $b$. {See \secref{sec:2.5} for details.}
\end{remark}

 \begin{remark}
 	Conceptually, {if one drops the second part in condition  \eqref{b0cond},} it is useful to introduce also the \textit{extended} Abrikosov function
 	$$\fullfunction{\beta}{\Null(-\Lap_{a^{b_r}}-b_r)\times \Rb_{>0}\times \Set{\text{Fuchsian groups}}}{\Rb_{\ge0}}{(\xi,r,\Ga)}{\norm{\xi}_{\cL^4}^4/\norm{\xi}_{\cL^2}^2}.$$
 	Then we have 
 	$\beta(r,\Ga)=\inf\Set{\beta(\xi,r,\Ga):\xi\in\Null(-\Lap_{a^{b_r}}-b_r)}$, c.f.~\eqref{beta}.
 	
 \end{remark}

\subsection{Organization {of the paper}}
{In \secref{sec:2}, after giving some preliminary definitions, we show that \eqref{GL} on $(\Si,h_r)$ with $h_r$ as in \eqref{hr} is equivalent to the rescaled Ginzburg-Landau equations, \eqref{2.13}, posed on $(\Si,h\equiv h_1)$.
Then the proof of \thmref{thm1.1} consists of two parts of analysis on the rescaled equation \eqref{2.13}. }

First, in \secref{sec:3}, 
we study the linearized problem associated to \eqref{GL},
which reduces to understanding  the spectral properties 
of the  Laplacian {$-\Lap_{a^b}$}
associated to a constant curvature connection
$a^b$, viewed as an operator acting  on square-integrable sections of the unitary line bundle $E\to \Si$. 
We show that the essential spectrum of $-\Lap_{a^b}$ {{is given by}
the half-line $[\tfrac{1}{4}+b^2,\infty)$,}
 and 
the lowest eigenvalue of this operator equals to $b$
whenever the space of cusp forms on $\Si$ is non-trivial, in which case we give {explicit description 
of $\Null(-\Lap_{a^b}-b)$.}
For precise statements, see \thmref{thm3.1}.

The operator $-\Lap_{a^b}$ is known in the physics literature as the magnetic 
Laplacian at constant field strength $b$, and 
is studied in e.g. \cite{MR776146,MR870891,MR1069475,Pnueli}.
For $b=0$, $-\Lap_{a^b}$ reduces to the Laplace-Beltrami operator
acting on the Poincar\'e half-plane, whose spectral properties are well
studied in \cite{MR1942691}.
For $b\ne0$, {eigenfunctions of $-\Lap_{a^b}$ are precisely } the weighted Maass forms in number theory. See e.g. \cite[{Sec. 2}]{MR1431508},
\cite{MR750670},
and the references therein.

Next, in \secref{sec:4}, we use Lyapunov-Schmidt reduction 
to show that a non-trivial branch of solution
of the form \eqref{1.8}--\eqref{1.9}
bifurcates from the constant curvature solutions 
\eqref{1.3}, provided the metric on $\Si$ 
satisfies  condition \eqref{1.5}. 
In \secref{sec:D}, we solve the key bifurcation equation \eqref{4.19}, which, by results from the previous section, amounts to solving \eqref{GL}. 
In \secref{sec:5}, we derive precise asymptotics for the solutions constructed in Sects. \ref{sec:4}--\ref{sec:D}. This proves \thmref{thm1.1} and \corref{cor1.2}.

{Lastly, in \secref{secExt}, we explain how to drop the non-degenerate condition in \eqref{b0cond} and extend the main results above to $\dim K>1$.}

\section{Preliminaries}\label{sec:2}
In this section, we explain the geometric setting
for the results and proofs in  this paper. 

In the {remainder} of this paper, the following geometric assumptions are 
always understood:
\begin{enumerate}
	\item The underlying surface $\Si$ is of the 
	form \eqref{2.1},  with finite area, $g$ genus, 
	$m$ cusp, and no elliptic points (e.g.
	the principal congruence subgroup $\Si=\Hb/\Ga(N)$ with $N\ge2$, defined in  \eqref{2.4} below);
	\item {$b>0$ in \eqref{b} (which can always be achieved by changing orientation so that $\deg E=1,2,\ldots$).} 

\end{enumerate}

\subsection{Non-compact Riemann Surfaces}\label{sec:2.1}
Let $\Si$ be a connected Riemann surface. The Uniformization Theorem states
that if $\Si$ is non-compact and not flat, then 
\begin{equation}\label{2.1}
	\Si\cong \Hb/\Ga,
\end{equation}
where $\Hb$ is the 
Poincar\'e half-plane, 
$$\Hb:=\Set{z:z\in\Cb,\,\Im z>0},$$ and $\Ga$ is a Fuchsian group,
i.e. a discrete subgroup of 
$$PSL(2,\Rb):=SL(2,\Rb)/\Set{\pm \mathbf{1}}$$ 
acting freely on $\Hb$.
Here, the action of $SL(2,\Rb)$ on $\Hb$ is by M\"obius transform,
\begin{equation}
	\label{2.2}
	\gamma z=\frac{az+b}{cz+d}\quad \del{\gamma=\begin{pmatrix}a&b\\c&d\end{pmatrix}\in SL(2,\Rb)}.
\end{equation}
By convention, we define $\g\infty=a/c$.

An important class of examples are the Riemann surfaces 
\begin{equation}\label{2.3}
	\Si:=\Hb/\Ga(N), \quad N=1,2,\ldots,
\end{equation}
where $\Ga(N)$ is the principal congruence subgroup of level $N$,
\begin{equation}
	\label{2.4}
	\Ga(N):=\Set{\gamma=\begin{pmatrix}a&b\\c&d\end{pmatrix}\in SL(2,\Zb):a\equiv d\equiv 1, b\equiv c \equiv 0 \mod N}.
\end{equation}
By definition, $\Ga(N)$ is a normal subgroup of the modular group $SL(2,\Zb)$ for each $N$.

\begin{definition}[cusp]\label{defn:cusp}
	Let $\Ga$ be a Fuchsian group.
A point $c\in\Rb\cup\Set{\infty}$ is called a \textit{cusp}
of $\Ga$ if and only if 
there is an element $\g\in \Ga$ that is conjugate-equivalent
to some horizontal translation $z\mapsto z+h,\,h\in \Rb$, s.th. $\g c=c$.
\end{definition}

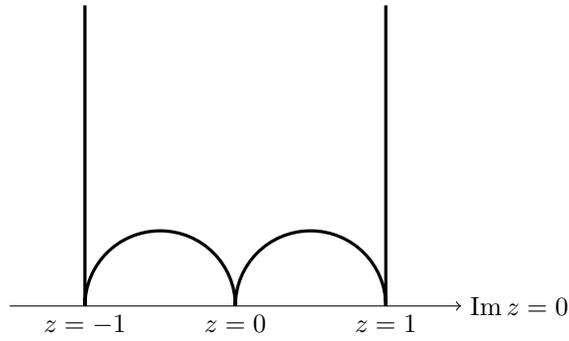
\begin{figure}[h]
	\centering
	\begin{tikzpicture}[scale=2]
		\draw [->] (-1.5,0)--(1.5,0);
		\node [right] at (1.5,0) {$\Im z=0$};
		
		\draw[black,very thick] (-1,2)--(-1,0) to [out=90,in=180] (-0.5,.5) to [out=0,in=90](0,0) to [out=90,in=180] (0.5,.5) to [out=0,in=90](1,0)--(1,2);
		
		\node [below] at (0,0) {$z=0$};
		\node [below] at (-1,0) {$z=-1$};
		\node [below] at (1,0) {$z=1$};
	\end{tikzpicture}
	\caption{A fundamental domain of $\Gamma(2)$ in $\mathbb{H}$  with three cusps $c_1=1,c_2=0,c_3=\infty$. }
	\label{fig:fdofga2}
\end{figure}

For example, if $\Ga=SL(2,\Zb)$, then the only cusp of $\Ga$ is $c=\infty$,
as every integral translation $z\mapsto z+n,\,n\in\Zb$ fixes $c$. 
If $\Ga=\Ga(2)$, then there are three cusps with $c_1=0,c_2=1, c_3=\infty$. See \figref{fig:fdofga2}.  (Note that $-1$ is equivalent to $c_2$ through translation $z\mapsto z+2$.)

Denote by $m$ the number of distinct cusps and {$g$ the number of genera of $\Ga$.}
For  the principal congruence subgroup $\Ga=\Ga(N)$  defined in  \eqref{2.4},
the classical results from \cite[{Sec. 1.6}]{MR0314766}
show that 
\begin{align}
	\label{2.5}
	m_N=&\left\{\begin{aligned}
	&3\quad (N=2),\\
	&\frac 1 2 N^2 \prod_{p \mid N}\left(1 - \frac 1 {p^2}\right)\quad 
	(N\ge 3),
\end{aligned}\right.\\
\label{2.6}
g_N=&1+\frac{N-6}{12}m_N.
\end{align}

\subsection{Metric and Rescaling}\label{sec:2.3}
Let $r>0$.
As in \secref{sec:1},
we equip the space $\Hb$ with the following
families of {hyperbolic} metrics and induced area $2$-forms:
\begin{align}
	\label{2.9}
	h_r &= \frac{r}{(\Im z)^2} dz \otimes d\bar{z},\\
	\label{2.10}
	\om_r&=\frac{ri}{2\del{\Im z}^2}dz\wedge d\bar z.
\end{align}
These carry over to Riemann surfaces of the form $\Si=\Hb/\Ga$.
The surface $(\Si,h_r)$ has constant curvature $-1/r$
and surface area $\abs{\Si}_r=r \abs{\Si}$,
where $\abs{\Si}$ denotes the area of $\Si$ w.r.t. the standard hyperbolic $\om\equiv \om_r\vert_{r=1}$.
 
{Suppose $(\Si,h\equiv h_1)$ has finite area,   $g$ genus,   $m$ cusps,  and no elliptic points. Then 
Gauss-Bonnet formula gives}
\begin{equation}
	\label{2.8}
	\abs{\Si}=2\pi(2g-2+m),
\end{equation}
See e.g. \cite[{p.43, Eqn. (2.7)}]{MR1942691}.
For example, if   $\Si_N=\Hb/\Ga(N)$ with $N\ge2$, then $\Si_N$ has no elliptic point. Hence, by \eqref{2.5}--\eqref{2.6} and Gauss-Bonnet formula, we have
\begin{equation}
	\label{2.8N}
	\abs{\Si_N}=\frac{\pi Nm_N}{3}.
\end{equation}


{For $h=h_r$ given by \eqref{2.9}, the Ginzburg-Landau energy functional \eqref{1.1}
reduces to
\begin{equation}\label{2.12}
	\cE_r(\psi,a)=\int_\Si \del{\frac{1}{2}\abs{\grad_a\psi}^2+\frac{1}{2}\abs{da }^2+\frac{\kappa^2}{4}\del{\abs{\psi}^2-r}^2}\,\om_1,
\end{equation}
{defined on the base surface $(\Si,h\equiv h_1)$.}
To see this, we distinguish the quantities related to the metric $h_r$ by tildes and consider 
	\begin{align}\label{scale1}
		\tilde \psi = r^{-1/2}\psi,\quad  \tilde a = r^{-1/2} a,
	\end{align} 
	Using the relations $\tilde \grad =r^{-1/2}\grad$ and $\tilde d = r^{-1/2}d$ (see e.g.~\cite[{Sect. B.1}]{MR4049917}), we have
	\begin{align}\label{scale2}
		{\grad _{\tilde a}\tilde \psi}=\frac1r\grad _a \psi, \quad \tilde d\tilde  a =\frac1rda,\quad \abs{\tilde\psi}^2=\frac1r\abs{\psi}^2.
	\end{align}
	Plugging \eqref{scale2} into \eqref{1.1} and using $\om_r=r\om_1$ (see \eqref{2.10}), we find
	\begin{align}
	r	\cE(\tilde \psi,\tilde a,h_r)  =&\int_\Si \del{\frac{1}{2}\abs{\grad _{\tilde a}\tilde \psi}^2+\frac{1}{2}\abs{\tilde d\tilde  a}^2+\frac{\kappa^2}{4}\del{\abs{\tilde\psi}^2-1}^2} r^2\om_1 \notag \\
		=&\int_\Si \del{\frac{1}{2}\abs{\grad_a\psi}^2+\frac{1}{2}\abs{da}^2+\frac{\kappa^2}{4}\del{\abs{\psi}^2-r}^2} \om_1=\cE_r(\tilde \psi,\tilde a),\label{enRel}
	\end{align}
	which gives \eqref{2.12}.
}

The Euler-Lagrange equations for $\cE_r(\psi,a)$ are the rescaled
\eqref{GL}, given by
\begin{equation}
	\label{2.13}
	\begin{aligned}
		-\Lap_a\psi&=\kappa^2\del{r-\abs{\psi}^2}\psi,\\
		d^*d a&=\Im (\bar{\psi}\grad_a\psi),
	\end{aligned}
\end{equation}
{in the space $X^k,\,k\ge2$ defined in \eqref{X}.}
{For suitable values of $r>0$, we seek solution pair $(\psi,a)$ to \eqref{2.13} on the unscaled surface $(\Si,h)$.}
In \secref{sec:2.5}, 
we show that 
the  parameter $r>0$ 
corresponds
to the average field {(magnetic flux)} strength.
{In Type II superconductors,} the variation of the latter 
triggers 
second-order phase transition.
Hence, in what follows we will use
$r$ as the bifurcation parameter.

{The rescaled Ginzburg-Landau equations
\eqref{2.13} are the central objects of the
subsequent sections.}

\subsection{Quantization of Magnetic Flux}\label{sec:2.5}
In this subsection, we state the extension of the Chern-Weil
correspondence (known in the physics literature as magnetic flux quantization) to non-compact Riemann surfaces.

Recall that $\deg E$ denotes the topological degree,
or the first Chern number, of the unitary line bundle $E$, see Appendix \ref{sec:A}.

\begin{theorem}[Chern-Weil correspondence for non-compact Riemann surfaces]\label{thm2.1} 
	Let $\Si$ be a non-compact Riemann surface,
	and $E\to \Si$ be a unitary line bundle.
	For every connection $a$ on $E\to\Si$ with $\abs{\int_\Si da}<\infty$,
	there holds
	\begin{equation} 
		\label{2.19}  
		\frac{1}{2\pi}\int_\Si  da =   \deg E.
	\end{equation}
\end{theorem}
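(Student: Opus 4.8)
The plan is to reduce the statement to the classical Chern--Weil theorem on the \emph{compact} Riemann surface $\bar\Si$ obtained by filling in the finitely many cusps of $\Si$, and then to show that the difference between the flux computed on $\Si$ and on $\bar\Si$ is concentrated in arbitrarily small neighbourhoods of the cusps and is killed by the finiteness hypothesis. First I would recall that, since $\Si\cong\Hb/\Ga$ has only $m<\infty$ cusps, adjoining one point per cusp produces a compact Riemann surface $\bar\Si$ to which the unitary line bundle extends as $\bar E\to\bar\Si$, with $\deg E=\deg\bar E$ (this is, in effect, how the first Chern number of $E$ is defined). The compact case is then standard: for any smooth connection $\bar a$ on $\bar E$ one has $\tfrac1{2\pi}\int_{\bar\Si}d\bar a=\deg\bar E=\deg E$. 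If a self-contained argument is desired, this follows from the Poincar\'e--Lelong formula applied to the divisor of a meromorphic section of $\bar E$, or equivalently from counting, with sign and multiplicity, the zeros of a generic smooth section and applying Stokes' theorem on the complement of small disks about them.

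Next I would compare $a$ with this extended reference connection. Fixing a smooth connection $\bar a$ on $\bar E$, the difference $\beta:=a-\bar a$ is a globally defined $1$-form on $\Si$, so $da-d\bar a=d\beta$. Let $\Si_Y:=\bar\Si\setminus\bigcup_{j=1}^m D_j(Y)$, where $D_j(Y)$ is a coordinate disk about the $j$-th cusp point shrinking to that point as $Y\to\infty$ (in cusp coordinates $q=e^{2\pi i z}$, the circle $\partial D_j(Y)$ is the image of the horocycle $\Im z=Y$). Stokes' theorem on $\Si_Y$ gives
\begin{equation*}
	\int_{\Si_Y}da-\int_{\Si_Y}d\bar a=\int_{\Si_Y}d\beta=\sum_{j=1}^m\oint_{\partial D_j(Y)}\beta .
\end{equation*}
Letting $Y\to\infty$, the two integrals on the left converge: $\int_{\Si_Y}da\to\int_{\Si}da$ is finite by hypothesis, while $\int_{\Si_Y}d\bar a\to\int_{\bar\Si}d\bar a=2\pi\deg E$ since $d\bar a$ is smooth on the compact $\bar\Si$ and the excised disks have vanishing area.

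The crux, and the step in which the assumption $\bigl|\int_\Si da\bigr|<\infty$ is genuinely used, is to show that the boundary sum $\sum_{j}\oint_{\partial D_j(Y)}\beta$ tends to $0$ as the horocycles recede into the cusps. Applying Stokes on a fixed annulus $D_j(Y_0)\setminus D_j(Y)$ writes $\oint_{\partial D_j(Y)}\beta$ as a fixed term minus $\int_{D_j(Y_0)\setminus D_j(Y)}(da-d\bar a)$, so the convergence of $\int_\Si da$ controls the tail and shows the boundary limits exist; that they sum to zero reflects the compatibility of $a$ with the extension $\bar E$ across the cusps (equivalently, single-valuedness of the automorphy factor defining $E$), which is what makes the analytically defined flux match the quantized value inherited from $\bar\Si$. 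Combining the three facts yields $\int_\Si da=2\pi\deg E$.

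I expect this cusp analysis to be the only real obstacle. The genus and the detailed hyperbolic geometry of $\Si$ enter nowhere beyond the finiteness of the number of cusps, and the integer quantization is imported wholesale from the compact Chern--Weil theorem on $\bar\Si$; all that the non-compact setting adds is the need to show, via the finiteness of the total curvature, that no flux leaks out through the cusp ends.
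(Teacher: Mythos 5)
Your route is genuinely different from the paper's: the paper never compactifies, but works directly on a fundamental domain $F_\Si\subset\Hb$, represents the connection by a $\Ga$-equivariant $1$-form $A$ satisfying $\g^*A=A-i\rho_\g^{-1}d\rho_\g$, and evaluates $\int_{\partial F_\Si}A$ using the pairing of sides by the generators of $\Ga$; the hyperbolic sides produce exactly Gunning's combinatorial first Chern number $c_1(\si)$ of the automorphy factor (which is the paper's definition of $\deg E$, via \eqref{c1g} and \eqref{deg}), while the two sides running into each cusp cancel because the gauge exponent $\si_{S_j}$ of a parabolic generator is constant. Your plan --- compactify, invoke compact Chern--Weil on $\bar\Si$, and control the discrepancy at the cusps --- is a reasonable alternative in outline, but as written it has two genuine gaps.

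The serious one is the boundary-vanishing step, which you correctly call the crux but then justify circularly. The hypothesis $\abs{\int_\Si da}<\infty$ gives only the \emph{existence} of the limits $\lim_{Y\to\infty}\oint_{\partial D_j(Y)}\beta$, not that they vanish; your appeal to ``compatibility of $a$ with the extension $\bar E$'' is not a hypothesis of the theorem but precisely the missing assumption. Concretely, on the trivial bundle (trivial automorphy factor, $\deg E=0$) take $a=f(y)\,dx$ in a cusp neighbourhood $\{0\le x\le1,\ y\ge1\}$ and $a=0$ elsewhere, with $f$ smooth, vanishing near $y=1$, and $f(y)\to c\ne0$: this is a bona fide connection on $E\to\Si$, its total curvature $\int_\Si da=-c$ is finite, every boundary limit exists, yet the limit equals $c\neq 0$. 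So the vanishing cannot be derived from the stated hypotheses; some decay of the connection at the cusps must be invoked (the paper's own proof uses this implicitly when it applies Stokes on the noncompact $F_\Si$ and discards the horocycle contributions at infinity). The second gap is your identification $\deg E=\deg\bar E$, presented as definitional. It is not the paper's definition: every line bundle over a noncompact surface is topologically trivial, so $E$ admits extensions across the cusps of \emph{every} degree, and singling out ``the'' extension requires the trivializations at the cusps determined by the automorphy factor --- exactly the data the paper's proof manipulates. Matching the degree of that canonical extension with the paper's $c_1(\si)$ (or with the count of zeros of admissible sections) is a step of essentially the same nature as the theorem itself, and cannot be assumed.
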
 
The proof of this theorem is given in Appendix \ref{sec:flux-quant-pf}.

{Since $\deg E\in\Zb$, equation \eqref{2.19} implies quantization of the average magnetic field (magnetic flux). 

{The Chern-Weil correspondence imposes a direct constrain on constant curvature connections. Indeed, 
let $a^\beta$ be a constant curvature connection on $(\Si,h_r)$ satisfying $da^\beta=\beta\om_r$ for some $\beta\in\Rb$ (e.g. \eqref{1.4}).
Then, by relation \eqref{2.19}, we have
\begin{align}  \label{2.20}
	\beta= \frac{1}{|\Si|_r}\int_\Si  da  = \frac{2\pi \deg E}{|\Si|_r},
\end{align}
which, together with the relation $\abs{\Si}_r=r\abs{\Si}$, gives  $\beta=b_r$ as in \eqref{b}.}

\eqref{2.20} relates the average magnetic field (curvature) on $\Si$ to the geometry of $\Si$,
and the topology of the line bundle $E\to \Si$.
{Indeed,
\eqref{2.20} shows that varying  the metric on $\Si$
as in \eqref{2.9}--\eqref{2.10}
amounts to varying the
constant curvature solution.}


\section{Linearized Ginzburg-Landau Equations}\label{sec:3}

From now on, the central object of study will be the rescaled GL equations \eqref{2.13}.  
	As explained in \secref{sec:2.3}, these equations are posed on the unscaled surface $(\Si,h\equiv h_1)$. In this section, we consider the linear problem 
	associated to \eqref{2.13}.

The Ginzburg-Landau equations \eqref{2.13} have constant curvature solutions
\begin{equation}
	\label{3.1}
	\text{$(\psi,a)=(0,a^{b})$, where  $a^{b}$ is any constant curvature connection with $da^{b}={b}\om$.}
\end{equation}
{As a consequence of the Chern-Weil correspondence \eqref{2.19}, since $a^b$ is a constant curvature connection on $(\Si,h \equiv h_1)$, the number $b$ is given by \eqref{b0}.}

Linearizing \eqref{2.13} at the solution \eqref{3.1}, we get a decoupled system
\begin{align}
	\label{3.2}
	(-\Lap_{a^b}-\kappa^2r)\phi&=0,\\
	\label{3.3}
	d^*d\alpha&=0.
\end{align}
These are the main objects of this section. 
At this point, the unknown in \eqref{3.2}--\eqref{3.3} here is a (section, connection)-pair on a unitary line bundle $E\to (\Si,h_1)$. The curvature parameter $r$ from the unscaled GL equations now enters only through the second term in the l.h.s. of \eqref{3.2}.  


Our goal now is to obtain an explicit description of the solutions
to \eqref{3.2}--\eqref{3.3} in the Sobolev space $X^s$ defined in \eqref{X}.

{
	\subsection{Solving the Maxwell Equation}
	First, we solve the homogeneous equation \eqref{3.3} (the free Maxwell equation) for the connection $a$.  
	
	By \cite[{Lem. 3.2}]{MR4049917}, $\al$ solves \eqref{3.3} if and only
	if $\al$ is a constant curvature connection. Thus $a^b$ is a solution. By the Chern-Weil correspondence, 
	the linearization, \eqref{3.3}, around $(\psi,a)=(0, a^b)$ must be satisfied by a $1$-form $\al$ of degree 0.
	So again, by \cite[{Lem. 3.2}]{MR4049917},  $\al$ must be a flat connection: In the distributional sense,
	\begin{equation}
		\label{3.4}
		d^*d\al=0 \iff \al \text{ is flat } \iff d \al = 0.
	\end{equation}
	So $\al$ must be closed; but by the definition of \eqref{X} it must also be co-closed and hence harmonic. This establishes the following.
	\begin{proposition}
		\label{prop3.1}
		$d^*d\ge0$ and the solution space to \eqref{3.3} in $\vec \cH^2$ is 
		\begin{equation}
			\label{3.5}
			\Om:=\Null d^*d\vert_{\vec \cH^2}=\Set{\text{harmonic 1-forms on}\,\,\, {\Si}}.
		\end{equation}
	\end{proposition}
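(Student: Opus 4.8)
The plan is to give a short, self-contained argument resting on the non-negativity of $d^*d$ together with the co-closedness that is built into the very definition of $\vec\cH^2$. The preceding discussion, via Lemma 3.2 of \cite{MR4049917} and the degree-$0$ constraint of the Chern-Weil correspondence, already supplies the equivalence $d^*d\al=0\iff d\al=0$; but the cleanest route to the proposition is the direct energy identity, which I would present first and then dovetail with that equivalence.

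First I would establish $d^*d\ge0$. For any $\al\in\vec\cH^2$ one has $\al\in\vec\cH^1$ and $d^*d\al\in L^2$, so the Green identity
\[
\inn{d^*d\al}{\al}=\inn{d\al}{d\al}=\norm{d\al}^2\ge0
\]
(the norm being $L^2$) holds provided the integration by parts carries no boundary contribution. Since $(\Si,h_1)$ is a complete finite-area hyperbolic surface whose ends are cusps, there is no genuine boundary, and this identity extends from compactly supported forms to all of $\vec\cH^1$ by density; in particular $d^*d$ is a non-negative operator.

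Next I would prove the two inclusions defining $\Om=\Null d^*d\vert_{\vec\cH^2}$. For the inclusion $\subseteq$, suppose $\al\in\vec\cH^2$ satisfies $d^*d\al=0$. Pairing with $\al$ and invoking the energy identity gives $\norm{d\al}^2=0$, hence $d\al=0$, so $\al$ is closed; since every element of $\vec\cH^2$ is weakly co-closed by definition ($d^*\al=0$), the form $\al$ is simultaneously closed and co-closed, whence $\Lap\al=(dd^*+d^*d)\al=0$ and $\al$ is harmonic. For the reverse inclusion $\supseteq$, if $\al$ is harmonic then $d\al=0$ and $d^*\al=0$, so $d^*d\al=0$ trivially and $\al$ is co-closed; elliptic regularity together with the decay of $L^2$-harmonic forms at the cusps places $\al\in\vec\cH^2$, giving $\al\in\Om$.

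The step I expect to be the main obstacle is precisely the cuspidal analysis underlying two points used above: the vanishing of the boundary term in the Green identity, and, dually, the claim that $L^2$ harmonic $1$-forms genuinely lie in $\vec\cH^2$. Both hinge on the structure of $\Si$ near its ends: completeness of the hyperbolic metric rules out an honest boundary, while the standard Fourier expansion in the cuspidal coordinate (in which harmonic forms decay) guarantees both the absence of flux through the ends and the required Sobolev regularity. Once these cusp estimates are in hand the proposition follows at once, and it is consistent with the independent identification $d^*d\al=0\iff d\al=0$ obtained from Lemma 3.2.
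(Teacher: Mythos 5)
Your proof is correct, but it follows a genuinely different route from the paper's. The paper does not argue via the energy identity at all: it invokes Lemma 3.2 of \cite{MR4049917} (solutions of $d^*d\al=0$ are exactly constant curvature connections), then uses the Chern--Weil correspondence (\thmref{thm2.1}) to note that the perturbation $\al$ of $a^b$ has degree zero, so its constant curvature must vanish; hence $d^*d\al=0\iff\al$ flat $\iff d\al=0$, and co-closedness from the definition of $\vec\cH^2$ then gives harmonicity --- the same final step as yours. Your argument replaces the cited lemma and the flux-quantization step by a direct Gaffney-type integration by parts,
\[
\inn{d^*d\al}{\al}=\norm{d\al}_{\cL^2}^2,
\]
justified by completeness of the finite-area hyperbolic surface, which yields $d^*d\ge0$ and the inclusion $\Om\subseteq\Set{\text{harmonic forms}}$ in one stroke. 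What each approach buys: the paper's route outsources the analysis to a lemma established in the compact setting and gains the geometric interpretation (a degree-zero constant curvature connection is flat), but it quietly leaves both the non-negativity claim $d^*d\ge0$ and the applicability of that lemma on a non-compact surface implicit; your route is self-contained and confronts the non-compactness head-on, at the price of having to verify the vanishing of boundary terms at the cusps and the $\vec\cH^2$-membership of $\cL^2$-harmonic forms. You correctly identify these as the real obstacles, and your proposed treatment (cutoff/density argument on a complete manifold, i.e.\ Gaffney's theorem, plus cuspidal Fourier expansion and exponential decay for the reverse inclusion) is the standard and adequate way to close them; carried out in detail, your proof stands independently of \cite{MR4049917}.
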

}

\subsection{The Magnetic Laplacian}\label{secLap}

{Recall that $b=2\pi\deg E/\abs{\Si}$, and we choose the orientation of $E$ that makes   $b>0$. }
By the classification of constant curvature connections, \thmref{thmB.2},
an one-form  $\beta$ on $\Si$ satisfies  $d\beta=b\om$ and the equivariant 
condition  if and only if $\beta$ is gauge-equivalent to
\begin{equation}
	\label{3.6}
	a^b:=by^{-1}dx.
\end{equation}
Thus in what follows we fix the canonical choice $a^b$ as in \eqref{3.6}.

In the remaining subsections, we study the spectral properties of 
the magnetic Laplacian $-\Lap_{a^b}$ in the l.h.s. of \eqref{3.2}.
Locally, in the rectangular coordinate $z=x+iy$, we have
		\begin{align}\label{3.7} -\Delta_{a^b}=-y^2(\di_{xx}+\di_{yy}) +2i b y\p_x+b^2\text{ acting on }{F_\Si\subset \Hb}.
		\end{align} 
This follows from {direct} computation using the standard definitions in \secref{sec:A}.

We prove the following.
\begin{theorem}\label{thm3.1} 
	Let $\Si=\Hb/\Ga$ be a non-compact Riemann surface with $m$ cusps and no elliptic points. 
	\begin{enumerate}[label=(\alph*)]
		\item $- \Delta_{a^b}$ is self-adjoint; 
		\item $- \Delta_{a^b}\ge b$; 
		\item Let $\cS(\Si)\equiv \cS_k(\Si)$ denote the space of cusp forms on $\Si$ with weight $k=2b=4\pi n/\abs{\Si}$. Then $b$ is an eigenvalue of  $- \Delta_{a^b}$ if and only if $\cS(\Si)\neq \emptyset$, and the multiplicity of $b$ equals to $\dim \cS(\Si)$; 
		\item {The essential spectrum of $- \Delta_{a^b}$
		 consists of $m$ branches each of which filling  in the semi-axis $[1/4+b^2, \infty)$.  Hence,
\begin{equation}\label{essEq}
	\s_{\rm ess} (-\Lap_{a^b})= [1/4+b^2, \infty).
\end{equation}}
	\end{enumerate}
\end{theorem}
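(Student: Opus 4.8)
\section*{Proof proposal for \thmref{thm3.1}}

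The plan is to handle the self\-adjointness and lower\-bound statements (a)--(c) through a factorization of $-\Lap_{a^b}$ adapted to the complex structure, and to handle the essential spectrum (d) by localizing the analysis to the cusps. For part (a) I would first record that $(\Si,h_1)$ is a \emph{complete} Riemannian manifold: although each cusp has finite hyperbolic area, the distance to a cusp is $\int^\infty dy/y=\infty$, so the ends lie metrically at infinity. Since $-\Lap_{a^b}=\grad_{a^b}^*\grad_{a^b}+b^2$ is a Bochner Laplacian plus a bounded (constant) potential, the standard essential self\-adjointness theorem for generalized Laplacians on complete manifolds applies on $C_c^\infty$-sections, which gives (a).

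For (b) and (c) the key tool is the Bochner--Kodaira (Weitzenb\"ock) identity on the K\"ahler surface $(\Si,h_1)$. Splitting $\grad_{a^b}$ into its $(1,0)$- and $(0,1)$-parts, one obtains
\begin{equation}
	-\Lap_{a^b}=2\,\bar\p_{a^b}^*\bar\p_{a^b}+b,
\end{equation}
where $\bar\p_{a^b}$ is the $(0,1)$-component of $\grad_{a^b}$ and the zeroth-order term $b$ is produced by the curvature $F_{a^b}=da^b=b\om$ (in one complex dimension $\Lambda\om=1$). This immediately yields $-\Lap_{a^b}-b=2\,\bar\p_{a^b}^*\bar\p_{a^b}\ge0$, which is (b), with equality exactly on $\Null\bar\p_{a^b}$, the holomorphic sections of $E$. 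To finish (c) I would identify, via the gauge trivialization of \secref{sec:2.4}, these holomorphic sections with $\Ga$-automorphic holomorphic forms of weight $k=2b=4\pi n/\abs{\Si}$. Writing $\abs{\xi}^2=y^{k}\abs{f}^2$ for the associated weight-$k$ form $f=\sum_{n\ge0}a_n e^{2\pi i nz/h}$ and integrating against $\om_1=y^{-2}\,dx\,dy$, the norm near a cusp contains $\int^\infty\abs{a_0}^2 y^{k-2}\,dy$, which diverges unless the constant term $a_0$ vanishes; hence the square-integrable holomorphic sections are precisely the cusp forms, and the multiplicity of the eigenvalue $b$ equals $\dim\cS_k(\Si)$.

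For the essential spectrum (d) I would invoke a decomposition principle: write $\Si=\Si_0\cup C_1\cup\dots\cup C_m$ with $\Si_0$ compact and each $C_j$ a standard hyperbolic cusp, and decouple along the bounding horocycles by Neumann/Dirichlet bracketing. The compact core has compact resolvent and so contributes only discrete spectrum, whence $\s_{\rm ess}(-\Lap_{a^b})$ is the union over $j$ of the essential spectra of the $m$ model-cusp operators. On a model cusp I would Fourier-expand in the periodic variable $x$; the mode $e^{2\pi inx/h}$ reduces $-\Lap_{a^b}$ (see \eqref{3.7}) to the one-dimensional operator $-y^2\p_{yy}+(\mu_n y-b)^2$ with $\mu_n=2\pi n/h$, acting on $L^2((Y_0,\infty),y^{-2}\,dy)$. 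For $n\ne0$ the potential $(\mu_n y-b)^2\to\infty$ is confining, so these modes have purely discrete spectrum and contribute nothing. For $n=0$ the first-order magnetic term drops out, and after the substitution $t=\log y$ together with the unitary conjugation $c_0\mapsto e^{-t/2}c_0$ that flattens the measure, the operator becomes $-\p_{tt}+\tfrac14+b^2$ on a half-line, whose essential spectrum is $[\tfrac14+b^2,\infty)$. Each of the $m$ cusps thus contributes one branch equal to $[\tfrac14+b^2,\infty)$, and \eqref{essEq} follows.

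The main obstacle is part (d): making the decomposition principle rigorous for the magnetic Laplacian acting on \emph{sections}. One must verify that the bracketing perturbations are relatively compact (equivalently, that the resolvent difference between the full operator and the decoupled one is compact), so that their essential spectra coincide, and then supply the matching inclusion $[\tfrac14+b^2,\infty)\subseteq\s_{\rm ess}(-\Lap_{a^b})$ by a Persson-type argument together with explicit Weyl sequences built from approximate plane waves $e^{i\xi t}$ localized deep in the zero Fourier mode of a single cusp. The pleasant simplification, already visible above, is that the magnetic term vanishes identically on the zero mode, so the bottom of the essential spectrum is just the Laplace--Beltrami value $\tfrac14$ rigidly shifted by the scalar $b^2$.
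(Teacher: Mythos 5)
Your proposal is correct in its architecture and, for parts (b)--(c), is essentially the paper's own route: a Bochner--Kodaira/Weitzenb\"ock factorization showing $-\Lap_{a^b}-b\ge 0$ with kernel the $L^2$ holomorphic sections, then identification of these with cusp forms of weight $k=2b$. The genuine differences are in execution. For part (a), your completeness argument (cusps lie at infinite distance, so essential self-adjointness of the covariant Laplacian on a complete manifold applies) is self-contained, whereas the paper simply cites the literature. For part (d), the paper reduces to the cusps not by Dirichlet--Neumann decoupling but by the IMS localization formula $\sum_i\chi_i^2=\one$, proving that the localization error times $(-\Lap_{a^b}+1)^{-2}$ is compact and invoking a Weyl-type theorem for relatively compact perturbations; this is the same amount of work as the resolvent-difference compactness you correctly flag as the main obstacle, so neither route buys a shortcut there. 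On the model cusp the treatments diverge more: the paper conjugates by $y^{-1}$, Fourier-decomposes, and uses Hardy's inequality to obtain only the \emph{lower} bound $\s_{\rm ess}\subset[\tfrac14+b^2,\infty)$, and must then prove the reverse inclusion by a separate appendix construction (Weyl sequences from truncated generalized eigenfunctions $y^{1/2-ik}$, symmetrized into incomplete Poincar\'e/Eisenstein series). Your substitution $t=\log y$ with the measure-flattening conjugation turns the zero mode exactly into $-\p_{tt}+\tfrac14+b^2$ on a half-line, so both inclusions for the model operator come at once; and your plane waves $e^{i\xi t}$ are precisely the paper's $y^{1/2+i\xi}$ in disguise, so the Weyl-sequence step agrees in substance. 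Your disposal of the $n\ne0$ modes (confining potential, hence discrete spectrum) is also tidier than the paper's operator inequality $p_k^b\ge p_0^b$, which implicitly needs the cusp parameter $s_i$ taken large relative to $b$.

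Two blemishes, neither fatal. First, the identity $-\Lap_{a^b}=\grad_{a^b}^*\grad_{a^b}+b^2$ in your part (a) is false: $-\Lap_{a^b}$ \emph{is} the Bochner Laplacian $\grad_{a^b}^*\grad_{a^b}$, and the $b^2$ appears only in the local coordinate expression \eqref{3.7}; your self-adjointness argument applies verbatim to $\grad_{a^b}^*\grad_{a^b}$ itself, so this is cosmetic. Second, your divergence argument in part (c) --- the constant term contributes $\int^\infty\abs{a_0}^2y^{k-2}\,dy=\infty$ --- requires $k\ge1$, i.e.\ $b\ge\tfrac12$. Since $b=\deg E/(2g-2+m)$ can be smaller than $\tfrac12$ (e.g.\ $\deg E=1$ on $\Hb/\Ga(4)$), the identification $\cL^2\cap M_k(\Si)=\cS_k(\Si)$ needs separate justification in the low-weight regime; the paper sidesteps this by citation, so you are in good company, but a complete proof should address that case explicitly.
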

\begin{remark}
	If the field strength $b=0$, {then $E$ is the trivial line bundle,} and $-\Lap_{a^b}$ reduces to
	the hyperbolic Laplacian $-\Delta$ on $\Si$.
	The spectral properties of this operator have important bearings for number theory. 
	See e.g. \cite{MR1942691} and the references {therein}.

	For $b>0$, some of the results above have been obtained in \cite{MR776146, MR870891,MR1069475,Pnueli}
	for specific choices of $\Si$ in relevant physics context.
	For the connection to weighted Maass forms in number theory, see \cite[{Sec. 2}]{MR1431508}.
\end{remark}

{\begin{remark}\label{remEmb}
	For $b=1/2$, {the eigenvalue $b$ is embedded at the bottom of the essential spectrum of $-\Lap_{a^b}$.} 
\end{remark}}

\begin{proof}[Proof of \thmref{thm3.1}]
	The proof of Part (a) is standard
	and can be found in \cite[{Sec. 2.1}]{MR1431508}.
	 Parts (b)-(c) follow from the Weitzenb\"ock formula given in \secref{sec:3.3}. Part (d) follows from the results
in \secref{sec:3.4}.
\end{proof}

We also obtain a  description
of the solution space to the static Schr\"odinger equation \eqref{3.2}. Before we state this result, we need some preliminary definitions. 

Let $\Ga\subset PSL(2,\Rb)$ be a Fuchsian group. Let $c_i$ be a cusp of $\Ga$. Then the stabilizer of $c_i$ is an infinite cyclic group generated by some parabolic transform. In symbols, 
$$Stab(c_i,\Ga)\equiv \Set{\gamma\in \Ga:\g c_i=c_i}=\left\langle\begin{pmatrix}1&r_i\\0&1\end{pmatrix}\right\rangle \text{ for some }r_i\in\Rb.$$
We call $\g_i\in SL(2,\Rb)$ {\it a scaling matrix} of $c_i$ if $\g_ic_i = \infty$ and $\g_i\del{\begin{smallmatrix}1&r_i\\0&1\end{smallmatrix}}\g_i^{-1}=\del{\begin{smallmatrix}1&1\\0&1\end{smallmatrix}}.$ Such $\g_i$'s exist, and are unique up to translation. See \cite[Chapt. 2.2]{MR1942691} for details.


\subsection{Weitzenb\"ock-type formula} \label{sec:3.3}

In this subsection, let $a$ be an arbitrary 
unitary connection on a smooth complex line bundle $E\to\Si$ (see Appendix \ref{sec:A} for the definitions). 

We decompose the covariant derivative  $\nabla_a$ into $(1, 0)$ and $(0, 1)$ parts as $\nabla_a=  \partial_a' +  {\partial_a''}$, where
\begin{align} \label{pA}\partial_a' := \partial +  a_c,\  \partial_a'' :=\overline{\partial} +  \bar a_c.\end{align} 
Here $\p := \frac{\partial}{\p z}\otimes d z$ and $\overline{\partial} := \overline{\frac{\partial}{\p z}}\otimes d\overline{z}$, where, as usual, $ \frac{\partial}{\p z}\equiv  \p_{z}:=(\partial_{x_1} - i \partial_{x_2})/2$,  $ \overline{ \frac{\partial}{\p z}} \equiv  \p_{\bar z}:=(\partial_{x_1} + i \partial_{x_2})/2$, and 
\begin{align} \label{compl-conn} a_c:=\frac12 (a_1 - ia_2) \otimes d z,\ \bar a_c:=\frac12 (a_1 + ia_2) \otimes d \bar z. \end{align}

With the definitions above, we can rewrite \eqref{pA} as
\begin{align} \label{pA''}
	\partial_a'' = \tilde\p_{a}''\otimes d\overline{z} ,\ \quad  \tilde\p_{a}'' := \bar\p_z +  \bar a^c,\ \quad a^c:= a_1- i a_2.\end{align} 
{Throughout this subsection, $a^c$ denotes the complexification of a real valued $1$-form, and is not to be confused with the constant curvature connection $a^b$.}

In the reverse direction, we have $a = 2\re a_c$ and  
\begin{equation}
	\tilde\p_{a}''  := \frac12 (\nabla_{1} + i \nabla_{2}). 
\end{equation}

In terms of $a_c$, the curvature is given by   $F_a=2 \re \bar \p a_c$. 
Now we prove the following relations:

\begin{proposition}\label{prop:Delta_A-repr}
	We have
	\begin{align}
		\label{curv-formula} * F_a &= \frac12 [\tilde\p_{a}'', \tilde\p_{a}''^*],\\
		\label{Lapl-formula} -\Delta_a &=  {\partial''_a}^*{\partial''_a} + *F_a,\\
		\label{Lapl-ineq} -\Delta_a &\ge *F_a. 
	\end{align}
\end{proposition}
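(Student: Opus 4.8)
The plan is to verify all three identities by a direct computation in a local conformal chart, isolating as the single analytic input the fact that the curvature measures the failure of the covariant derivatives to commute. I would choose a holomorphic coordinate $z=x_1+ix_2$ in which $h=\lambda\,dz\otimes d\bar z$, so that $\om=\lambda\,dx_1\wedge dx_2$ and the scalar operators $\tilde\p_a''=\tfrac12(\nabla_1+i\nabla_2)$ and its formal $L^2(\om)$-adjoint $\tilde\p_a''^*$ carry all the information. I would first dispose of \eqref{curv-formula}: computing the commutator $[\tilde\p_a'',\tilde\p_a''^*]$, the principal (second-order) and first-order parts cancel, and what survives is a multiple of $[\nabla_1,\nabla_2]$. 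The Ricci identity for a unitary connection on a line bundle gives $[\nabla_1,\nabla_2]=i\,F_a(\p_1,\p_2)$, i.e. the commutator of the covariant derivatives equals the curvature evaluated on the coordinate frame; since $F_a=da$ and $*F_a$ records exactly this component, one obtains $[\tilde\p_a'',\tilde\p_a''^*]=2*F_a$, which is \eqref{curv-formula}.

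With \eqref{curv-formula} in hand, \eqref{Lapl-formula} follows by rewriting the rough Laplacian $-\Delta_a=\nabla_a^*\nabla_a$ in terms of the same scalar operators. Decomposing $\nabla_a=\partial_a'+\partial_a''$ into its $(1,0)$- and $(0,1)$-parts, the two pieces are orthogonal, so on sections only the diagonal blocks ${\partial_a'}^*\partial_a'$ and ${\partial_a''}^*\partial_a''$ contribute; expressing both through $\tilde\p_a''$, $\tilde\p_a''^*$ and collecting terms, the symmetric (anticommutator) part reassembles $-\Delta_a$ up to the fixed normalization, while the antisymmetric part is precisely the commutator $[\tilde\p_a'',\tilde\p_a''^*]$. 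Substituting \eqref{curv-formula} converts this commutator into $*F_a$ and yields $-\Delta_a={\partial_a''}^*\partial_a''+*F_a$, which is \eqref{Lapl-formula}. Equivalently, this is the Bochner--Kodaira identity on $(0,0)$-forms specialized to a line bundle over a Riemann surface.

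The inequality \eqref{Lapl-ineq} is then immediate: since $\inn{{\partial_a''}^*\partial_a''\,s}{s}=\norm{\partial_a'' s}^2\ge0$, the operator ${\partial_a''}^*\partial_a''$ is nonnegative, so \eqref{Lapl-formula} gives $-\Delta_a\ge *F_a$ as quadratic forms. Applied to $a=a^b$, where $*F_{a^b}=*(b\,\om)=b$, this is exactly the lower bound $-\Delta_{a^b}\ge b$ of \thmref{thm3.1}(b), with equality holding precisely on the holomorphic sections, i.e. those with $\partial_{a^b}'' s=0$; this is what ties \eqref{Lapl-formula} to the eigenvalue statement in \thmref{thm3.1}(c).

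I expect the main obstacle to be computational bookkeeping rather than anything conceptual: the numerical constants in \eqref{curv-formula} and \eqref{Lapl-formula} (the factor $\tfrac12$, the coefficient $1$, and the sign in front of $*F_a$) are sensitive to the normalizations fixed in Appendix~\ref{sec:A} -- the Hermitian metric on $(0,1)$-forms, the convention for the Hodge star $*$ on a surface, and the sign convention in $\nabla_a$. The safeguard against errors is to carry out the commutator computation with the adjoint $\tilde\p_a''^*$ taken against the curved volume $\om$, so that the conformal factor $\lambda$ and its derivatives are tracked honestly, and to cross-check the final constants against the consistency requirement that holomorphic sections realize the lower bound in \eqref{Lapl-ineq} with equality.
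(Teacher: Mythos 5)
Your proposal is correct and takes essentially the same route as the paper's proof: a direct local computation that expresses $F_a$ and $-\Delta_a=\nabla_a^*\nabla_a$ through $\tilde\p_{a}''$ and $\tilde\p_{a}''^*$, identifies the curvature with the commutator $\tfrac12[\tilde\p_{a}'',\tilde\p_{a}''^*]$ (equivalently, with $[\nabla_1,\nabla_2]$ via \eqref{curv-expr}), obtains \eqref{Lapl-formula} from the anticommutator/commutator split of the Laplacian, and deduces \eqref{Lapl-ineq} from ${\partial''_a}^*{\partial''_a}\ge 0$. Your flagged concern about tracking normalization constants is apt, but it does not alter the argument, which matches the paper's.
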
 
\begin{proof}
	We compute in local coordinates. Using the relations $\n_{1} =\frac12 (-\tilde\p_{a}''+\tilde\p_{a}''^*)$, $\n_{2} =\frac{1}{2i} (\tilde\p_{a}''+\tilde\p_{a}''^*)$, together with the
	 expression \eqref{curv-expr} for the curvature $F_a$, we compute 
	\begin{align}\label{barp-comm}    &F_a =-\frac14 [\tilde\p_{a}''-\tilde\p_{a}''^*, \tilde\p_{a}''+\tilde\p_{a}''^*]= \frac12 [\tilde\p_{a}'', \tilde\p_{a}''^*],
	\end{align} 
	which gives \eqref{curv-formula}. 
	
	To find the expression for $\Delta_a$, we use the relation $\Delta_a=\n_i \n_{i}$ to compute 
	\begin{equation}\begin{aligned}
			\Delta_a &=\frac14 (\tilde\p_{a}''-\tilde\p_{a}''^*)^2  - \frac14 (\tilde\p_{a}''+\tilde\p_{a}''^*)^2\\&=-\frac12 (\tilde\p_{a}'' \tilde\p_{a}''^*+\tilde\p_{a}''^* \tilde\p_{a}'')\\
			& =-\tilde\p_{a}''^* \tilde\p_{a}''- \frac12 [\tilde\p_{a}'', \tilde\p_{a}''^*],
		\end{aligned}
	\end{equation}
	which gives \eqref{Lapl-formula}.
	
	Since ${\partial''_a}^*{\partial''_a}\ge 0$, eq. \eqref{Lapl-formula} implies \eqref{Lapl-ineq}.
 \end{proof}

\propref{prop:Delta_A-repr} and the fact that $*F_{a^b}=b$ imply the next two corresponding relations in the constant curvature case: 
\begin{corollary}\label{cor:Delta_A-part_A} Let $a^b$ be a constant curvature connection
	s.th. $*F_a=b$ Then 
	\begin{align}\label{Lapl-b-ineq} -\Delta_{a^b} &\ge b,\\
	\label{3.9}	K:=\Null (-\Delta_{a^b} - b) &= \Null {\partial''_{a^b}}.
	\end{align} 
	Hence,  $b$ is an eigenvalue of  
	$-\Delta_{a^b}$ if and only if $\Null {\partial''_{a^b}}\neq \{0\}$. \end{corollary}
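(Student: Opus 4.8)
The plan is to read off both displayed relations directly from \propref{prop:Delta_A-repr} by specializing the general formulas to the constant-curvature case, where the curvature term $*F_{a^b}$ is no longer a genuine multiplication operator but the honest scalar $b$. Once that reduction is made, the whole statement collapses to the elementary operator identity $\Null(T^*T)=\Null T$, and there is essentially nothing left to compute.

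First I would dispose of the inequality \eqref{Lapl-b-ineq}, which is immediate: the general bound \eqref{Lapl-ineq} reads $-\Delta_a\ge *F_a$, and under the hypothesis $*F_{a^b}=b$ the right-hand side becomes the constant $b$, so $-\Delta_{a^b}\ge b$. Next, for the null-space identity \eqref{3.9}, I would substitute $*F_{a^b}=b$ into \eqref{Lapl-formula} to obtain the operator identity
\begin{equation*}
	-\Delta_{a^b}-b = {\partial''_{a^b}}^*\,{\partial''_{a^b}},
\end{equation*}
so that $K=\Null(-\Delta_{a^b}-b)=\Null\bigl({\partial''_{a^b}}^*{\partial''_{a^b}}\bigr)$. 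The inclusion $\Null{\partial''_{a^b}}\subset K$ is clear. For the reverse inclusion I would take $\xi\in K$ and pair the identity against $\xi$,
\begin{equation*}
	0=\inn{{\partial''_{a^b}}^*{\partial''_{a^b}}\xi}{\xi}=\norm{{\partial''_{a^b}}\xi}^2,
\end{equation*}
which forces ${\partial''_{a^b}}\xi=0$. This yields $K=\Null{\partial''_{a^b}}$, and the concluding equivalence — that $b$ is an eigenvalue of $-\Delta_{a^b}$ precisely when $\Null{\partial''_{a^b}}\ne\{0\}$ — is then just the reformulation $K\ne\{0\}\iff\Null{\partial''_{a^b}}\ne\{0\}$.

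The hard part, to the extent there is one, is the functional-analytic bookkeeping rather than any computation: I would need the self-adjointness of $-\Delta_{a^b}$ on the correct $L^2$-domain (established in Part (a) of \thmref{thm3.1}) to make the quadratic-form pairing above legitimate and to guarantee that $\Null(T^*T)=\Null T$ holds for the closed operator $T={\partial''_{a^b}}$ and its Hilbert-space adjoint. Since $b$ is a bounded scalar, the perturbation $*F_{a^b}$ does not alter the operator domain, so passing from $-\Delta_{a^b}-b$ to ${\partial''_{a^b}}^*{\partial''_{a^b}}$ is unproblematic, and the reduction goes through on the same domain used throughout \secref{secLap}.
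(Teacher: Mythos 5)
Your proposal is correct and matches the paper's argument: the corollary is obtained exactly by substituting $*F_{a^b}=b$ into the Weitzenb\"ock identities \eqref{Lapl-formula}--\eqref{Lapl-ineq} of \propref{prop:Delta_A-repr}, with the null-space equality \eqref{3.9} following from the elementary fact $\Null(T^*T)=\Null T$ applied to $T=\partial''_{a^b}$. The paper states this as an immediate consequence without spelling out the quadratic-form pairing, which you have simply made explicit.
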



\begin{proof}[Proof of  Theorem \ref{thm3.1}, Parts (b), (c)]  Part (b) follows from \eqref{Lapl-b-ineq}. To complete the proof of  Part (c), we now claim
	\begin{align} \label{Nulldbar}\Null {\partial''_{a^b} }\vert_{\Om^0(E)}=H^0(E),
	\end{align}
where $\Om^0(E)$ denotes the space of sections of the line bundle $E$, and $H^0(E)$ the space of
holomorphic sections.

Indeed, suppose \eqref{Nulldbar} holds. 
By the equivalence described at the end of \secref{sec:1.1},
the space $H^0(E)$ of 
holomorphic sections of a unitary line bundle $E$ 
are  isomorphic (as {a} complex vector space) to the space $M_k(\Si)$ of
modular forms on $\Si$,
with weight $k=2b=4\pi\deg E /\abs{\Si}$.
Since $\Si$ is non-compact, 
the intersection $\cL^2(\Si)\cap M_k(\Si)$ equals to the space of cusp forms $\cS_k(\Si)$
with weight $k$. This fact follows from the results in  \cite[{Sect. 3}]{MR1942691} 
 and \cite[{Sects. 3-4}]{MR2112196}. 
Since we seek $H^s$ solution with $s\ge0$, the desired result follows from here. 

It remains to prove \eqref{Nulldbar}.
 On  bundles over Riemann surfaces, we have that  
	\begin{align} \label{dbarsqrd}
		F^{0,2}_a = \partial''_a  \circ \partial''_a  = 0. 
	\end{align}
	For higher dimensional manifolds, condition (\ref{dbarsqrd}) provides the means to construct a canonical holomorphization of $E$. The precise result is the theorem below, which gives \eqref{Nulldbar}, whereby completing the proof  of  Theorem \ref{thm3.1}(c). \end{proof}

{The following result was obtained in \cite{MR1079726, MR4049917}.  \cite{MR4049917} develops a more constructive approach when
$M$ is a closed Riemann surface.}
\begin{theorem} \label{thm:Holo}
	Given a $C^\infty$ complex vector bundle $E$ over $M$ with connection $\grad_a$ such that $\partial''_a  \circ \partial''_a  = 0$,
 there is a unique holomorphic vector bundle structure on $E$ such that $\partial''_a = \partial''$.
\end{theorem}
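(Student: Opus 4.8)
The plan is to recognize this as the Koszul--Malgrange integrability theorem and to reduce the construction of the holomorphic structure to producing enough local holomorphic frames. The starting observation is that a holomorphic structure on a $C^\infty$ complex vector bundle $E$ is the same datum as a Dolbeault operator, i.e.\ a first-order operator $\bar\partial_E\colon \Om^0(E)\to\Om^{0,1}(E)$ satisfying the Leibniz rule $\bar\partial_E(fs)=(\bar\partial f)s+f\,\bar\partial_E s$ together with the integrability condition $\bar\partial_E^2=0$; the local holomorphic sections are then precisely the kernel of $\bar\partial_E$. I would take $\bar\partial_E:=\partial''_a$. The Leibniz rule for $\partial''_a$ is immediate from the definition \eqref{pA} and the fact that $\grad_a$ is a connection, while the integrability condition $\partial''_a\circ\partial''_a=0$ is exactly the hypothesis (equivalently $F^{0,2}_a=0$, which on a Riemann surface is automatic, cf.\ \eqref{dbarsqrd}).

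The substance of the theorem is that an integrable Dolbeault operator actually comes from a holomorphic atlas. I would establish this by constructing, in a neighbourhood of each point, a frame $s_1,\dots,s_{\mathrm{rk}\,E}$ of local sections annihilated by $\partial''_a$. This amounts to solving the linear first-order system $\partial''_a s=0$ with prescribed value at the base point; the integrability condition $\partial''_a\circ\partial''_a=0$ is precisely what guarantees that this overdetermined-looking elliptic system is locally solvable and that the solution space has the full rank of $E$. On an arbitrary complex manifold $M$ this local solvability is the heart of the Newlander--Nirenberg/Koszul--Malgrange argument and is obtained from elliptic a priori estimates together with a homotopy or fixed-point scheme; on a Riemann surface (our case of interest) there are no $(0,2)$-forms, so the system reduces to the classical inhomogeneous $\bar\partial$-equation in one complex variable, which is solvable on small discs via the Cauchy--Pompeiu integral. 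Once such frames exist, the transition functions between two overlapping $\partial''_a$-flat frames are annihilated by $\bar\partial$, hence holomorphic, and therefore define a holomorphic vector bundle structure on $E$ whose Dolbeault operator is $\partial''_a$.

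For uniqueness I would argue that any holomorphic structure with $\bar\partial=\partial''_a$ must have the same sheaf of local holomorphic sections, namely $\ker\partial''_a$; since a holomorphic vector bundle is determined up to canonical isomorphism by its sheaf of holomorphic sections, the structure is unique.

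I expect the main obstacle to be the local existence of $\partial''_a$-flat frames: this is where the integrability hypothesis is genuinely used and where, in the general higher-dimensional statement, analytic input (elliptic estimates, and solving a nonlinear PDE in the Newlander--Nirenberg approach) is required. In the Riemann-surface setting relevant to this paper the obstacle essentially evaporates, since $F^{0,2}_a$ vanishes for dimensional reasons and local solvability follows from one-variable $\bar\partial$-theory; accordingly, for our purposes it suffices to invoke the established results \cite{MR1079726, MR4049917}, the latter providing a constructive argument when $M$ is a closed Riemann surface.
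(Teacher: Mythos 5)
Your proposal is correct, but note that the paper contains no proof of this theorem at all: it is stated purely as an imported result, with the preceding sentence attributing it to \cite{MR1079726, MR4049917} (the Koszul--Malgrange theorem in the form given by Donaldson--Kronheimer, together with the constructive closed-surface treatment in the second reference). So your closing remark---that for the purposes of this paper it suffices to invoke those references---is exactly what the authors do, and everything before that remark is content the paper omits. Your sketch of the omitted argument is sound: identifying a holomorphic structure with an integrable Dolbeault operator, setting $\bar\partial_E=\partial''_a$ (Leibniz rule from the connection property, integrability from the hypothesis, automatic on a surface since there are no $(0,2)$-forms), producing local $\partial''_a$-flat frames, noting that transition functions between flat frames lie in $\ker\bar\partial$ and are hence holomorphic, and deducing uniqueness from the fact that the sheaf of local holomorphic sections is forced to be $\ker\partial''_a$. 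The one step that deserves more care if written out in full is the local frame construction: in a local trivialization one must solve the matrix equation $\bar\partial g=-\bar a_c\, g$ with $g$ \emph{invertible}, which for rank greater than one requires shrinking the disc and a Neumann-series or fixed-point argument keeping $g$ near the identity---this is precisely the ``elliptic estimates plus fixed-point scheme'' you allude to. In the rank-one case actually used in this paper the issue evaporates, since one may solve $\bar\partial u=\bar a_c$ by the Cauchy--Pompeiu formula and take $g=e^{-u}$, which is automatically nonvanishing. In short: the paper buys brevity by outsourcing the theorem; your route supplies the standard argument and makes visible where the integrability hypothesis and the one-variable $\bar\partial$-theory actually enter.
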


\subsection{Essential Spectrum of the Magnetic Laplacian} \label{sec:3.4}

In this subsection, we compute the essential spectrum of the  linearized operator
$-\Lap_{a^b}$.
Here we use a direct method of geometric decomposition, and 
we will use freely the standard definitions from  Appendix \ref{sec:A} and results from Appendix \ref{sec:aut-fact-ccc}.


	First,  we identify $\Si$ with a fundamental domain $F_\Si\subset \Hb$ of $\G$,  with the sides identified  as in the proof of Theorem \ref{thm2.1}. Let
	\begin{equation}\label{gammai}
		\g_i:=\left\{\begin{aligned}
			&\begin{pmatrix}0&-1\\1&-c_i\end{pmatrix}\quad (c_i\ne \infty),\\
			&\begin{pmatrix}1&0\\0&1\end{pmatrix} \quad (c_i=\infty).
		\end{aligned}\right.
	\end{equation}
	Then $\g_i\in SL(2,\Rb)$ and 
	$\gamma_ic_i=\infty$, where the action of $\g_i$ on {$F_\Si\subset \Hb$ }is the M\"obius transform \eqref{2.2}. Denote by $\varphi_i$ the action of $\g_i$ on $F_\Si\subset \Hb$. Then
	\begin{equation}\label{varphi}
		\varphi_i:\left\{\begin{aligned}
			&z\mapsto -\frac{1}{z-c_i}\quad (c_i\ne \infty),\\
			&z\mapsto z \quad (c_i=\infty).
		\end{aligned}\right.
	\end{equation}
See \figref{fig:phi} below.
	
	Now, we decompose  $F_\Si$  
	into a compact connected set, $U_0$, and  neighbourhoods $U_i$ of the cusps $c_i$, in
	such a way that
\begin{enumerate}
	\item   $U_i\cap U_j=\emptyset$ for $1\le i\ne j\le m$, and $U_0:=F_\Si\setminus\bigcup_{i=1}^m U_i$ is compact; 
	\item   For $i=1,\ldots,m$, 
	  the map $\varphi_i$ maps the domains  $U_i$ isometrically onto the half-cylinder 
	 \begin{align}\label{Xc} Z_i:=\{z\in \C: \Im z>s_i\}/\Z,
	 \end{align}  for some $s_i\gg 1 $.
\end{enumerate}
So long as $s_i\gg1$,  such decomposition of $F_\Si$ is easy to construct, see \figref{fig:decomp} below.
	
	\begin{figure}[h]
		\centering
	\begin{tikzpicture}[scale=1.6]
	\draw [->] (-1.5,0)--(1.5,0);
	\node [right] at (1.5,0) {$\Im z=0$};
	\node [below] at (0,0) {$z=0$};
	\node [below] at (-1,0) {$z=-1$};
	\node [below] at (1,0) {$z=1$};
	
	\draw [->] (-1.3,.25)--(-1.1,.25);
	\draw [->] (-1.3,.25) to [out=55,in=150](.8,.55);
	\draw [->] (-1.5,0)--(1.5,0);
	\draw [->] (-1.5,0)--(1.5,0);
	\draw [->] (-1.5,0)--(1.5,0);
	\draw [->] (-1.5,0)--(1.5,0);
	\node [left] at (-1.3,.25) {$U_1$};
	
	\draw [->] (-1.3,1.25)--(-1.1,1.25);
	\node [left] at (-1.3,1.25) {$U_3$};
	
	\draw [->] (-.5,.25)--(-.2,.25);
	\node [left] at (-.5,.25) {$U_2$};

	\draw [->] (1.3,.75)--(1.1,.75);
	\node [right] at (1.3,.75) {$U_0$};	
	
	\draw[black,thick] (-1,2)--(-1,0) to [out=90,in=180] (-0.5,.5) to [out=0,in=90](0,0) to [out=90,in=180] (0.5,.5) to [out=0,in=90](1,0)--(1,2);
	
	\clip (-1,2)--(-1,0) to [out=90,in=180] (-0.5,.5) to [out=0,in=90](0,0) to [out=90,in=180] (0.5,.5) to [out=0,in=90](1,0)--(1,2);

	\fill[fill=gray] (.5,0) arc (0:180:.5);
	\fill[fill=gray] (1.5,0) arc (0:180:.5);
	\fill[fill=gray] (-.5,0) arc (0:180:.5);
	\fill[fill=gray] (-1,1) rectangle (1,2);

\end{tikzpicture}
\caption{Schematic diagram for the decomposition of a fundamental domain of $\Gamma(2)$ in $\mathbb{H}$ with three cusps $c_1=1,c_2=0,c_3=\infty$. }
		\label{fig:decomp}
	\end{figure}
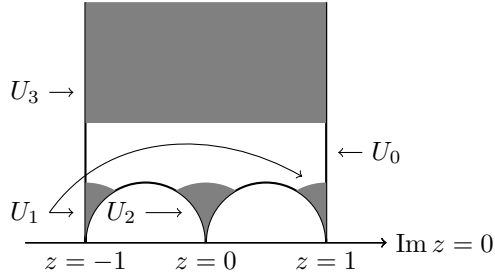

	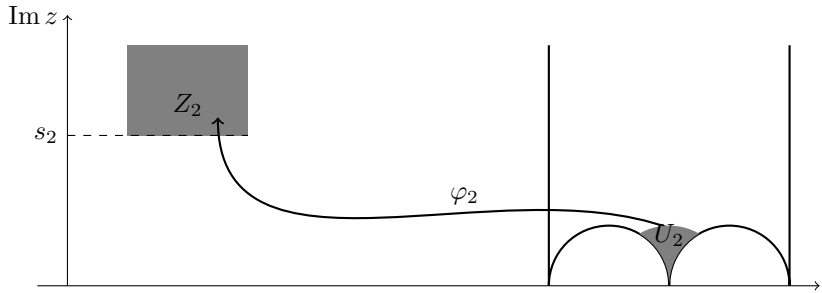
\begin{figure}[h]
	\centering
	\begin{tikzpicture}[scale=1.6]
		\draw [->] (-5.25,0)--(1.25,0);
		
		\draw [->] (-5,0)--(-5,2.25);
		\node [left] at (-5,2.25) {$\Im z$};

		%
		
		
		\fill[fill=gray] (-4.5,1.25) rectangle (-3.5,2);
		\node at (-4,1.5) {$Z_2$};
		\draw[dashed] (-5,1.25) to (-3.5,1.25);
		\node[left] at (-5,1.25) {$s_2$};
		
		\draw [->,thick] (0,.48) to [out=160, in=-90](-3.75,1.4);
		\node [above] at (-1.7,.6) {$\varphi_2$};
		
		\draw[black,thick] (-1,2)--(-1,0) to [out=90,in=180] (-0.5,.5) to [out=0,in=90](0,0) to [out=90,in=180] (0.5,.5) to [out=0,in=90](1,0)--(1,2);
		
		\clip (-1,2)--(-1,0) to [out=90,in=180] (-0.5,.5) to [out=0,in=90](0,0) to [out=90,in=180] (0.5,.5) to [out=0,in=90](1,0)--(1,2);

		\fill[fill=gray] (.5,0) arc (0:180:.5);
		\node[above] at (0,.25) {$U_2$};

	\end{tikzpicture}
	\caption{Schematic diagram illustrating the map $\varphi_2$ from \eqref{varphi} associated to cusps $c_2=0$. }
	\label{fig:phi}
\end{figure}

	 In the next two lemmas, we analyze the spectral property of $-\Lap_{a^b}$
in each domain  $U_i$ separately. 
\begin{lemma}\label{lem:ess-spec-Lapl1} Let $-\Delta_{a^b}\big|_{U_i}$ be the restriction of $-\Delta_{a^b}$ on $L^2(U_i)$ with the Dirichlet boundary conditions on $\p U_i$. Then 
	\begin{align}\label{ess-spec-Lapl1} \s_{\rm ess}(-\Delta_{a^b})=\bigcup_{i=1}^m \s_{\rm ess}(-\Delta_{a^b}\big|_{U_i}).\end{align}   
\end{lemma}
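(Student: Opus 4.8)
The plan is to establish \eqref{ess-spec-Lapl1} as a \emph{decomposition principle}: the essential spectrum of $-\Delta_{a^b}$ is insensitive both to the imposition of Dirichlet conditions along the compact cutting curves $\bigcup_{i=1}^m\partial U_i$ and to the compact core $U_0$, so it is governed entirely by the behaviour of the operator in the cusp neighbourhoods $U_1,\dots,U_m$. Concretely, I would prove the two inclusions in \eqref{ess-spec-Lapl1} separately, using Weyl's singular-sequence characterization of the essential spectrum together with a partition of unity subordinate to the cover $\{U_0,U_1,\dots,U_m\}$ and the compactness of the Sobolev embedding $\cH^1(U_0)\hookrightarrow L^2(U_0)$ (Rellich--Kondrachov), valid since $U_0$ is compact.

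For the inclusion $\bigcup_{i=1}^m\s_{\rm ess}(-\Delta_{a^b}\big|_{U_i})\subseteq\s_{\rm ess}(-\Delta_{a^b})$, fix $i$ and $\lambda\in\s_{\rm ess}(-\Delta_{a^b}\big|_{U_i})$, and take a singular Weyl sequence $(u_n)$ for the Dirichlet realization on $U_i$, which one may take to consist of smooth sections compactly supported in the interior of $U_i$. Extending each $u_n$ by zero produces a sequence on $\Si$ that lies in the domain of $-\Delta_{a^b}$, is normalized, converges weakly to $0$, and satisfies $(-\Delta_{a^b}-\lambda)u_n\to0$ in $L^2$; hence $\lambda\in\s_{\rm ess}(-\Delta_{a^b})$. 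This is the straightforward direction.

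For the reverse inclusion, take a singular Weyl sequence $(u_n)$ for $-\Delta_{a^b}$ and choose a smooth partition $\sum_{i=0}^m\chi_i^2=1$ with $\chi_i$ supported in $U_i$ for $i\ge1$, $\chi_0$ supported in a compact set, and all gradients $\nabla\chi_i$ supported in the compact overlap region. The IMS localization formula $-\Delta_{a^b}=\sum_i\chi_i(-\Delta_{a^b})\chi_i-\sum_i|\nabla\chi_i|^2$ reduces matters to the localized pieces $\chi_iu_n$. Since $u_n\rightharpoonup0$ and the embedding on $U_0$ is compact, $\|\chi_0u_n\|\to0$ and all commutator/localization errors (supported in the compact overlap) tend to $0$; thus the mass of $u_n$ concentrates in the cusps, some $\chi_iu_n$ with $i\ge1$, after renormalization, is a singular Weyl sequence for the Dirichlet operator $-\Delta_{a^b}\big|_{U_i}$, and $\lambda\in\s_{\rm ess}(-\Delta_{a^b}\big|_{U_i})$. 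Finally, since $U_0$ is compact the Dirichlet realization $-\Delta_{a^b}\big|_{U_0}$ has compact resolvent, so $\s_{\rm ess}(-\Delta_{a^b}\big|_{U_0})=\emptyset$ and the $i=0$ term drops from the union, giving \eqref{ess-spec-Lapl1}.

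The main obstacle I expect is the bookkeeping in the reverse inclusion: verifying that the localized sequence $\chi_iu_n$ remains a genuine singular Weyl sequence, in particular that its norm stays bounded below along a subsequence (so that not all the mass escapes into the compact core) and that the commutator terms vanish. The latter requires controlling $[\,-\Delta_{a^b},\chi_i]u_n$, a first-order expression in $u_n$ with coefficients supported in the compact overlap region; its $L^2$-norm is bounded by the $\cH^1$-norm of $u_n$ localized there, which tends to $0$ by the compactness of the embedding on that relatively compact set together with the uniform first-order bound $\|\grad_{a^b}u_n\|^2=\lambda\|u_n\|^2+\langle(-\Delta_{a^b}-\lambda)u_n,u_n\rangle$ obtained by pairing. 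This is the one place where the hyperbolic geometry and the explicit form \eqref{3.7} of $-\Delta_{a^b}$ enter, and only to guarantee the usual elliptic a priori estimates.
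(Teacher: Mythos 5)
Your proposal is correct in substance and reaches \eqref{ess-spec-Lapl1} by a route that differs from the paper's in its operator-theoretic scaffolding, although both arguments share the IMS localization \eqref{IMS-form} as their core. The paper disposes of the compact core and the cut-off errors at the \emph{resolvent} level: it shows that $R(-\Lap_{a^b}+1)^{-2}$ is compact and invokes the Weyl-type theorem for relatively compact perturbations (\propref{mWeyl}) to conclude $\s_{\rm ess}(-\Lap_{a^b})=\s_{\rm ess}(\sum_i T_i)$; only then does it compare each $T_i$ with the Dirichlet operator $-\Lap_{a^b}\big|_{U_i}$ via Weyl sequences, and it obtains the remaining inclusion by ``running the argument backwards''. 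You instead work with singular sequences throughout, splitting the equality into two inclusions: $\bigcup_i\s_{\rm ess}(-\Lap_{a^b}\big|_{U_i})\subseteq\s_{\rm ess}(-\Lap_{a^b})$ by zero-extension of Dirichlet Weyl sequences (cleaner and more explicit than the paper's ``backwards'' step), and the reverse inclusion by IMS plus Rellich--Kondrachov on the compact core. What the paper's route buys is that all compactness is packaged once into an abstract theorem, so no singular sequence has to be tracked through the localization; what your route buys is elementariness (no resolvent manipulations or two-sided-ideal arguments) and a sharper picture of where mass can and cannot concentrate.

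Two points in your sketch need tightening. First, in the easy direction you assert that a singular sequence for the Dirichlet realization on $U_i$ ``may be taken'' smooth and compactly supported in the interior. This is not automatic: $C_c^\infty$ of the interior is \emph{not} an operator core for the Dirichlet Laplacian (its graph-norm closure is the minimal operator, a proper restriction of the Dirichlet realization), so graph-norm approximation is unavailable. The claim is nevertheless true here, and provable by the same tools as your hard direction: since $\p U_i$ is compact, a Dirichlet Weyl sequence cannot concentrate near $\p U_i$ (Rellich plus weak convergence to $0$), so one may cut off near the boundary, truncate each $u_n$ far into the cusp where its individual $L^2$-tail is small, and mollify. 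Note that some such argument is genuinely needed, because the zero extension of a general element of the Dirichlet domain fails to lie in the domain of $-\Lap_{a^b}$ on $\Si$ (jump of the normal derivative across $\p U_i$). Second, the vanishing of the first-order commutator term $[-\Lap_{a^b},\chi_i]u_n$ does not follow, as literally stated, from the uniform $\cH^1$ bound plus compactness of $\cH^1\hookrightarrow L^2$; that combination only kills the localized $L^2$ norm, while the commutator involves $\grad_{a^b}u_n$. You need either a uniform local $\cH^2$ bound (interior elliptic regularity applied to $(-\Lap_{a^b}-\lam)u_n\to0$) followed by compactness of $\cH^2\hookrightarrow\cH^1$ on the overlap region, or an integration-by-parts argument bounding the localized gradient norm by localized $L^2$ norms on slightly larger sets. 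Both fixes are standard, so these are gaps of precision rather than of substance.
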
 
\begin{proof} 
	1. First, note that the compact domain $U_0$ and the cuts do not contribute to the essential spectrum. Next,	let $\Set{U_0',\ldots,U_m'}$ be an open covering $\Si$,
	such that 
	\begin{enumerate}
		\item $U_i'\supset U_i$ for every $i=0,\ldots,m$;
		\item  $U_i'\setminus U_i$ is bounded in $F_\Si$ for every $i=0,\ldots,m$;
		\item $U_i'\cap U_j'=\emptyset$  for $1\le i\ne j\le m$.
	\end{enumerate}
For Item 3 above, we note that for sufficiently small $U_i,\,i=1,\ldots m$, we can always choose a slightly larger $U_i'\supset U_i $ s.th. $U_i'$ are still mutually disjoint for $1\le i\ne j\le m$ (see \figref{fig:decomp}).

	Let $\chi_i$ be a partition of unity on $\Si$ adapted to $U_i'$,
	such that
	\begin{align}
		\sum_{i=0}^m\chi_i^2&=\one,\\
		{\chi_i\vert_{U_i}\equiv 1},\quad \supp\chi_i&\subset  {U_i'}\quad (i=0,\ldots,m).\label{chi2}
	\end{align} 
	
	Then the IMS formula (see \cite{MR883643}) gives
	\begin{align}\label{IMS-form}  - \Delta_{a^b}&=\sum_{i=1}^m T_i +R,\ \quad T_i:= \chi_i(-\Delta_{a^b})\chi_i,\\
		\label{R}  R:&=\chi_0(-\Delta_{a^b})\chi_0 -\sum_{i=0}^m|\n\chi_i|^2.
	\end{align}

	2. Since $R$ is localized in a compact domain
	$$\supp \chi_0\cup \supp \grad \chi_1\cup\cdots \cup \supp \grad\chi_m,$$ we expect that  $-\Lap_{a^b}$
	and $\sum T_i$ have the same essential spectrum.  To show this, we will repeatedly use the fact that compact operators form a two-sided ideal among the bounded operators.
	
	By \thmref{thm3.1}, Part (b), the spectrum $\si(-\Lap_{a^b})\subset \Rb_{\ge0}$. Hence $(-\Lap_{a^b}+1)^{-1}$ is well-defined and bounded on $\cL^2$. 
	
	Claim: $R(-\Lap_{a^b}+1)^{-2}$ is a compact operator.

	Proof of the claim: Since for each $i=1,\ldots,m$,  the set $\supp \abs{\grad\chi_i}$ is bounded in $F_\Si$, it follows that the operators $|\n\chi_i|^2(-\Lap_{a^b}+1)^{-2}$ are all compact. Now, write
	\begin{align*}
		&\Lap_{a^b}\chi_0(-\Lap_{a^b}+1)^{-2}=BK,\\ &B:=\Lap_{a^b}(-\Lap_{a^b}+1)^{-1},\\ &K:=\chi_0(-\Lap_{a^b}+1)^{-1}-[\chi_0,\Lap_{a^b}](-\Lap_{a^b}+1)^{-2}.
	\end{align*}
	The operator $B$ is bounded on $\cL^2$. 
	Since $\chi_0$ has its support bounded $F_\Si$, the operator $K$ is a compact operator. It follows that $\Lap_{a^b}\chi_0(-\Lap_{a^b}+1)^{-2}=BK$ is compact. This proves the claim.

3. We will use a modified version of  Weyl's Theorem for relative compact perturbations:
\begin{proposition}[\cite{MR0493421} Chapt. XIII.4, Cor. 3]\label{mWeyl}
	Let $H$ and $W$ be two self-adjoint operators s.th. $W(H+1)^{-n}$ is compact for some $n\ge1$. Then $$\si_{\rm ess}(H+W)=\si_{\rm ess}(H).$$
\end{proposition}
We use this proposition with $W=R$ from \eqref{R} and $H=-\Lap_{a^b}$. Then it follows from the claim proved in Step 2 and \propref{mWeyl} that 
\begin{align}\label{ess-spec1}
	\s_{\rm ess}(-\Delta_{a^b})=\s_{\rm ess}\del{\sum_{i=1}^mT_i}.
\end{align}

4. Next, for $i=1,\ldots, m$, we denote by $-\Delta_{a^b}\big\vert_{U_i'}$ the restriction on $U_i'$ with Dirichlet boundary condition on $\di U_i'$.

Claim: 
\begin{equation}\label{3.102}
	\s_{\rm ess}\del{\sum_{i=1}^m T_i}\subset  \bigcup_{i=1}^m \s_{\rm ess}(-\Delta_{a^b}\big|_{U_i}).
\end{equation}

Proof of the claim: By construction, $U_i'\cap U_j'=\emptyset$ for $1\le i\ne j \le m$. Hence, one can show using Weyl's criterion that 
\begin{equation}\label{3.100}
	\si_{\rm ess}\del{\sum_{i=1}^m T_i}=\bigcup_{i=1}^m \si_{\rm ess}\del{T_i}.
\end{equation}

Next, we show that for  each $i=1,\ldots,m$, 
\begin{equation}\label{3.101}
	\si_{\rm ess}\del{T_i}= \si_{\rm ess}\del{-\Delta_{a^b}\big|_{U_i}}.
\end{equation}
Indeed, for each $i=1,\ldots,m$, we write \begin{equation}\label{Ti'}
	T_i =-\Delta_{a^b}\big|_{U_i}-\Delta_{a^b}\big|_{U_i'\setminus U_i} +K_i\quad  \text{ defined on the domain } D(\Delta_{a^b}\big|_{U_i'}),
\end{equation} where $K_i$'s are defined by this relation. Explicitly, 
\begin{equation}\label{Ki}
	K_i=(-1+\chi_i)\Lap_{a^b}\chi_i-\Lap_{a^b}(1-\chi_i)\quad\text{acting on }U_i'.
\end{equation}

 Take $\lam \in \s_{\rm ess}(T_i)$, and let $\{u_n\}$ be a Weyl sequence for $T_i$ and $\lam$, i.e. 
 \begin{equation}\label{weyl}
 	\|u_n\|_{L^2}=1,\quad u_n \ra 0\ \text{ weakly in $L^2$, }\, \|(T_i - \lam) u_n\|_{L^2} \ra 0.
 \end{equation} Since $\supp u_n\subset  U_i'$, each element in the sequence $\{u_n\}$  satisfies the Dirichlet boundary condition on $U_i'$. 

	By the choice of $\chi_i$, see \eqref{chi2}, the factor $1-\chi_i$ vanishes away from the bounded set $U_i'\setminus U_i$. Hence, it follows from \eqref{Ki}--\eqref{weyl} that $$K_i u_n\ra 0 \quad \text{strongly in $L^2$ as $n\to\infty$ for every }i=1\ldots,m.$$ 
	Similarly, we have 
$$-\Delta_{a^b}\big|_{U_i'\setminus U_i} u_n\ra 0 \quad \text{strongly in $L^2$ as $n\to\infty$ for every }i=1\ldots,m.$$
	Hence, we conclude from expansion \eqref{Ti'} and assumption \eqref{weyl} that  \begin{equation}\label{this}
\begin{aligned}
			&(-\Delta_{a^b}\big|_{U_i}-\lam) u_n\\=&(T_i-\lam) u_n - (K_i-\Delta_{a^b}\big|_{U_i'\setminus U_i}) u_n\ra 0\text{  strongly for each $i=1,\ldots,m$}.
\end{aligned}
	\end{equation}By Weyl's criterion, this implies $\lam \in \s_{\rm ess}(-\Delta_{a^b}\big|_{U_i})$. Thus \eqref{3.101} holds.

Eqs. \eqref{3.100}--\eqref{3.101}  imply  \eqref{3.102}. This proves the claim.
	
	5.
	Running the argument from Step 4 backwards gives the other inclusion, i.e. 
\begin{equation}\label{ess-spec2}
			\si_{\rm ess} \del{\sum T_i}= \si_{\rm ess}\del{ \sum -\Delta_{a^b}\big|_{U_i}}.
\end{equation}
	 Relations \eqref{ess-spec1} and \eqref{ess-spec2} together imply \eqref{ess-spec-Lapl1}. 
\end{proof}

\begin{lemma}\label{lem:ess-spec-Lapl2}For every $i=1,\ldots,m$, we have $$\s_{\rm ess}(-\Delta_{a^b}\big|_{U_i})\subset[1/4+b^2, \infty).$$ \end{lemma}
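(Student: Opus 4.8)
\emph{Proof plan for \lemref{lem:ess-spec-Lapl2}.} The plan is to trivialize the cusp geometry by the scaling isometry, reduce $-\Delta_{a^b}\big|_{U_i}$ to a model operator on the standard half-cylinder $Z_i=\Set{z\in\C:\Im z>s_i}/\Z$, and then separate variables. First I would use the map $\varphi_i$ of \eqref{varphi}: since $\varphi_i$ preserves the hyperbolic metric and carries $U_i$ isometrically onto $Z_i$, pulling back by $\varphi_i$ conjugates $-\Delta_{a^b}\big|_{U_i}$ to $-\Delta_{\tilde a}\big|_{Z_i}$, where $\tilde a:=(\varphi_i^{-1})^*a^b$ is again a constant curvature connection with $*F_{\tilde a}=b$ (since $*F$ is a scalar invariant under isometries). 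By the classification of constant curvature connections (\thmref{thmB.2}) and the automorphy-factor computation in Appendix~\ref{sec:aut-fact-ccc}, $\tilde a$ is gauge-equivalent to the canonical form $a^b=by^{-1}dx$ up to a flat connection encoding the holonomy around the cusp. Absorbing the gauge factor by a unitary multiplication operator makes $-\Delta_{a^b}\big|_{U_i}$ unitarily equivalent to the Dirichlet restriction to $Z_i$ of the canonical operator \eqref{3.7}, possibly with a twisted periodicity $u(x+1)=e^{i\theta}u(x)$ (which, as noted below, is harmless).

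Next I would diagonalize the periodic $x$-direction. Writing $u=\sum_{n\in\Z}u_n(y)e^{2\pi inx}$ and recalling that the $L^2$-measure is $y^{-2}\,dx\,dy$, the canonical operator \eqref{3.7} block-diagonalizes into $\bigoplus_{n\in\Z}L_n$, where
\[
L_n=-y^2\di_{yy}+(2\pi n\,y-b)^2\quad\text{on }L^2\del{(s_i,\infty),\,y^{-2}\,dy},
\]
each carrying a Dirichlet condition at $y=s_i$. The substitution $y=e^t$ together with the unitary $f\mapsto e^{-t/2}f(e^t)$ flattens the measure and turns $-y^2\di_{yy}$ into $-\di_{tt}+\tfrac14$, so that $L_n$ is unitarily equivalent to
\[
\widehat L_n=-\di_{tt}+\tfrac14+(2\pi n\,e^t-b)^2\quad\text{on }L^2\del{(\log s_i,\infty),\,dt},
\]
a half-line Schr\"odinger operator with Dirichlet condition at the finite endpoint $t=\log s_i$.

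The spectral analysis is then immediate. For $n=0$ the potential is the constant $b^2$, so $\widehat L_0=-\di_{tt}+\tfrac14+b^2$ has $\s_{\rm ess}=[\tfrac14+b^2,\infty)$; for $n\neq0$ the potential $(2\pi n\,e^t-b)^2\to+\infty$ as $t\to\infty$, so $\widehat L_n$ has compact resolvent and purely discrete spectrum. Taking $s_i\gg1$ so that $(2\pi n\,y-b)^2\ge b^2$ for all $y>s_i$ and all $n$, and using $-y^2\di_{yy}\ge\tfrac14$, one gets the uniform bound $L_n\ge\tfrac14+b^2$ for every $n$. Hence $-\Delta_{a^b}\big|_{U_i}=\bigoplus_nL_n\ge\tfrac14+b^2$, which yields $\s\del{-\Delta_{a^b}\big|_{U_i}}\subset[\tfrac14+b^2,\infty)$ and a fortiori the claimed inclusion for the essential spectrum; in particular the discrete eigenvalues of the confining blocks $n\neq0$ march off to $+\infty$ and cannot accumulate below the threshold.

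The main obstacle is the first step: identifying $-\Delta_{a^b}\big|_{U_i}$ with the canonical model on $Z_i$, which requires controlling the gauge transformation relating $\tilde a$ to $a^b$ near the cusp via Appendix~\ref{sec:aut-fact-ccc}. Once this reduction is in place the remaining modal analysis is routine. Finally, a nontrivial holonomy $\theta$ merely shifts the frequencies to $2\pi n+\theta$ in $L_n$; the zero-frequency mode (if present) still gives $[\tfrac14+b^2,\infty)$ while every other mode remains confining, so the inclusion holds verbatim.
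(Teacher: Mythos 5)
Your proposal is correct and follows essentially the same route as the paper: isometric reduction of $U_i$ to the half-cylinder $Z_i$ via $\varphi_i$, gauge reduction to the canonical form \eqref{3.7}, Fourier decomposition in the periodic variable, and a uniform lower bound $1/4+b^2$ on each mode (with $s_i\gg1$ needed, in both arguments, to make the $n\neq0$ potentials dominate $b^2$). The only cosmetic difference is how the radial bound is obtained -- you use the logarithmic substitution $y=e^t$ with the weight $e^{-t/2}$ to turn $-y^2\partial_{yy}$ exactly into $-\partial_{tt}+\tfrac14$, whereas the paper conjugates by $y^{-1}$ and invokes Hardy's inequality on the half-line; these are equivalent, and your explicit handling of possible cusp holonomy (twisted periodicity) is a sound extra precaution that the paper absorbs into its ``initial gauge transform.''
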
 
\begin{proof} 
By construction, the map $\varphi_i$ in \eqref{varphi} maps
$U_i$ isometrically onto the half-cylinder $Z_i$ in \eqref{Xc}.
This transformation maps $-\Delta_{a_b}\big|_{U_i}$ unitarily to another operator, say $h_i$, acting on $L^2(Z_i)$ with the Dirichlet boundary conditions on $\p Z_i=\{z\in \C: \Im z=s_i\}/\Z$. Thus, we have
\begin{align}\label{ess-spec-Lapl3} \s_{\rm ess}(-\Delta_{a^b}\big|_{U_i})\subset \s_{\rm ess}(h_i).\end{align}
It remains to compute the spectra of $h_i$.

First, we note that
by the invariance of the magnetic Laplacian, see \eqref{3.8}, 
the spectral properties of $-\Lap_{a^b}$ are not affected 
under any isometric transform $\g\in SL(2,\Rb)$
acting on $\Hb$. Hence, 
up to an initial gauge transform, we can assume
$h_i$ is of the form \eqref{3.7}, 
now acting on the half-cylinder $Z_i$.

Next, we pass from $h_i$ to another unitary equivalent operator 
$$p:=y^{-1}h_i y =-y (\p_x^2 + \p_y^2)y +2i b y\p_x+b^2$$  acting on $L^2(Z_i, d x d y)$, with the Dirichlet boundary conditions on $\p Z_i$.
Then
\begin{align}\label{ess-spec-Lapl4}  \s_{\rm ess}(h_i)=\s_{\rm ess}(p).\end{align}
Now, we apply the Fourier transform in $x$ for the operator $p$ to obtain the decomposition 
$$p =\bigoplus_{k\in \Z} p_k^b,$$
where $p_k^b$ is the  operator \begin{equation}\label{pkb}
	p_k^b : =y (- \p_y^2 + k^2)y  - 2 b y k+b^2
\end{equation}
 acting on $L^2(\R_s)$, with $\R_s:=\{y\in \R: y>s\}$. 

For $k\ne0$, \eqref{pkb} can be written as $$p_k^b : =-y \p_y^2 y + k^2y^2  - 2 b y k+b^2=-y \p_y^2 y + k^2 (y  -  b/k)^2,$$ and therefore we have the estimate
\begin{equation}\label{3.14}
	p_k^b\ge p_0^b\quad (k\ne0).
\end{equation}
By \eqref{3.14}, we have $$\si(p_k^b)\subset [\si^b,\infty),\quad \si^b:=\inf\si(p_0^b\vert_{\Rb_s}),$$
and therefore
 \begin{equation}\label{rel-p}
	\s (p)=\bigcup_{k\in \Z} \s (p_k^b\big|_{\R_s})\subset [\si^b,\infty).
\end{equation}  
Furthermore, by Hardy's inequality
$$- \p_y^2\big|_{\R_s} - \frac14\frac{1}{y^2}\ge 0,$$
we have the lower bound
$$p_0^b : =-y \p_y^2 y+b^2\ge \frac14+b^2.$$
This shows that $\si^b\ge 1/4+b^2$, and therefore by \eqref{rel-p}, it follows that 
$\s(p)\subset[1/4+b^2, \infty)$.
This, together with relations \eqref{ess-spec-Lapl3}--\eqref{ess-spec-Lapl4}, proves the lemma. \end{proof}

\begin{proof}[Proof of  Theorem \ref{thm3.1}, Part (d)] 
Lemmas \ref{lem:ess-spec-Lapl1} and \ref{lem:ess-spec-Lapl2} together imply 
\begin{equation}\label{inclusion1}
	\s_{\rm ess} (-\Lap_{a^b})\subset [1/4+b^2, \infty).
\end{equation}
{Here we note that as far as the bifurcation argument in \secref{sec:4} is concerned, a lower bound for the essential spectrum such as \eqref{inclusion1} suffices for our purposes.}

To prove the  inclusion  \begin{align}\label{inclusion2}
	\s_{\rm ess} (-\Lap_{a^b})\supset [1/4+b^2, \infty),
\end{align} we compute the generalized eigenfunctions of  $-\Delta_{a^b}$ in the space of square-integrable equivariant functions. {The details are delegated to the end of Appendix~\ref{sec:2.4}.}

%
%
%
%

Combining \eqref{inclusion1}--\eqref{inclusion2} proves 
Part (d) of \thmref{thm3.1}.
\end{proof}

{\begin{proof}[Proof of  Proposition \ref{prop:loc-min}]
Consider the Ginzburg-Landau energy \eqref{1.1}.
The Hessian of $\cE(\cdot, h_r)$ at $(0,a^{b_r})$ is given by
		\begin{equation}
			\label{dG}
	L:=		\diag(-\Lap_{a^{b_r}}-\kappa^2 , d^*d):X^s\to X^{s-2},
		\end{equation}
	which can be computed as in \eqref{4.3}.
		By Theorem \ref{thm3.1}(b) and rescaling, $-\Lap_{a^{b_r}}\ge b_r$. 
Thus,
		\begin{align}\label{balge}
				-\Lap_{a^{b_r}}-\kappa^2\ge0\iff b_r-\kappa^2\ge0 \iff b_r \ge \kappa^2.
		\end{align}
		Next, by Proposition \ref{prop3.1}, $d^*d\ge 0$ on $\vec\cH^s$, $ s\ge2$. This, together with \eqref{balge} and formula \eqref{dG}, implies
		\begin{align}
			\label{dG-posit}
			L\ge 0\ \text{ if and only if }\ b_r\ge \kappa^2.
		\end{align}
		This implies Proposition \ref{prop:loc-min}. \end{proof}}

\section{Bifurcation Analysis} \label{sec:4}
In the previous sections, we have found that, for the constant curvature connection $a^b$
on the unitary line bundle $E$ over a Riemann surface $\Si$,
the ground state energy of the magnetic Laplacian
$-\Lap_{a^b}$ equals to $b$.
Moreover, 
the parameter $b$ is determined by the degree of $E$ and the signature of $\Si$.

{
Let $n=\deg E$. Recall that $\abs{\Si}$ is the area of $\Si$ w.r.t.~the standard hyperbolic metric, and $b=2\pi n /\abs{\Si}$ is the critical value of the average field strength. 
In this section, we construct 
solutions to the rescaled GL equation, \eqref{2.13},
for scaling parameter  $r$ close to $ b/\kappa^2$. 
This emerging solution corresponds to a second order phase transition as the applied field strength is lowered past the critical value $b=2\pi n /\abs{\Si}$.

We follow the general approach of \cite{MR4049917}. 
Here we note two additional difficulties specific to our situation: 
}
\begin{enumerate}[label=(\alph*)]
	\item If $b=1/2$, e.g. when $\deg E=1$ and $\Si=\Hb/\Ga(3)$ (in which case $\abs{\Si}=4\pi$ by the Gauss-Bonnet formula \eqref{2.8N}),
	then ground state energy  $b$ is embedded in the essential spectrum of $-\Lap_{a^b}$.

	\item In general, by \thmref{thm3.1}, Part (c), the lowest eigenvalue of $-\Lap_{a^b}$
	is not simple. 
\end{enumerate}

{These lead to subtle technical issues due to bifurcations with higher multiplicity and bifurcation from essential spectrum. We sidestep those issues here by assuming that $\dim(-\Lap_{a^b}-b)=1$ and $b\ne 1/2$ (see \eqref{b0cond}),}
{and explain how to overcome the second one in \secref{secExt}.}

{An explicit example satisfying \eqref{Kcond} is given in \eqref{1.6'}.}
{To see this, we note that by \thmref{thm3.1}, condition
	\eqref{Kcond} holds if $b\ne 1/2$ and the complex 
	vector space $\Sc_k(\Si)$ of cusp forms on $\Si$ with
	weight $k=2b=4\pi\deg E/\abs{\Si}$
	is one-dimensional. Let $\Si=\Hb/\Ga(N),\,N\ge2$ be
	a non-compact Riemann surface,
	where $\Ga(N)$ is the principal
	congruence subgroup of level $N$ (see \secref{sec:2.1} for definitions). 
	Using classical dimension formulae
	found in \cite{MR0314766}*{Secs. 1.6, 2.6},
	we find that 
	if $k$ is even, i.e. $b\in\Z$, then 
	\begin{equation}\label{dimSk}
		\dim (\Sc_k(\Si))
		=\left\{\begin{aligned}
			&(k-1)(g_N-1)+\frac{km_N}{2}\quad (k=4,6,\ldots),\\
			&g_N\quad (k=2),
		\end{aligned}\right.
	\end{equation}
	where $g_N$ (resp. $m_N$) is the number of genera (resp. distinct cusps) of $\Si$, 
	given by \eqref{2.5}--\eqref{2.6}. We seek integer solution $(k,N)$ to the equation
	\begin{equation}\label{D.7}
		\dim (\Sc_k(\Si))=1.
	\end{equation}
For $N\ge3,\,k=2$,  \eqref{D.7} reduces to 
	\begin{equation}
		\label{D.8}
		\frac{N-6}{12}m_N=0.
	\end{equation}
	The only solution to \eqref{D.8} is $N=6$ and, for $k=2$, this gives $\deg E=12$.}
	

\subsection{Setup}
Recall $X^s=
\H^s\times \vec\H^s$ is the space of (section, connection)-pairs of order $s$,
defined in \eqref{X},
and by convention $X^0=\Lc^2\times \vec \Lc^2$.

Define a nonlinear map $F$ as
\begin{equation}
	\label{4.1}
	\fullfunction{F}{X^s\times \Rb}{X^{s-2}}{(\psi,\alpha,r)}{\del{-\Lap_{a^b+\alpha}\psi +\kappa^2(\abs{\psi}^2-r)\psi,
			d^*d \alpha-PJ(\psi,\alpha)}},
\end{equation}
where
\begin{align}
	&J(\psi,\alpha):=\Im(\bar\psi\grad_{a^b+\alpha}\psi) \text{ is the r.h.s. 
of the second equation in \eqref{GL},}\label{J}\\
&P: \vec \cH^s\to \vec \cH^s\text{ is the projection onto the space of co-closed $1$-forms}.\label{P}
\end{align}
The map $F$ is the central object in the remaining sections.

\begin{remark}\label{rem4.1}
By   definition \eqref{4.1} and direct computation, one finds   that the rescaled equation
	\eqref{2.13} has solution $\del{\psi, a^b+\alpha,r}$ if and only if 
	\begin{equation}
		\label{4.2}
		F(\psi,\al,r)=0,
	\end{equation}
	and $J(\psi,\al)$ is co-closed. The latter holds
	so long as the {first GL equation in \eqref{2.13}} holds. 
	A proof of this fact is found in \cite[{Prop.5.1}]{MR4049917}. {Hence 
	it suffices to consider \eqref{4.2} only. }
\end{remark}

Next, by \eqref{3.1}, eq.~\eqref{4.2} has the trivial solution $(\psi,\al,r)=(0,0,b/\kappa^2)$:
\begin{align}\label{4.4s}
	F(0,0,b/\kappa^2)=0,
\end{align}
Below, we seek non-trivial solutions to \eqref{4.2}, i.e.~solutions to \eqref{4.2} with variable curvature, in a small neighbourhood around the trivial one. We write \begin{align}\label{4.5s}
	u:=(\psi,\alpha)\in X^s.
\end{align}
{Recall also that $K=\Null(-\Lap_{a^b}-b)$, c.f. \eqref{3.9},} and the critical threshold $\kappa_c (r)$ is defined in \eqref{kappac}.

The main result of this section is the following:

\begin{proposition}
	\label{thm4.1}
	Suppose{ $b\ne1/2$, $\dim K=1$,}  
	{and $r>0$ satisfies (c.f.~\eqref{1.5})
\begin{align} 
	\label{1.5'}  
&0<\abs{\kappa^2r-b}\ll1. 
\end{align}}
%
%
Then there exists {{a family  of non-trivial solutions }}
$$(u_{s},r_s),\qquad s\in\Rb,\ 0<\abs{s}\ll1,$$ to \eqref{4.2} in a small neighbourhood
of $(0,b/\kappa^2)$ in $X^k\times \Rb_{>0}$, $k\ge2$.

{The solution $u_s$ is unique, up to a  gauge symmetry, in a small neighbhourhood $U\subset X^k$ around $0$.

	Furthermore, $r_s$ has the following expansion,
	\begin{align}
		\label{4.11s}
		r_s=b/\kappa^2 + O(\abs{s}^2),
	\end{align}
and similarly for derivatives.
}

\end{proposition}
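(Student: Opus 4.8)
The plan is to carry out a Lyapunov--Schmidt reduction for the map $F$ of \eqref{4.1} about the trivial solution \eqref{4.4s}, using the spectral description from \thmref{thm3.1}. First I would compute the Fr\'echet derivative $L:=D_uF(0,0,b/\kappa^2)$, which by the calculation behind \eqref{dG} is the diagonal operator $\diag(-\Lap_{a^b}-b,\,d^*d):X^k\to X^{k-2}$. Its kernel splits as $K\oplus \Om$, where $K=\Null(-\Lap_{a^b}-b)$ is complex one-dimensional by hypothesis and $\Om$ is the space of harmonic $1$-forms from \propref{prop3.1}. The assumption $b\ne 1/2$ enters decisively here: by \remref{remEmb} it is equivalent to the strict inequality $b<1/4+b^2$ (indeed $1/4+b^2-b=(b-1/2)^2$), so the eigenvalue $b$ lies strictly below $\s_{\rm ess}(-\Lap_{a^b})$ and is therefore isolated of finite multiplicity; consequently $-\Lap_{a^b}-b$ is boundedly invertible on the orthogonal complement $K^\perp$, which is what makes the reduction well posed.

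Next I would handle the gauge symmetry \eqref{gaugetransf}. Since $F$ is only equivariant under the $U(1)$-action, the infinitesimal gauge direction $i\xi$ already lies in $K$; working in the Coulomb gauge built into $\vec\cH^k$ (co-closed forms) and fixing the overall phase reduces the genuine kernel to a single real parameter $s$. Fixing a generator $\xi$ of $K$ normalized by $\br{\abs{\xi}^2}=1$, I seek $\psi=s\xi+\phi$ with $\phi\perp_{\cL^2}\xi$ and $\al\in\vec\cH^k$. The Maxwell component $d^*d\al=PJ(\psi,\al)$ is solved first: since $J=O(\abs{s}^2)$, the implicit function theorem applied to $d^*d$ on $\Om^\perp$ yields $\al=\al(s)=O(\abs{s}^2)$, consistent with \eqref{1.9} and with the curvature relation \eqref{1.11} (whose solvability is exactly the normalization $\br{\abs{\xi}^2}=1$). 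Substituting this back into the section equation and projecting onto $K^\perp$, the same invertibility and a second application of the implicit function theorem give $\phi=\phi(s,r)=O(\abs{s}^3)$.

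It then remains to solve the scalar \emph{bifurcation equation} obtained by pairing the section equation with $\xi$, i.e.\ projecting onto $K$; this is the key equation \eqref{4.19}. Using self-adjointness and $(-\Lap_{a^b}-b)\xi=0$, its leading terms are
\[
(b-\kappa^2 r)\,s\,\br{\abs{\xi}^2}+c\,s^3+O(\abs{s}^5)=0,
\]
where $c$ collects the cubic self-interaction $\kappa^2\br{\abs{\xi}^4}$ together with the coupling through $\al$ (this $c$ is computed precisely in \secref{sec:5} to yield \eqref{s}). For $s\ne0$ I divide by $s$ and obtain $G(s,r)=0$ with $G(0,b/\kappa^2)=0$ and $\di_r G(0,b/\kappa^2)=-\kappa^2\br{\abs{\xi}^2}=-\kappa^2\ne0$; the implicit function theorem then solves uniquely for $r=r_s$. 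Because the reduced equation is odd in $s$ (equivalently invariant under $s\mapsto-s$ once the phase is fixed), only even powers of $s$ survive, so $r_s=b/\kappa^2+O(\abs{s}^2)$ together with the corresponding bounds on its derivatives. The asserted uniqueness up to gauge in a small $U\subset X^k$ is precisely the uniqueness clause of the implicit function theorem on $K^\perp$ combined with the one-dimensionality of the reduced problem.

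The main obstacle I expect is functional-analytic rather than formal, and it is forced by the non-compactness of $\Si$: one must show that $-\Lap_{a^b}-b$ is boundedly invertible on $K^\perp$ with constants uniform enough to run the implicit function theorem in the $X^k$-topology, which is where the sharp location of $\s_{\rm ess}$ in \thmref{thm3.1}(d)---not merely the lower bound $-\Lap_{a^b}\ge b$---is needed, and one must verify the solvability condition $PJ\perp\Om$ for the Maxwell equation at each order. Handling the essential spectrum and the cusps carefully is what distinguishes this argument from its compact-surface counterpart in \cite{MR4049917}.
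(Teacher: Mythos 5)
Your overall strategy---Lyapunov--Schmidt about the trivial solution, invertibility of $-\Lap_{a^b}-b$ on $K^\perp$ from $b\ne1/2$ (indeed $1/4+b^2-b=(b-1/2)^2$), a scalar bifurcation equation solved by the implicit function theorem, and gauge symmetry forcing evenness of $r_s$ in $s$---is the same as the paper's. But there is a genuine gap in how you treat the Maxwell component. The kernel of the linearization is $N=K\times\Om$, where $\Om$ is the space of $L^2$ harmonic $1$-forms (\propref{prop3.1}), a finite-dimensional but generically \emph{nonzero} space. Your ansatz freezes the harmonic part of $\al$ to zero and solves $d^*d\al=PJ$ only on $\Om^\perp$; the remaining $\dim\Om$ equations, namely the $\Om$-projection $\inn{\eta_k}{PJ(\psi,\al)}=0$ (note $\inn{\eta_k}{d^*d\al}=0$ automatically since $d\eta_k=0$), are left as a ``solvability condition to verify at each order.'' This condition cannot simply be verified: co-exactness of the leading current $J(\xi,0)$ kills it only at order $\abs{s}^2$, while at order $\abs{s}^4$ the contribution $-\abs{s}^2\abs{\xi}^2\al$ has a nonvanishing $\Om$-projection in general, and with the harmonic part of $\al$ frozen there is no freedom left in your ansatz to cancel it. You are therefore solving strictly fewer equations than \eqref{4.2} contains.

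The paper closes exactly this hole by enlarging the finite-dimensional unknowns: $v=(s\xi,\sum_k t_k\eta_k)$ with $t\in\Rb^{\dim\Om}$, so the bifurcation system consists of \eqref{4.26} \emph{together with} the equations \eqref{4.27}, $\sum_l B_{kl}t_l+R_{\al,k}=0$, where $B_{kl}=\inn{\eta_k}{\abs{\xi}^2\eta_l}$ as in \eqref{B1}. Positive-definiteness of $B$ (see \eqref{Best}) makes these solvable by the implicit function theorem, yielding $t=O(\abs{s}^2)$ (\lemref{thmD.1}); it is precisely this harmonic correction, entering the current through the term $-\abs{s}^2\sum_k t_k\abs{\xi}^2\eta_k$ in \eqref{Jexp'}, that enforces the $\Om$-projected Maxwell equation, and only after it is in place does the scalar equation in $r$ go through as you describe. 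Two secondary corrections: the paper remarks after \eqref{inclusion1} that only the \emph{lower bound} on $\s_{\rm ess}(-\Lap_{a^b})$ is needed for the bifurcation argument, not its sharp location as you claim; and your sign $\di_rG=-\kappa^2$ versus the paper's $+\kappa^2$ is an immaterial difference of convention.
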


The remainder of this section is devoted to the proof of \propref{thm4.1}.

First, we summarize the key properties of the  map
$F(u,r)$ from \eqref{4.1} in the following proposition.
Below, we identify $\cH^s$ with a real Banach space through $\psi \leftrightarrow (\Re \psi,\Im \psi)$ and view $F: X^s\times \Rb\to X^{s-2}$ as a map between real Banach spaces.
{From now till the end of this paper, we assume 	$s\ge2$ in \eqref{4.1}.}
\begin{lemma}
	\label{prop4.1}
We have
	\begin{enumerate}
		\item The map $F$ is $C^2$ from the real Banach spaces $X^s\times \Rb$ to $X^{s-2}$, $s\ge2$;
		\item {$F$ has gauge symmetry in the sense that for every $\theta\in\Rb$, we have
		\begin{align}\label{invar}
			[F(e^{i\theta}s,t,r)]_\psi =e^{i\theta}[F(s,t,r)]_\psi,\quad 	[F(e^{i\theta}s,t,r)]_\al=[F(s,t,r)]_\al;
		\end{align}}
		\item $F(u,r)=0$ has the trivial branch of solution $(0,r),\,r>0$.
	\end{enumerate}
\end{lemma}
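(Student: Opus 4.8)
The three assertions are of rather different character: parts (2) and (3) are immediate algebraic verifications, while part (1) carries the analytic content. The plan is therefore to dispatch (2) and (3) by direct substitution into \eqref{4.1}, and to prove (1) by exhibiting $F$ as a finite sum of a fixed bounded linear operator, a bounded bilinear term in $(r,\psi)$, and bounded multilinear maps between the relevant Sobolev spaces, each of which is automatically $C^\infty$.

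For part (1), I would first expand the connection-dependent terms. Writing $\grad_{a^b+\al}\psi=\grad_{a^b}\psi+i\al\psi$ and using $-\Lap_{a}=\grad_a^*\grad_a$, one finds that $-\Lap_{a^b+\al}\psi$ equals $-\Lap_{a^b}\psi$ plus terms bilinear in $(\al,\psi)$ with at most one derivative on each factor, plus the quadratic term $\abs{\al}^2\psi$; a short computation likewise gives
\begin{equation}
	J(\psi,\al)=\Im(\b\psi\,\grad_{a^b}\psi)+\abs{\psi}^2\al .
\end{equation}
Thus the piece $(\psi,\al)\mapsto(-\Lap_{a^b}\psi,\,d^*d\al)$ is a fixed bounded linear operator $X^s\to X^{s-2}$, the term $-\kappa^2 r\psi$ is a bounded bilinear map of $(r,\psi)$, and every remaining summand is a bounded multilinear map of its arguments and their first derivatives. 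To establish boundedness I would invoke the standard multiplication estimates for the Sobolev spaces of \appref{sec:A}: since $\Si$ is a surface and $s\ge2>\dim\Si/2$, the space $\cH^s$ is a Banach algebra and the covariant operators $\grad_{a^b}\colon\cH^s\to\cH^{s-1}$, $\grad_{a^b}^*\colon\cH^{s-1}\to\cH^{s-2}$ are bounded, so each product lands in $\cH^{s-2}$ (resp.\ $\vec\cH^{s-2}$ after applying the bounded projection $P$). Because a bounded multilinear map between Banach spaces is $C^\infty$ and finite sums and compositions of $C^\infty$ maps are $C^\infty$, $F$ is $C^\infty$, in particular $C^2$.

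Part (2) I would verify by noting that the gauge factor $e^{i\theta}$ here is a \emph{constant}, so $d e^{i\theta}=0$, whence $\grad_{a^b+\al}(e^{i\theta}\psi)=e^{i\theta}\grad_{a^b+\al}\psi$ and $\abs{e^{i\theta}\psi}^2=\abs{\psi}^2$. Substituting into \eqref{4.1} pulls a single factor $e^{i\theta}$ out of the first component, while $J(e^{i\theta}\psi,\al)=\Im(\overline{e^{i\theta}\psi}\,e^{i\theta}\grad_{a^b+\al}\psi)=J(\psi,\al)$ leaves the second component unchanged; this is exactly the infinitesimal form of the gauge invariance recorded in the footnote to \eqref{gaugetransf}. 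Part (3) follows by setting $(\psi,\al)=(0,0)$: every term in both components of \eqref{4.1} carries at least one factor of $\psi$ or $\al$, so $F(0,0,r)=0$ for every $r>0$.

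The main obstacle is the boundedness bookkeeping in part (1): one must check that the derivative counts in each multilinear summand are compatible with the target $X^{s-2}$ and that the multiplication estimates remain valid on the non-compact, finite-volume surface $\Si$ with its cusps. Once the Banach-algebra property of $\cH^s$ ($s\ge2$) from \appref{sec:A} is in hand, the smoothness is purely formal; the only point requiring genuine attention relative to the compact case is confirming that the function-space framework of \appref{sec:A} supports these estimates.
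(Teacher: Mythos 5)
Your proposal is correct and follows essentially the same route as the paper: part (1) is proved there by observing that $F$ is a polynomial in $\psi$, $\bar\psi$, $\al$ and their derivatives combined with Sobolev multiplication estimates (which you make explicit via the fact that bounded multilinear maps between Banach spaces are $C^\infty$), and parts (2) and (3) are, as in the paper, direct substitutions into \eqref{4.1} and a restatement of \eqref{4.4s}. The only cosmetic discrepancy is your sign convention $\grad_{a^b+\al}\psi=\grad_{a^b}\psi+i\al\psi$, whereas the paper uses $\grad_a=\grad-ia$ (so $J(\psi,\al)=\Im(\bar\psi\grad_{a^b}\psi)-\abs{\psi}^2\al$, cf.\ \eqref{Jexp}); this does not affect any of the three claims.
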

\begin{proof}
{	The first claim follows from the fact that the map $F$ is a polynomial in $\psi$, $\bar\psi$, $\al$ and their derivatives up to the second order, together with the Sobolev inequalities and properties of fractional derivatives. }
	{The second claim follows directly from definition \eqref{4.1}.
	The last claim is a rephrasing of \eqref{4.4s}.}
\end{proof}

Next, consider the linearized operator of $F$ at the trivial branch 
$(u,r)=(0,r)$, given explicitly by (c.f. \eqref{3.2}--\eqref{3.3})
\begin{equation}
	\label{4.3}
	d_uF(0,r)=\diag(-\Lap_{a^b}-\kappa^2r,d^*d):X^s\to X^{s-2}.
\end{equation}
{For fixed $r$, this map is well-defined as the partial Fr\'echet derivative of $F$ at $u=0$. Moreover,  $d_uF(0,r)$ is continuous from $X^s\to X^{s-2}$ for $s\ge2$. }

The operator \eqref{4.3} enters the l.h.s.~of the linearized equations \eqref{3.2}--\eqref{3.3}.
Put 
\begin{equation}\label{4.4}
	N_r:=\Null d_uF(0,r).
\end{equation}
By Proposition \ref{prop3.1} and \corref{cor:Delta_A-part_A}, we have
\begin{equation}
	\label{4.4'}
	N\equiv N_{b/\kappa^2}=K\times \Omega,
\end{equation}
where $\Om$ is given in \eqref{3.5} and $K$ in \eqref{3.9}.

The goal now is to show that a non-trivial branch of solution to \eqref{4.2}
bifurcate from the space $N$ if $0<\abs{\kappa^2r-b}\ll1$.

\subsection{Lyapunov-Schmidt Reduction}\label{sec:4.2}
In this subsection, we  use the Lyapunov-Schmidt reduction to reduce the infinite-dimensional problem \eqref{4.2}
to a finite-dimensional one.

\DETAILS{At this point the geometry of the surface $\Si$ enters the picture.
Indeed, by \thmref{thm3.1}, Parts (b)-(d), the lowest eigenvalue of $-\Lap_{a^b}$
is embedded in the essential spectrum if and only if $b=1/2$.
Hence we must treat two cases separately depending on the value of $b$.}

Define a linear operator $Q: X^s\to X^s$ by
\begin{equation}
	\label{4.5}
	Q:=\left.\begin{aligned}
		&\frac{1}{2\pi i}\oint R(z)\,dz\oplus Q'.
	\end{aligned}\right.
\end{equation}
Here, 
the operator $Q':\vec\H^s\to \vec\H^s$ is the orthogonal projection onto $\Om$ defined in \eqref{3.5}, which can be identified with 
the space  of equivariant harmonic $1$-forms. 
$R(z)$ is the resolvant of $-\Lap_{a^b}-b$ at $0$,
which is well-defined    if $b\ne1/2$.

By construction, $Q$ given in \eqref{4.5}
is an orthogonal projection onto the space $N\equiv N_{b/\kappa^2}$ from \eqref{4.4'}.

Define $v=Qu,\,w=Q^\perp u$, where $Q^\perp=1-Q$ is the projection onto $N^\perp
\subset X^s$. {Then the key equation \eqref{4.2} is equivalent to the following two equations,}
\begin{align}
	QF(v+w,r)&=0, \label{4.6}\\
	Q^\perp F(v+w,r)&=0\label{4.7}. 
\end{align}

\begin{lemma}
	\label{prop4.2}{Suppose $b\ne1/2$.}	For every $(v,r)\in N\times \Rb_{>0}$ with 
$$\norm{v}_{X^s}+\abs{\kappa^2r-b}\ll1,$$
eq.~\eqref{4.7} has a unique solution $w=w(v,r)\in \ran Q^\perp\subset X^s$ which satisfies  \begin{equation}\label{4.12a}
	w=O(\|v\|_{X^s}^2),
\end{equation}
{and similarly for the derivatives of $w$.}

\end{lemma}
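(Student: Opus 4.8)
The plan is to solve \eqref{4.7} for $w$ as a function of $(v,r)$ by the implicit function theorem, and then to read off the quadratic bound \eqref{4.12a} from the algebraic structure of $F$. Concretely, I would regard \eqref{4.7} as the equation $G(w,v,r)=0$ for the map
\[
	G:\ (\ran Q^\perp\cap X^s)\times N\times\Rb\to \ran Q^\perp\cap X^{s-2},\qquad G(w,v,r):=Q^\perp F(v+w,r).
\]
By \lemref{prop4.1}, $F$ is $C^2$, hence so is $G$, and \eqref{4.4s} gives $G(0,0,b/\kappa^2)=0$. Using \eqref{4.3} together with $\kappa^2\cdot b/\kappa^2=b$, the partial derivative in $w$ at the base point is
\[
	d_w G(0,0,b/\kappa^2)=Q^\perp\, d_uF(0,b/\kappa^2)\big|_{\ran Q^\perp}=Q^\perp L_0\big|_{\ran Q^\perp},\qquad L_0:=\diag(-\Lap_{a^b}-b,\,d^*d).
\]

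The crux is to show that $L_0|_{\ran Q^\perp}$ is a boundedly invertible map from $\ran Q^\perp\cap X^s$ onto $\ran Q^\perp\cap X^{s-2}$, and I would argue blockwise. On the connection block, \propref{prop3.1} identifies $\Null d^*d=\Om$, and $d^*d$ is invertible on the orthogonal complement of $\Om$ within the co-closed forms; this is exactly what the projection $Q'$ of \eqref{4.5} encodes. On the section block, $L_0$ reduces to $-\Lap_{a^b}-b$, whose kernel is $K$ by \corref{cor:Delta_A-part_A}. Here the hypothesis $b\ne1/2$ is essential: by \thmref{thm3.1}(d) the essential spectrum begins at $1/4+b^2$, and since $1/4+b^2-b=(b-\tfrac12)^2>0$ precisely when $b\ne1/2$, the eigenvalue $b$ is \emph{isolated} below the essential spectrum. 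Hence $-\Lap_{a^b}-b$ is bounded below by a positive constant on $K^\perp$, which with elliptic regularity yields bounded invertibility on $K^\perp\cap\cH^s$; this is also the step at which the contour integral defining $Q$ in \eqref{4.5} is meaningful. I expect this to be the main obstacle, since on the non-compact $\Si$ one cannot invoke compactness of resolvents and must rely instead on the explicit spectral picture of \thmref{thm3.1}. With invertibility established, the implicit function theorem produces a unique $C^2$ solution $w=w(v,r)\in\ran Q^\perp$ for $\norm{v}_{X^s}+\abs{\kappa^2 r-b}$ small.

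Finally I would establish \eqref{4.12a}. Since $u\equiv0$ solves $F(0,r)=0$ for every $r$ by \lemref{prop4.1}, uniqueness forces $w(0,r)=0$. For the quadratic decay, write $F(u,r)=L_0u+M(u,r)$ with $M(u,r)=(b-\kappa^2 r)\Pi_\psi u+\tilde N(u,r)$, where $\Pi_\psi$ projects onto the section component and $\tilde N(u,r)=O(\norm{u}_{X^s}^2)$ gathers the genuinely nonlinear terms (the cubic $\kappa^2\abs{\psi}^2\psi$, the $\al$–$\psi$ coupling coming from $-\Lap_{a^b+\al}\psi+\Lap_{a^b}\psi$, and the quadratic supercurrent $PJ(\psi,\al)$). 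Applying $Q^\perp$ to \eqref{4.7} and using $L_0v=0$ gives
\[
	L_0\big|_{\ran Q^\perp}w=-Q^\perp\big[(b-\kappa^2 r)\Pi_\psi(v+w)+\tilde N(v+w,r)\big].
\]
The key cancellation is that for $v\in N=K\times\Om$ one has $\Pi_\psi v\in N$ (its section part lies in $K$, its connection part is $0\in\Om$), so $Q^\perp\Pi_\psi v=0$ and the term linear in $v$ drops out. Inverting and taking $X^s$-norms yields $\norm{w}\ls\abs{\kappa^2 r-b}\,\norm{w}+\norm{v+w}^2$; absorbing the first term for $\abs{\kappa^2 r-b}$ small and using that $\norm{w}$ is small then gives $\norm{w}_{X^s}\ls\norm{v}_{X^s}^2$, which is \eqref{4.12a}. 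The analogous estimates for the derivatives of $w$ follow by differentiating this fixed-point identity in $(v,r)$ and invoking the $C^2$ regularity furnished by the implicit function theorem.
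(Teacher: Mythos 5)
Your overall strategy coincides with the paper's: the implicit function theorem applied to $Q^\perp F(v+w,r)=0$, with invertibility of the block-diagonal linearization checked block by block, followed by an inversion/fixed-point argument for the quadratic bound. Your treatment of the $\psi$-block is exactly the paper's (for $b\ne 1/2$ one has $1/4+b^2-b=(b-\tfrac12)^2>0$, so $b$ is an isolated eigenvalue and $-\Lap_{a^b}-b$ is boundedly invertible on $K^\perp$), and your quadratic estimate --- expanding around $L_0$, using the cancellation $Q^\perp\Pi_\psi v=0$, and absorbing the $O(\abs{\kappa^2r-b})\norm{w}_{X^s}$ term --- is an equivalent, if anything more explicit, version of the paper's: the paper instead inverts $\bar L_{v,r}:=Q^\perp d_uF(v,r)Q^\perp$ by perturbation theory and bounds $Q^\perp F(v,r)$ componentwise, and the reason its section-component bound holds is precisely your cancellation, since $Q^\perp$ annihilates the linear-in-$v$ term $(b-\kappa^2 r)\phi$ with $\phi\in K$.

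There is, however, one genuine gap: the connection block. You assert that $d^*d$ is invertible on the orthogonal complement of $\Om$ within the co-closed forms, offering as justification only that ``this is exactly what the projection $Q'$ of \eqref{4.5} encodes.'' That is not a justification: $Q'$ being the orthogonal projection onto $\Null d^*d=\Om$ says nothing about a spectral gap. On the non-compact surface $\Si$, the very issue you flag for the $\psi$-block arises equally here --- triviality of the kernel of $d^*d$ on $\Om^\perp$ does not exclude $0$ from its \emph{essential} spectrum, and without that exclusion there is no bounded inverse and the implicit function theorem cannot be applied. The paper closes this point by invoking the lower bound for the essential spectrum of the Hodge Laplacian on $1$-forms over finite-volume hyperbolic surfaces, $\inf\sigma_{\rm ess}\del{d^*d\vert_{\Om^\perp}}\ge \tfrac14$, citing \cite[Prop.~5.1]{MR729759}. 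Your argument needs this (or an equivalent) input; once it is supplied, the remainder of your proof goes through.
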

\begin{proof}
1. We first prove the existence and uniqueness of solution to \eqref{4.7}.	 By \lemref{prop4.1} Parts {(1), (3)}, and the Implicit Function Theorem,
	 {eq.~\eqref{4.7} has a unique solution} $w=w(v,r)\in N$ in a small
	neighbourhood of $(v,r)=(0,b/\kappa^2)$ provided
	the partial Fr\'echet derivative
	$d_wQ^\perp F(0,b/\kappa^2):N^\perp\subset X^s\to X^{s-2}$ is invertible.

	The partial Fr\'echet derivative $d_wQ^\perp F$ evaluated at $(0,b/\kappa^2)$ is given by
	the diagonal operator 
	\begin{equation}
		\label{4.8}
		d_wQ^\perp F(0,b/\kappa^2)=Q^\perp\diag(-\Lap_{a^b}-b,d^*d).
	\end{equation}

	We first consider the $\psi$-component of this operator.
	
 {By assumption,} the ground state energy $b\ne1/2$.
	Then by \thmref{thm3.1}, Parts (c)-(d), 
	 $b$ is an isolated eigenvalue
		of $-\Lap_{a^b}$ (see \remref{remEmb}).
		In this case, by elementary spectral theory (e.g. \cite[{Thm. 6.7}]{MR1361167}), the operator $-\Lap_{a^b}-b$
		is  invertible on $K^\perp$. 

	 It remains to consider  the $\al$-component of the diagonal operator \eqref{4.8},
	namely $d^*d$.
	On the space of co-closed
	$1$-forms,
	$d^*d$ equals to the Hodge Laplacian. {By the lower bound on the essential spectrum of the Hodge Laplacian acting on $1$-forms, proved in e.g.~\cite[Prop.~5.1]{MR729759}), we have $\inf\si_{\rm{ess}}(d^*d\vert_{\Om^\perp})\ge \frac14$. Thus $d^*d\vert_{\Om^\perp}$ is invertible. }

	It follows that $d_wQ^\perp F(0,b/\kappa^2)$ is invertible on $N^\perp$ as desired.
	
2.		To prove estimate \eqref{4.12a}, consider the equation \eqref{4.7} satisfied by $w$,
	which we rewrite as
	\begin{equation}
		\label{5.7}
		Q^\perp F(v,r)+\b L_{v,r}w +N_{v,r}(w)=0,
	\end{equation}
	where
	$\b L_{v,r}:= Q^\perp d_uF(v,r) Q^\perp$, and $N_{v,r}(w)$ is the nonlinearity defined through
	this relation. 
	
	{The explicit expansion for $\bar L_{0,r}$ follows from \eqref{4.3}. By \propref{prop3.1} and \thmref{thm3.1}, the operator $\b L_{0,b/\kappa^2}\ge\delta Q^\perp$ for some $\delta>0$ and  is therefore invertible. By this fact and elementary
	perturbation theory, we find that 
	$\b L_{v,r}$ is invertible for
	\begin{align}
		\label{vInvCond}
		\norm{v}_{X^s}+\abs{\kappa^2r-b}\ll1,
	\end{align}}
	with 
	\begin{equation}\label{5.8}
		\norm{\b L_{v,r}^{-1}}_{X^{s-2}\to X^s}\ls 1  .
	\end{equation}
	Direct computation using the definition \eqref{4.1} of $F$ and the fact $v\in N$ (see \eqref{4.4}--\eqref{4.4'}) shows that 
	\begin{align}
		\label{5.9}
		\norm{[ F(v,r)]_\psi}_{\cH^{s-2}}&\ls \norm{[v]_\psi}_{\cH^s}(\norm{[v]_\al}_{\vec \cH^s}+\norm{[v]_\psi}_{\cH^s}^2),\\
		\label{5.10}
		\norm{[ F(v,r)]_\al}_{\vec \cH^{s-2}}&\ls \norm{[v]_\psi}_{\cH^s}^2( 1+\norm{[v]_\al}_{\vec \cH^s}),\\
		\label{5.11}
		\norm{N_{v,r}(w)}_{X^{s-2}}&\ls \norm{w}_{X^s}^2.
	\end{align}
	Here and below,
	for a  vector $u\in X^s$ we write $u=([u]_\psi,[u]_\al)$.

	Now we rewrite \eqref{5.7} as the fixed point equation 
	\begin{equation}
		\label{5.12}
		w=-\b L_{v,r}^{-1}( Q^\perp F(v,r)+ N_{v,r}(w)).
	\end{equation}
Applying  \eqref{5.9}--\eqref{5.11} to \eqref{5.12} and using triangle inequality,
		we find 
\begin{align}\label{6.15a}
	&\norm{[w]_\psi}_{\cH^s}\ls \norm{[v]_\psi}_{\cH^s}(\norm{[v]_\al}_{\vec \cH^s}+\norm{[v]_\psi}_{\cH^s}^2),\\
	\label{6.15b}
	&\norm{[w]_\al}_{\vec \cH^s}\ls  \norm{[v]_\psi}_{\cH^s}^2( 1+\norm{[v]_\al}_{\vec \cH^s}),
\end{align}
		provided $\norm{v}_{X^s}+\abs{\kappa^2r-b}\ll1$. 
	Estimates \eqref{6.15a}--\eqref{6.15b} give \eqref{4.12a}.
	{To obtain estimates on the derivatives of $w$, we differentiate \eqref{5.7} and then proceed with the resulting equation as above using estimates on $N_{v,r}(w)$ and its derivatives.}
\end{proof}

\subsection{The Bifurcation Equation}\label{sec:4.3}
In this section we prove \propref{thm4.1}, by solving the bifurcation equation \eqref{4.6}. 


\begin{proof}[Proof of \propref{thm4.1}]
	1. Using \lemref{prop4.1}, we can now plug the solution $w=w(v,r)$ to \eqref{4.7}
	back into \eqref{4.6}, and get the bifucation equation
	\begin{equation}
		\label{4.19}
		QF(v+w(v,r),r)=0.
	\end{equation}
Recall here $Q$ is the orthogonal projection onto $N:=\Null  d_uF(0,b/\kappa^2)$
defined in \eqref{4.5}.
	Solving \eqref{4.19} in $v$ and $r$ amounts to
	solving \eqref{4.6}--\eqref{4.7}, and therefore 
	gives a solution $u=v+w(v,r)$ to
	\eqref{4.2}.
	
	2. Let
	\begin{equation}\label{4.20}
{		 \xi  \quad \text{and} \quad\eta_k,\,1\le k\le \dim \Om}
	\end{equation}
	be some orthogonal bases of $K$ and $\Om$,  defined respectively in \eqref{3.9} and \eqref{3.5},  {with $\br{\abs{\xi}^2}\equiv \frac{1}{\abs{\Si}}\int \abs{\xi}^2 =1$. }
	
{Here $K\subset \H^s$ is a  complex vector subspace with dimension $1$ by assumption,  while $\Om\subset \vec \H^s$ is  real vector space with finite dimensions \cite{MR729759}.}

	Let $s$ and $t=(t_1,\ldots,t_{\dim_\Om})$
	{respectively be {the {complex and} real coefficients} of vectors in $K$ and $\Om$  w.r.t.~the bases \eqref{4.20}.
For $v=(\phi, \gamma)\in K\times \Omega$, we will use the parametrization 
\begin{equation}\label{4.21}
		\phi=\phi(s)\equiv s\xi\in K,\quad \gamma=\gamma(t)\equiv \sum_{k=1}^{\dim \Om} t_k\eta_k\in \Om.
\end{equation}
}
	3. 
	Next, let $V$ be a sufficiently small neighbourhood of $(s,t,r)=(0,0,b/\kappa^2)$ in $\Rb\times \Rb^{\dim \Om}\times \Rb_{>0}$ {(note that hereafter, $s$ is real-valued in $V$)}. For every $(s,t,r)\in V$, we consider \begin{align}\label{4.26s}
		u_{str}:=	(\psi_{str},\al_{str}) = v_{st}+w_{str},
		\end{align}
	 where $v_{st}=(\phi(s),\g(t))$ is parametrized as \eqref{4.21} and $w_{str}=w(v_{st},r)$ is the solution found in \lemref{prop4.2}, satisfying, by estimates  \eqref{6.15a}--\eqref{6.15b},
	\begin{align}
		\label{4.26ss}
		w_{str}=(O(\abs{s}^3)+O(\abs{s}\abs{t}),O(\abs{s}^2)),
	\end{align}
{and similarly for its derivatives in $s,t,r$ on $V$.}

By \lemref{prop4.1}, Part (1),  
we can expand $$F(s,t,r)\equiv F
(u_{str},r)$$ 
{as a $C^2$ map from $V\subset\Rb\times \Rb^{\dim \Om}\times \Rb_{>0}$ to the real Banach space $X^{s-2}$, $s\ge2$.}


	By the gauge symmetry \eqref{invar} of $F$, it is not hard to check that any solution to the equation \begin{align}\label{4.7'}
		F(s,t,r)=0
	\end{align} in $V$ gives rise to a circle of solutions to the equation $F(u,r)=0$, with $u=v_{e^{i\theta}s,t}+w_{e^{i\theta}s,t,r}$ and any $\theta\in\Rb$. Thus, our goal now is to solve \eqref{4.7'} in $V$.

To begin with, we derive an explicit expression of the map $F(s,t,r)$. 
Estimate \eqref{4.26ss}, together with definition \eqref{4.26s}, yields
	\begin{align}\label{psiform}
		\psi_{str}=&\phi(s)+O(\abs{s}^3)+O(\abs{s}\abs{t}),\\\al_{str}=&\g(t)+O(\abs{s}^2),\label{alForm}
	\end{align}
{with corresponding estimates for derivatives in $s,t,r$ on $V$. }
Plugging \eqref{psiform}--\eqref{alForm} into \eqref{4.1}, observing that the nonlinear term in the first component of \eqref{4.1} is cubic in $\psi$, and using that \begin{align}
		(\Lap_{a^b+{\alpha_{str}}}-\Lap_{a^b})\xi =O(\abs{t})+O(\abs{s}^2),
	\end{align}
we  obtain
 	\begin{align}
 	\label{4.22}
 	[F(s,t,r)]_\psi&=s(-\Lap_{a^b}-\kappa^2r)\xi+O(\abs{s}\abs{t}){+O(\abs{s}^3),}
 	\end{align}
{with suitable estimates on the error terms.}
 
Next, since $J(\psi,\al)=\Im(\bar\psi\grad_{a^b+\alpha}\psi)$ (see \eqref{J}) is quadratic in $\psi$, we find, with  $\psi_{str}$ as in  \eqref{psiform}, \begin{align}\label{Jexp}
	J(\psi_{str},\al)=\abs{s}^2J(\xi,0)-\abs{s}^2\abs{\xi}^2\al +O(\abs{s}^2\abs{t}^2)+O(\abs{s}^4\abs{t})+O(\abs{s}^6).
\end{align} For $\al=\al_{str}$ as in \eqref{alForm}, expansion \eqref{Jexp} becomes
\begin{align}\label{Jexp'}
	J(\psi_{str},\al_{str})=\abs{s}^2J(\xi,0)-\sum_kt_k\abs{s}^2\abs{\xi}^2\eta_k+O(\abs{s}^2\abs{t}^2)+O(\abs{s}^4).
\end{align} 
Plugging \eqref{Jexp'} into \eqref{4.1} yields
 	\begin{align}
		\label{4.23}
		[F(s,t,r)]_\al=d^*d\al_{str}- \abs{s}^2J(\xi,0)-\sum_k t_k\abs{s}^2\abs{\xi}^2\eta_k +O(\abs{s}^2\abs{t}^2)+O(\abs{s}^4),
	\end{align}
{with corresponding estimates for derivatives in $s,t,r$ on $V$.} 
Moreover, in addition to using \eqref{psiform}--\eqref{alForm}, we can also eliminate
the even order terms in expansion \eqref{4.22} and odd order terms in 
\eqref{4.23}, counting $\abs{s}$ and $\abs{t}$ as of the orders $1$ and $2$, respectively, by the gauge invariance \eqref{invar}.

4. Next, 	let $G(s,t,r)=QF(u_{str},r)$, where $u_{str}$ is defined in \eqref{4.26s} and write $G=([G]_\psi,[G]_\al)$. 
Our goal now is to derive a more explicit formula for $G$ in terms of the 
	parametrization \eqref{4.21}. 
	
	{Firstly, by \lemref{prop4.1} part (1) and the chain rule for Fr\'echet derivatives,
		$G(s,t,r)$ is a $C^2$ map from $V\subset\Rb\times \Rb^{\dim \Om}\times \Rb_{>0}$ to the finite-dimensional real vector space $N\subset X^{s-2}$, $s\ge2$.} Secondly, by definition, 
	we have $-\Lap_{a^b}-b\vert_{K}=0$.
	Lastly, so long as $\xi$ solves \eqref{3.2}, i.e. $\xi\in K$,
 by \cite[{Props. 5.1, 5.3}]{MR4049917}, the supercurrent
	 $J(\xi,0)$ is both co-closed and co-exact. 
	 By the characterization of the space $\Om$ in \propref{prop3.1}, this implies 
 $Q'd^*d\al_{str}=Q'J(\xi,0)=0$.
 
{Hence, using these facts, we can rewrite \eqref{4.22} and \eqref{4.23} as}
	\begin{align}
		\label{4.24}
		{[G(s,t,r)]_\psi}&=s\del{\kappa^2r-b} \xi
		{+O(\abs{s}\abs{t}){+O(\abs{s}^3)}},\\
		\label{4.25}
		[G(s,t,r)]_\al&=\abs{s}^2{\sum_kt_kQ'\del{\abs{\xi}^2\eta_k}+O(\abs{s}^2\abs{t}^2)+O(\abs{s}^4)}, 
	\end{align}
{with corresponding estimates for derivatives in $s,t,r$. }

5. {Now we investigate the equations 
	\begin{align}
	{  \frac1s[G(s,t,r)]_\psi=}&0,\label{443}\\ \frac{1}{\abs{s}^2}[G(s,t,r)]_\al=&0.\label{444}
	\end{align}
Clearly, any solution to \eqref{443}--\eqref{444} is also a solution to the bifurcation equation \eqref{4.19}. 
In view of the parametrization \eqref{4.20}, taking inner product of \eqref{443}--\eqref{444} w.r.t. $\xi $ and $\eta_k$, we can write these equations as
	\begin{align}
		\label{4.26}
		 { \kappa^2r-b }{+R_\psi(s,t,r)}&=0,\\
		\label{4.27}
		\sum_{l=1}^{\dim \Om}B_{kl}t_l{+R_{\al,k}(s,t,r)}&=0,\quad k=1,\ldots,\dim \Om.
	\end{align}
{Here, the remainders $R_\psi$ and $R_{\al,k}$ are $C^2$ functions from $V$ to $\Cb$ and $\Rb^{\dim\Om}$, respectively, since $G$ is a $C^2$ map from $V$ to $N\subset X^{s-2},\,s\ge2$.} 
The remainders satisfy the estimates
	\begin{align}
		R_\psi(s,t,r)=& {O (\abs{t})+O(\abs{s}^2)},\label{RpsiEst}\\
		R_{\al,k}(s,t,r)=&{O(\abs{t}^2)+O(\abs{s}^2)},\label{RalEst}
	\end{align}
{and similarly for their derivatives,} and 

\begin{equation}\label{B1}
	{	B_{kl} :=\inn{\eta_k}{\xi\eta_l}_{\vec L^2}.}
	\end{equation}
}
To obtain \eqref{4.27}, we use self-adjointness of the orthogonal projection $Q':\vec\cH^s\to \vec\cH^s$, and the fact that $\eta_k\in \Om=\ran Q'$ for every $k$.


To summarize, we are now left with solving \eqref{4.26}--\eqref{4.27} in $V\subset\Rb\times\Rb^{\dim\Om}\times\Rb_{>0}$, which amounts to solving the bifurcation equation \eqref{4.19}.

Now, we view $s$ as a parameter and solve the algebraic system \eqref{4.26}--\eqref{4.27} for $(t,r)\in\Rb^{\dim\Om}\times \Rb_{>0}$.
{\begin{lemma}
		\label{thmD.1}

		{For every $s\in\Rb$ with $\abs{s}\ll1$, there exists a unique $C^2$ solution $(t,r)$ to \eqref{4.26}--\eqref{4.27} in a small neighbourhood of $(0,b/\kappa^2)\in \Rb^{\dim\Om}\times \Rb_{>0}$.
			
			Moreover, this solution satisfies}
		\begin{align}
			\label{5.1s}
			t=O(\abs{s}^2),\quad r=\frac{b}{\kappa^2}+O(\abs{s}^2), \quad \text{ as }\abs{s}\to 0,
		\end{align}
		{and similarly for their derivatives.}
	\end{lemma}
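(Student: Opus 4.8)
The plan is to solve the algebraic system \eqref{4.26}--\eqref{4.27} for $(t,r)$ as a function of the parameter $s$ by the Implicit Function Theorem, and then to read off the orders in \eqref{5.1s} from the remainder bounds \eqref{RpsiEst}--\eqref{RalEst} by a short bootstrap. First I would assemble the two equations into a single $C^2$ map
\[
\Phi:V\subset\Rb\times\Rb^{\dim\Om}\times\Rb_{>0}\to\Rb\times\Rb^{\dim\Om},\qquad
\Phi(s,t,r)=\Big(\kappa^2 r-b+R_\psi,\ \big(\textstyle\sum_l B_{kl}t_l+R_{\al,k}\big)_k\Big),
\]
where the first (scalar, real) component is the $\psi$-equation: after the gauge-fixing $s\in\Rb$, pairing $[G]_\psi$ with $\xi$ produces a single real condition, so that \eqref{4.26} is genuinely one real equation for $r$. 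The map $\Phi$ is $C^2$ because the remainders are, and the estimates \eqref{RpsiEst}--\eqref{RalEst} give $\Phi(0,0,b/\kappa^2)=0$; thus $(t,r)=(0,b/\kappa^2)$ solves the system at $s=0$.

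The main point is to verify that the partial differential $d_{(t,r)}\Phi$ is invertible at $(0,0,b/\kappa^2)$, and the key algebraic fact here is that the matrix $B=(B_{kl})$, $B_{kl}=\inn{\eta_k}{\abs{\xi}^2\eta_l}_{\vec L^2}$ (the correct reading of \eqref{B1} in view of \eqref{4.25} and the self-adjointness of $Q'$), is invertible. Indeed $B$ is the Gram matrix of the quadratic form $t\mapsto\int_\Si\abs{\xi}^2\,\big|\sum_l t_l\eta_l\big|^2$, which is strictly positive for $t\ne0$: by \eqref{Nulldbar} and \thmref{thm:Holo} the generator $\xi$ is gauge-equivalent to a nonzero holomorphic section, so its zeros are isolated and $\abs{\xi}^2>0$ off a discrete set, while $\{\eta_l\}$ is a basis of $\Om$. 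Using this, together with the observation that $R_\psi$ and $R_{\al,k}$ together with their $r$-derivatives vanish at the base point (their leading bounds have no term surviving at $t=s=0$, and $R_{\al,k}=O(\abs{t}^2)$ kills the $t$-derivative of the $\al$-block there), I would compute that $d_{(t,r)}\Phi$ is block upper-triangular,
\[
d_{(t,r)}\Phi\big|_{(0,0,b/\kappa^2)}=\begin{pmatrix}\kappa^2 & *\\[2pt] 0 & B\end{pmatrix},
\]
with determinant $\kappa^2\det B\ne0$. The Implicit Function Theorem then yields a unique $C^2$ solution $(t,r)=(t(s),r(s))$ in a neighbourhood of $(0,b/\kappa^2)$ for all $\abs{s}\ll1$, giving both existence and local uniqueness.

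It remains to establish the vanishing orders in \eqref{5.1s}. Evaluating the $\al$-equation along the solution gives $Bt(s)=-R_\al(s,t(s),r(s))=O(\abs{t(s)}^2)+O(\abs{s}^2)$; since $t(s)\to0$ and $B^{-1}$ is bounded, absorbing the quadratic term for $\abs{s}$ small yields $t(s)=O(\abs{s}^2)$. Substituting into the $\psi$-equation, $\kappa^2 r(s)-b=-R_\psi=O(\abs{t(s)})+O(\abs{s}^2)=O(\abs{s}^2)$, whence $r(s)=b/\kappa^2+O(\abs{s}^2)$. The corresponding bounds on the $s$-derivatives follow by differentiating these two identities in $s$ and inserting the derivative estimates for $R_\psi,R_{\al,k}$ together with the bounds just obtained. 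I expect the only genuinely substantive step to be the invertibility of $d_{(t,r)}\Phi$---specifically the positive-definiteness of $B$, which is where the holomorphicity of $\xi$ enters---the remainder being a direct application of the Implicit Function Theorem and the bootstrap above.
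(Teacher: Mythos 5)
Your proposal is correct and follows essentially the same route as the paper's proof: both arguments hinge on the positive-definiteness of the matrix $B_{kl}=\inn{\eta_k}{\abs{\xi}^2\eta_l}_{\vec L^2}$ (your reading of \eqref{B1} is the intended one, consistent with \eqref{4.25} and \eqref{Best}, and your justification via the isolated zeros of the holomorphic section $\xi$ makes explicit what the paper asserts) together with the Implicit Function Theorem at $(s,t,r)=(0,0,b/\kappa^2)$, followed by the same bootstrap of \eqref{RpsiEst}--\eqref{RalEst} to get the orders in \eqref{5.1s}. The only difference is organizational: the paper applies the IFT twice in nested fashion --- first solving \eqref{4.27} for $t=t(s,r)$, then substituting into \eqref{4.26} and solving $\tilde G(s,r)=0$ for $r=r(s)$ --- whereas you apply it once to the joint system via the block upper-triangular Jacobian $\left(\begin{smallmatrix}\kappa^2 & *\\ 0 & B\end{smallmatrix}\right)$, which is equivalent here; the paper's sequential version has the side benefit of producing the intermediate map $t(s,r)$, whose dependence on $s$ only through $\abs{s}$ (by gauge symmetry) is reused in Section \ref{sec:5}.
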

}
This proposition is proved in \secref{sec:D}.

6. \lemref{thmD.1},
together with \lemref{prop4.2}, implies that there exists a family  
\begin{align}
	\label{usSol}
	(u_s,r_s),\qquad s\in\Rb,\quad \abs{s}\ll1 ,
\end{align}
with $r_s$ satisfying \eqref{5.1s}, that uniquely solves \eqref{4.2} in a small neighbourhood of the trivial solution $(u,r)=(0,b/\kappa^2)$.

This	 completes the proof of \propref{thm4.1}.
\end{proof}

\section{{Solvability of the Bifurcation Equation} \label{sec:D}}


In this section, we solve the bifurcation equations \eqref{4.26}--\eqref{4.27} by proving \lemref{thmD.1}.
\begin{proof}[Proof of \lemref{thmD.1}]
1. 
	We first solve \eqref{4.27}.
	This equation has the trivial solution $(s,t,r)=(0,0,b/\kappa^2)$ (see \eqref{4.4s}), and the l.h.s.~of \eqref{4.27} is $C^2$ in a small neighbourhood {$V\subset\Rb\times \Rb^{\dim \Om}\times \Rb_{>0}$ around this zero.}
	
	
	We differentiate w.r.t. $t$ and use  and the remainder estimate \eqref{RalEst} to find that the Jacobian matrix of the l.h.s.~of \eqref{4.27} at this zero is given by\begin{equation}\label{B2}
		B =[B_{kl} ], 
	\end{equation}
	where $B_{kl} $ is as in \eqref{B1} and $1\le k,l\le\dim \Om.$
	
	Let ${t} = (t_1, \dots t_{\dim \Om})$ be an arbitrary non-zero real vector. 
	We compute, {for 
		any  $\om = \sum_{i=1}^{\dim \Om}t_i \eta_i  \in\Om\setminus\Set{0}$,
		\begin{align}
			\inn{Bt}{t}=& 
			\sum_{l,k=1}^{\dim \Om} 
			\inn{\eta_k}{\abs{\xi }^2\eta_l}_{\vec \cL^2}t_l t_k
			\notag\\
			=& \inn{\om}{\abs{\xi }^2\om}_{\vec \cL^2}>0.
			\label{Best}
		\end{align}
		Thus the matrix $B$ in \eqref{B2} is positive-definite  and therefore invertible.}
	
	By construction, \eqref{4.27} is a real system of $C^2$ equations posed on a real vector space, {$\Rb\times \Rb^{\dim\Om}\times \Rb$.  By the gauge symmetry \eqref{invar}, the functions $R_{\al,k}$ depend only on $\abs{s}$ for all $k$.}
	{Hence, by the invertibility of $B$ and the Implicit Function Theorem, eq.~\eqref{4.27} has a unique $C^2$ solution $t=(t_1,\ldots,t_{\dim\Om})$ in a small neighbourhood around $(s,r)=(0,b/\kappa^2)$ in $\Rb\times \Rb_{>0}$, satisfying
		\begin{equation}\label{D.2}
			t(s,r)=O(\abs{s}^2),
		\end{equation}
		and similarly for its derivatives. }
	
	We note that, by the second relation in \eqref{invar}, the solution $t$ depends only on $\abs{s}$ and $r$. 

2. Next, we define the l.h.s.~of \eqref{4.26} after plugging the solution $t$ found in Step 1 to \eqref{4.27} 
as \begin{align}\label{tGdef}
		\tilde G (s,r) =  \kappa^2r-b {+\tilde R_\psi(s,r)},
	\end{align}
	where $\tilde R_\psi(s,r):= R_\psi(s,t(s,r),r)$    satisfies,  by \eqref{RpsiEst} and \eqref{D.2},
\begin{align}
				\tilde R_\psi(s,r)=& {O(\abs{s}^2)},\label{RpsiEst'} 
\end{align}
		and similarly for its derivatives. 
	   
	   We look for solution in a small neighbourhood of $(s,r)=(0,b/\kappa^2)$  to the equation
	\begin{align}
		\label{4.26'}
\tilde G(s,r)=0.
	\end{align}
	By \eqref{4.4s} and definition \eqref{tGdef}, eq.~\eqref{4.26'} has the trivial solution $(s,r) = (0, b/\kappa^2)$.  Moreover, by the $C^2$ regularity of the solution $t=t(s,r)$ found in Step 1 above, the map $\tilde G$ is $C^2$ around this zero, and $\di_r \tilde G(0,  b/\kappa^2)=\kappa^2>0$ {by estimate \eqref{D.2} and similarly for $\di_r\tilde R_\psi$}. Thus, by the Implicit Function Theorem, we obtain a unique solution $r=r(s)$ to \eqref{4.26'} around $s=0$, satisfying
		\begin{equation}\label{rEst}
			r(s)=\frac{b}{\kappa^2}+O(\abs{s}^2),
		\end{equation}
		and similarly for its derivatives. 
This proves \lemref{thmD.1}.

\end{proof}

\section{Precise Asymptotics of the Non-trivial branch}\label{sec:5}

In this section we finish our proof of \thmref{thm1.1}.

\begin{proposition}
	\label{prop5.1}
{	Let the conditions of \propref{thm4.1} hold and let
	$$(u_s,r_s),\quad \abs{s}\ll1 $$ be the nontrivial branch of solution to \eqref{4.2}, constructed in \propref{thm4.1}.
} Then, for $u_s=(\psi_s,\al_s)$, we have 
	{\begin{align}
		\psi_s&=\phi(s)+O(\abs{s}^3),\label{5.1}\\
		\al_s&=\g(t(s))+O(\abs{s}^4).\label{5.2}
	\end{align} 
	Here $\phi$ and $\g$ are as in \eqref{4.21} and satisfy
	\begin{align}
		&(-\Lap_{a^b}-b)\phi=0,\label{5.3}\\
		&d\g=\frac{1}{2}*\del{1-\abs{\phi}^2}.\label{5.4}
	\end{align}}

Moreover, we can take $s\in\Rb_{\ge0}$ and if, in addition, $r$ satisfies (c.f.~\eqref{1.5})
\begin{align}\label{1.5''}
	(\kappa-\sqrt {b/r})(\kappa -\kappa_c ) >0
\end{align}
where $\kappa_c=\kappa_c(1)$ is given by \eqref{kappac} , then, the equation $r=r(s)$ can be solved for s to obtain $s=s(r)$ with 
\begin{align}
				\label{5.4'}
{s}^2&= \frac{\kappa^2r-b}{(\kappa^2-\tfrac{1}{2})\beta+\tfrac{1}{2}}+O\del{\del{\frac{\kappa^2r-b}{(\kappa^2-\tfrac{1}{2})\beta+\tfrac{1}{2}}}^2}, 
\end{align}
{where $\beta=\beta(1)$ is the Abrikosov function given by \eqref{beta}.}

\end{proposition}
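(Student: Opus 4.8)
The plan is to read off the asymptotics \eqref{5.1}--\eqref{5.4} directly from the Lyapunov--Schmidt construction of \propref{thm4.1}, and then to obtain the amplitude--curvature relation \eqref{5.4'} by computing the reduced bifurcation equation \eqref{4.26} to second order in $s$.

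For the asymptotics, I would insert the bound $t(s)=O(\abs{s}^2)$ from \lemref{thmD.1} into the estimate \eqref{4.26ss} for $w_{str}$. This gives $[w]_\psi=O(\abs{s}^3)$ and $[w]_\al=O(\abs{s}^2)$, so that $\psi_s=\phi(s)+O(\abs{s}^3)$ with $\phi(s)=s\xi\in K$, which is \eqref{5.1}--\eqref{5.3}. The leading co-closed part of $\al_s$ solves the linearized Maxwell equation $d^*d\al=PJ(\xi,0)$; since $\xi\in K=\Null\partial''_{a^b}$, the supercurrent $J(\xi,0)$ is co-closed and co-exact (as already used in the proof of \propref{thm4.1}), so $PJ(\xi,0)=J(\xi,0)$ and the holomorphic identity $J(\xi,0)=\tfrac12*d\abs{\xi}^2$ holds. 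Integrating yields $d\al=\tfrac12*(c-\abs{\xi}^2)$ for a constant $c$, and $c=1$ is forced by the vanishing total flux of $d\al$ (degree $0$ via \thmref{thm2.1}) together with $\br{\abs{\xi}^2}=1$; this is \eqref{5.4}.

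The heart of the matter is \eqref{5.4'}. Projecting the first component of $F(s,t(s),r)=0$ onto $\xi$ and dividing by $s$ recasts \eqref{4.26} as a scalar equation $\kappa^2r-b+D\,s^2+O(\abs{s}^4)=0$, and the task is to identify $D$. There are exactly two $O(s^2)$ contributions. The cubic term gives $\kappa^2\,\inn{\abs{\xi}^2\xi}{\xi}/\norm{\xi}^2=\kappa^2\,\br{\abs{\xi}^4}/\br{\abs{\xi}^2}=\kappa^2\beta$. The back-reaction of the induced field $\al\approx s^2\al_*$ produces, through the part of $-\Lap_{a^b+\al}$ linear in $\al$, the pairing $2\inn{\al_*}{J(\xi,0)}/\norm{\xi}^2$; using $\inn{\al_*}{J(\xi,0)}=\inn{\al_*}{d^*d\al_*}=\norm{d\al_*}^2=\tfrac14\int(1-\abs{\xi}^2)^2\om=\tfrac14\abs{\Si}(\beta-1)$ together with \eqref{5.4}, $\br{\abs{\xi}^2}=1$, and $\norm{\xi}^2=\abs{\Si}$, this pairing equals $\tfrac12(\beta-1)$. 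Since the sign of the coupling term is the most delicate point, I would instead fix $D$ by expanding the rescaled energy \eqref{2.12} at $(\phi(s),a^b+\g(t(s))+s^2\al_*)$ to order $s^4$: the cross terms $\int\om\cdot d\al_*$ and the harmonic--supercurrent pairing vanish (by $\br{\abs{\xi}^2}=1$ and co-exactness of $J$), leaving $\cE=\cE(0,a^b)+\tfrac{\abs{\Si}}2(b-\kappa^2r)s^2+\tfrac{\abs{\Si}}4 D\,s^4+O(s^6)$ with $D=(\kappa^2-\tfrac12)\beta+\tfrac12=\beta(\kappa^2-\kappa_c^2)$; stationarity in $s^2$ then yields $s^2=(\kappa^2r-b)/D$ and, as a byproduct, \corref{cor1.2}.

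Finally, by the gauge invariance \eqref{invar} the reduced equation depends on $s$ only through $\abs{s}$, so $r=r(s)$ is a function of $s^2$ with $\partial_{s^2}r\big|_0=D/\kappa^2\ne0$ whenever \eqref{1.5''} holds; the Implicit Function Theorem inverts it to $s^2=(\kappa^2r-b)/D+O((\kappa^2r-b)^2)$, which is \eqref{5.4'}. Positivity of the right-hand side is precisely \eqref{1.5''}: factoring $\kappa^2r-b=r(\kappa-\sqrt{b/r})(\kappa+\sqrt{b/r})$ and $D=\beta(\kappa-\kappa_c)(\kappa+\kappa_c)$, the quotient is positive iff $(\kappa-\sqrt{b/r})(\kappa-\kappa_c)>0$, and the freedom $s\mapsto e^{i\theta}s$ lets us take $s=\sqrt{s^2}\in\Rb_{\ge0}$. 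The main obstacle throughout is determining the value and sign of the magnetic back-reaction coefficient, which the energy expansion settles unambiguously.
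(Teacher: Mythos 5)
Your proposal is correct, and for the decisive step it takes a genuinely different route from the paper. The parts that coincide: reading \eqref{5.1}--\eqref{5.3} off the Lyapunov--Schmidt construction, the parity argument (gauge invariance \eqref{invar} forces $r$ to be even in $s$, so $r=b/\kappa^2+Rs^2+O(s^4)$), and the final inversion with the positivity discussion under \eqref{1.5''}. Where you diverge is in computing the coefficient $D=\kappa^2R$. The paper pairs the full equation $(-\Lap_{a^b+\al}-\kappa^2 r)\psi+\kappa^2\abs{\psi}^2\psi=0$ with $\xi$, divides by $s^2$, lets $s\to0$, and evaluates the magnetic cross term by the identity $\frac{1}{\abs{\Si}}\inn{\xi}{2i\eta\cdot\grad_{a^b}\xi}=\frac{1}{2}\del{\br{\abs{\xi}^2}^2-\br{\abs{\xi}^4}}$ imported from \cite{MR4049917}. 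That cross term is exactly the one whose sign you distrusted, and rightly so: with the convention $\inn{f}{g}=\int\bar fg$ one has $\Re\inn{\xi}{2i\al_*\cdot\grad_{a^b}\xi}=-2\inn{\al_*}{J(\xi,0)}$, so your first-pass pairing $+2\inn{\al_*}{J(\xi,0)}/\norm{\xi}^2$ carries the wrong sign, and correcting it gives $D=\kappa^2\beta-\frac12(\beta-1)=(\kappa^2-\frac12)\beta+\frac12$ as required. Your replacement---expanding the reduced energy to order $s^4$, where the back-reaction enters only through the manifestly nonnegative quantities $\norm{d\al_*}^2=\frac14\abs{\Si}(\beta-1)$ and $\norm{\psi}_{\cL^4}^4$, and extracting $s^2=(\kappa^2r-b)/D$ from stationarity---is a sound, self-contained way to fix the sign, and it delivers \corref{cor1.2} as a byproduct (the paper proves that corollary separately by essentially the same energy computation). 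Two points deserve to be made explicit if you write this up: first, why stationarity of the reduced energy characterizes the LS solutions, namely that $F$ in \eqref{4.1} is the gradient of $\cE_r$ on $\cH^s\times\vec\cH^s$ (true, since $d^*da$ is automatically co-closed and $P$ is orthogonal), so critical points of $e_r(s):=\cE_r(v_s+w(v_s,r))$ correspond precisely to solutions of the bifurcation equation \eqref{4.19}; second, that replacing the exact reduced state $v_s+w(v_s,r)$ by your trial state $(\phi(s),a^b+\g(t(s))+s^2\al_*)$ perturbs the energy only by $O(s^6)$, because the discrepancy is $O(s^3)$ while $\cE_r'$ at the trial state is also $O(s^3)$. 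Finally, your reading of \eqref{5.2}/\eqref{5.4}---as statements about the full $O(s^2)$ coefficient of $\al_s$, i.e.\ harmonic part plus the co-exact solution of $d^*d\al_*=J(\xi,0)$ with the constant fixed by flux quantization---is the right one, and is in fact cleaner than the paper's own phrasing, in which $\g(t)\in\Om$ is literally harmonic and so could not by itself satisfy $d\g=\frac12*\del{1-\abs{\phi}^2}$.
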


\begin{proof}
	The solution branch $u_s$ from \eqref{usSol} is given   by 
	\begin{equation}\label{5.5}
		u_s=v_s+w(v_s,r_s),
	\end{equation}
	{where $v_s=(\phi(s),\g(t(s)))$ with $\phi$ and $\g$ given in \eqref{4.21}, $t(s)=t(s,r(s))$ and $r_s=r(s)$ given in \eqref{D.2} and \eqref{rEst}, and with $w$ satisfying \eqref{4.26ss}. This implies \eqref{5.1}--\eqref{5.2}}.
Eqs.~\eqref{5.3}--\eqref{5.4} are implicit in the proof of \propref{thm4.1}. These parts are analogous to the proofs of \cite[{Prop.5.5}, eqs.~(5.26)--(5.27)]{MR4049917}.

 It remains to prove \eqref{5.4'}.
{Observe that by gauge symmetry \eqref{invar}, 
	$\tilde R_\psi(s,r)$ in \eqref{tGdef}
	 depends on $s$ through $\abs{s}$.} 
Thus, we can assume $s\in\Rb$. {Again, by the reflection symmetry, which follows from gauge symmetry \eqref{invar}, the expansion of $r$ in $s$ contains only even powers. Thus, we have}
\begin{equation}\label{5.13}
	r=\frac{b}{\kappa^2}+R {s}^2+O({s}^4),
\end{equation}
for some  $R\in\Rb$, {and with corresponding estimates on derivatives in the remainder.}

Take $\xi\in K$ as in \eqref{4.20}. Using the expansion \eqref{5.1}--\eqref{5.2}, together with the  \eqref{5.13},
we find
\begin{equation}
\label{5.14}
\begin{aligned}
	&\inn{\xi}{(-\Lap_{a^b+\al}-\kappa^2r)\psi }\\=&\inn{\xi}{(-\Lap_{a^b}-b)\psi }+
	{s}^2(\inn{\xi}{2i\eta\cdot \grad_{a^b}\psi}-\kappa^2R \inn{\xi}{\psi})+O({s}^4)\\
	=& {s}^2(\inn{\xi}{2i\eta\cdot \grad_{a^b}\psi}-\kappa^2R \inn{\xi}{\psi})+O({s}^4).
\end{aligned}
\end{equation}
Here, $\eta:={s}^{-2}\g(t(s))$, and we move to the last line using the fact that 
$-\Lap_{a^b}$ is self-adjoint, and the equation \eqref{5.3} satisfied by $\xi$. 

Since, by construction, the pair $(\psi,a^b+\al)$ solves the rescaled Ginzburg-Landau equations \eqref{2.13},
we have
\begin{equation}\label{5.14'}
	(-\Lap_{a^b+\al}-\kappa^2r)\psi+\kappa^2\abs{\psi}^2\psi=0.
\end{equation}
Then taking the inner product
w.r.t. ${s}^{-2}\xi$ on both sides of \eqref{5.14'},  using the expansion \eqref{5.14}, and then taking $s\to0$, we find 
\begin{equation}
	\label{5.15}
	\inn{\xi}{2i\eta\cdot \grad_{a^b}\xi}-\kappa^2R \norm{\xi}_{\cL^2}^2+\kappa^2 \norm{\xi}_{\cL^4}^4=0.
\end{equation}

Next, recalling the definition $\br{f}=\frac{1}{\abs{\Si}}\int _\Si f$, we have the identity (see the equation after \cite[eq.~(5.32)]{MR4049917})
\begin{align}
	\label{5.16}
	\frac{1}{\abs{\Si}}\inn{\xi}{2i\eta\cdot \grad_{a^b}\xi}=&{\frac{1}{2}\del{ \br{\abs{\xi}^2}^2-\br{\abs{\xi}^4}} }.
\end{align}
Substituting \eqref{5.16} into \eqref{5.15} and solving for $R$, we find
\begin{align}\label{6.14s}
	{R=\frac{1}{2\kappa^2}+\del{1-\frac{1}{2\kappa^2}}\br{\abs{\xi}^4}\quad \text{since }\br{\abs
	\xi^2}=1. }
\end{align}
Now, rearranging \eqref{5.13}, we find
\begin{align}\label{sEst1}
	{s}^2 =   \frac{\kappa^2r-b}{ \kappa^2 R} +O({s}^4),
\end{align}
which, together with \eqref{6.14s} and the fact that ${\beta=\br{\abs{\xi}^4}}$ in the non-degenerate case with $\dim K=1$, yields 
\begin{align}\label{sEst2}
	{s}^2=\frac{\kappa^2r-b}{\del{\kappa^2-\frac12}{\br{\abs{\xi}^4}}{+\frac{1}{2}}}+O({s}^4).
\end{align}
With this, the equation $r=r(s^2)$ can be solved for $s^2$ to yield $s^2=s^2(r)$, with (6.6).

 Note that, {since $(\kappa^2-\tfrac{1}{2})\beta+\tfrac{1}{2}=\beta(\kappa^2-\kappa_{c}^2)$ by definitions \eqref{beta}--\eqref{kappac},  condition \eqref{1.5''} ensures} that the leading term in \eqref{sEst2} is strictly positive and so \eqref{5.4'} is self-consistent for small $s$ (c.f. \remref{rem1.1}).

This completes the proof of \propref{prop5.1}.
\end{proof}

 \begin{remark}\label{rem10}
 	{It follows from \eqref{5.4'} and condition \eqref{1.5} that $r(s)$ is locally parabolic; i.e., $r \sim C s^2$. Hence there can be at most two branches, $\pm s(r)$. However, since these would be related by a gauge transformation, up to gauge change, there will locally near $(r,s)=(b/\kappa^2, 0)$ be only one branch $s(r)$.}
 \end{remark} 

\begin{proof}[{Proof of \thmref{thm1.1}}]
	For every $s=s(r)$ with $r$ satisfying \eqref{1.5}, 
	the solution $u_s\equiv(\psi_s,\al_s)$ constructed in \propref{thm4.1} gives
	a solution to the rescaled Ginzburg-Landau equation \eqref{2.13} {on $(\Si,h_1)$ as  $(\psi_s, a^b+\al_s)$. 
			By relation \eqref{3.9}, eq.~\eqref{5.3} implies that $\xi$ is  a holomorphic section of $E$ corresponding to $a^b$. 	
			
Undoing the rescaling \eqref{scale1},}
the asymptotics \eqref{1.8}--\eqref{1.11} follow from \eqref{D.2} and \eqref{5.1}--\eqref{5.4},
	and the expansion \eqref{s} follows from \eqref{5.4'}. 
{This completes the proof of \thmref{thm1.1}.}
\end{proof}

\begin{proof}[Proof of \corref{cor1.2}]
{We first prove that, with $(\psi,a)\equiv (\psi_{s(r)},a_{s(r)})$ solving \eqref{2.13} and rescaled GL energy functional $\cE_r(\psi,a)$ from \eqref{2.12}, we have 
	\begin{equation}\label{5.17}
		\cE_r(\psi,a)=\cE_r(0,a^b)-\frac{{\abs{\Si}}}4 \frac{\abs{\kappa^2r-b}^2}{(\kappa^2-\tfrac{1}{2}){\br{\abs{\xi}^4}}+\tfrac{1}{2}}+O(\abs{\kappa^2r-b}^3),
	\end{equation}
	for $0<\abs{\kappa^2r-b}\ll1$. Here $\xi\in K\equiv \Null(-\Lap_{a^b}-b)$, ${\br{\abs{\xi}^2}=1}$ is as in \eqref{4.20}.}

	   First, taking the inner product of the first equation in \eqref{2.13} with $\psi$, and then integrating by parts, we find
	\begin{equation*}
		\int\abs{\grad_a\psi} =  \int \kappa^2\left(|\psi|^2 - |\psi|^4\right).
	\end{equation*}
	Substituting this into the rescaled GL energy \eqref{2.12}, we get 
	\begin{equation}   \label{asymp:Elambda'}
		\mathcal{E}_r (\psi, a)  = \frac{1}{2}\del{  \frac{\kappa^2r^2}{2}\abs{\Si} + \|d a\|^2_{\cL^2} - \frac{\kappa^2}{2} \| \psi\|^4_{\cL^4} } .
	\end{equation}
	This gives 
\begin{align}\label{GNEs}
		\cE_r(0,a^b)=\frac{1}{2}\del{\frac{\kappa^2r^2}{2} +b^2}|\Si|
	\quad\text{with } b=\frac{2\pi n}{\abs{\Si}}.  
\end{align}
	Next,  
	we choose  
		$\eta:=\abs{s}^{-2}\g(t(s))$
  according to expansion \eqref{5.2}.
	 {Then $\eta\in\Om$ by construction and therefore $\inn{d \eta}{d \beta}= 0$ for any $1$-form $\beta$, see \propref{prop3.1}. Using this, the expansion $a=a^b+\abs{s}^2\eta+O(\abs{s}^4)$ (see \eqref{5.2}), together with the fact that $d a^b =b \om$, we find }
\begin{equation}		\label{asymp:a-en}
		\begin{aligned}
		\|d a\|_{\cL^2}^2 &= \|d a^b\|_{\cL^2}^2 +2 \abs{s}^2 \langle d a^n, d \eta \rangle +\abs{s}^4\|d \eta\|_{\cL^2}^2 + O(\abs{s}^6)\\
		& = b^2 \abs{\Si}  +\abs{s}^4\|d \eta\|^2_{\cL^2} + O(\abs{s}^6).
	\end{aligned}
\end{equation}
	
 	With the choice \eqref{4.20} for $\xi$, expansion \eqref{5.1} becomes $\psi=s\xi + O(\abs{s}^3)$. Plugging  this into formula 
{ 	\begin{align}
 		\label{5.15'}
 	 \frac{1}{\abs{\Si}}\norm{d\eta}_{\cL^2}^2=&-\frac{1}{4}(\br{\abs{\xi}^2}^2-\br{\abs{\xi}^4}),
 	\end{align}}
 proved in \cite[Prop.~5.5, eq.~(5.32)]{MR4049917}, {and using that $\br{\abs{\xi}^2}\equiv \frac{1}{\abs{\Si}}\int \abs{\xi}^2 =1$,}
 	 we find 
	\begin{align}\label{5.18}
		\abs{s}^4\|d \eta\|^2_{\cL^2}- \frac{\kappa^2}{2} \| \psi\|^4_{\cL^4} = \frac{{\abs{\Si}}}2\abs{s}^4\del{ \del{\frac12-\kappa^2}{\br{\abs{\xi}^4}}-\frac12}+O(\abs{s}^6).
	\end{align}
	Plugging \eqref{asymp:a-en}--\eqref{5.18}  into \eqref{asymp:Elambda'}, we conclude that         
	\begin{equation}     \label{Eexpan-s}
		   \mathcal{E}_r (\psi, a) =\cE_r(0,a^b) -\frac{{\abs{\Si}}}4  
		\abs{s}^4 \del{\del{\kappa^2-\frac12}{\br{\abs{\xi}^4}}+\frac{1}{2}} 
		+  O(\abs{s}^6).  
	\end{equation}
	This,  together with expansion \eqref{sEst2} for $s$,  gives \eqref{5.17}.
	
	
{	By definition \eqref{beta} and the {assumption $\dim K=1$}, we have ${\beta= \br{\abs{\xi}^4}}$. 
	This, together with \eqref{5.17}, the rescaling \eqref{enRel}, and the relation $\abs{\Si}=r^{-1}\abs{\Si}$,  gives \eqref{1.12}.}
\end{proof}

\section{Extension to the case $\dim K > 1$}\label{secExt}
{	In this section, we illustrate how  to extend the main existence results in \secref{sec:1} to the degenerate case, with $D = \dim K > 1$.} We also show how to extend energy estimates to this more general setting (in Section \ref{Sec7.2}).
{	\subsection{Existence theory}
	We now make some remarks on the possibility of solving the bifurcation equations \eqref{4.2} in the case when $D>1$ where $D$ is the complex dimension of $K$. The Lyapunov-Schmidt reduction detailed in section \ref{sec:4.2} carries over mutatis mutandis for $D>1$. The essential change comes in section 
	\ref{sec:4.3}, as
	we will now choose a hermitian orthonormal basis $\xi_j \,\,\, 1\leq j \leq D$  for $K$. {Define 
		$v=v_{st}=(\phi(s),\g(t))$,  
		where $s=(s_1,\ldots, s_D)$, $t=(t_1,\ldots, t_{\dim\Om})$, 
		$\phi(s) = \sum_{j=1}^D s_j \xi_j$, and  $\g(t)=\sum_{k=1}^{\dim\Om}t_k\eta_k$ as in \eqref{4.21}.
		
		Taking inner products of the bifurcation equation \eqref{4.19}}
	respectively with $\xi_j$ and $\eta_k$,  this equation can be rewritten in terms of the coordinates $s, t, r$ as
	{\begin{align}
			\label{4.26"}
			(\kappa^2r-b)s_j{+H_{\psi,j}(s,t,r)}&=0,\quad j=1,\ldots, D,\\
			\label{4.27"}
			\sum_{l=1}^{\dim \Om}B_{kl}(s)t_l+H_{\al,k}(s,t,r)&=0,\quad k=1,\ldots,\dim \Om,
		\end{align}
		where $B_{k,\ell}(s) = \langle \eta_k, |\phi(s)|^2 \eta_\ell \rangle_{\vec{L}^2}.$
		The remainders satisfy the estimates
		\begin{align}
			H_{\psi,j}(s,t,r)=& O (\abs{s} \abs{t})+O(\abs{s}^3),\label{RpsiEst"}\\
			H_{\al,k}(s,t,r)=&O(\abs{s}^2\abs{t}^2)+O(\abs{s}^4),\label{RalEst"}
		\end{align}
		and similarly for their derivatives. 
		\medskip

		{The solution of \eqref{4.27"}  is based on the following classical result of Krasnoselski that reduces, under appropriate conditions, a vector bifurcation problem to a scalar one. 
			\begin{theorem}[{\cite[Chapt.~4 ]{MR660633}}]\label{Kras}
				Let $E: \mathbb{R}^{n+1} \to \mathbb{R}^n$ be $C^2$ of the form 
				\begin{eqnarray} \label{normalform}
					E(\ell,s) &=& \ell s + H(\ell, s)
				\end{eqnarray}
				where $s \in \mathbb{R}^n, \ell \in \mathbb{R}$ and $H(\ell, s)= O(|s|^2)$ as $|s| \to 0$ uniformly in a  neighborhood of $\ell = 0$. 
				Assume that $n$ is odd; then, $(\ell,s) = (0,0)$ is a bifurcation point of $E(\ell, s)$; i.e., there are solutions $(\ell, s)$ with $s \ne 0$ of the equation $E(\ell, s)=0$
				in every neighborhood of $(\ell, s) = (0,0)$.
			\end{theorem}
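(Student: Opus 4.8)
The plan is to prove Theorem~\ref{Kras} by a topological (Brouwer) degree argument, exploiting that an odd-dimensional linear reflection has degree $-1$. Throughout, for fixed $\ell$ write $E_\ell(s):=E(\ell,s)=\ell s+H(\ell,s)$, and let $B_\rho\subset\Rb^n$ denote the open ball of radius $\rho$ about the origin. I would argue by contradiction: suppose $(0,0)$ is \emph{not} a bifurcation point. Then there exist $\rho_*>0$ and $\delta_*>0$ such that $E(\ell,s)\ne0$ whenever $0<\abs{s}\le\rho_*$ and $\abs{\ell}\le\delta_*$; in particular, for every such $\ell$ the only zero of $E_\ell$ in $\overline{B_{\rho_*}}$ is $s=0$, and $E_\ell$ never vanishes on $\partial B_{\rho_*}$.

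First I would establish that the degree $d(\ell):=\deg(E_\ell,B_{\rho_*},0)$ is a well-defined integer for every $\ell\in[-\delta_*,\delta_*]$, since by the no-bifurcation hypothesis $E_\ell$ has no zeros on $\partial B_{\rho_*}$ (this holds at $\ell=0$ as well, where $E_0=H(0,\cdot)=\OO{\abs{s}^2}$). The family $\Set{E_\ell}$, $\abs{\ell}\le\delta_*$, is then an admissible homotopy with no boundary zeros, so by homotopy invariance of the degree $d(\ell)$ is independent of $\ell$; call this common value $d_0$.

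The heart of the argument is to compute $d_0$ from both sides of $\ell=0$ and reach a contradiction. Fix $\ell\ne0$ with $\abs{\ell}\le\delta_*$. Using the bound $\abs{H(\ell,s)}\le C\abs{s}^2$ valid near $s=0$ (uniformly in $\ell$), I would pass to a smaller radius $\rho<\min\Set{\rho_*,\abs{\ell}/C}$ on whose boundary the linear term dominates: the straight-line homotopy $h_\tau(s)=\ell s+\tau H(\ell,s)$ obeys $\abs{h_\tau(s)}\ge\abs{\ell}\rho-C\rho^2>0$ for $\tau\in[0,1]$ and $\abs{s}=\rho$. Hence $\deg(E_\ell,B_\rho,0)=\deg(\ell\,\id,B_\rho,0)=(\sgn\ell)^n$. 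Since the no-bifurcation assumption forbids zeros of $E_\ell$ in the annulus $\rho\le\abs{s}\le\rho_*$, excision gives $d(\ell)=\deg(E_\ell,B_\rho,0)=(\sgn\ell)^n$. Because $n$ is odd, this equals $+1$ for $\ell>0$ and $-1$ for $\ell<0$, forcing $d_0=1$ and $d_0=-1$ simultaneously, a contradiction. Therefore $(0,0)$ must be a bifurcation point, as claimed.

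The main obstacle I anticipate is the bookkeeping of scales: the degree must be evaluated on a ball small enough that $\ell s$ dominates $H$, yet the no-bifurcation hypothesis is phrased on the fixed ball $B_{\rho_*}$, so the excision step reconciling the two radii is where care is required (one must verify the annulus is genuinely zero-free, which is exactly what the contradiction hypothesis supplies). The parity assumption on $n$ enters only at the very end, through $(\sgn\ell)^n=\sgn\ell$; for even $n$ the two one-sided degrees coincide and the argument produces no contradiction, which is consistent with the fact that the conclusion can fail when $n$ is even.
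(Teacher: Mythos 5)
Your proof is correct, but it follows a genuinely different route from the paper's. You give the classical Krasnoselskii degree argument: assuming no bifurcation, the Brouwer degree $\deg(E_\ell,B_{\rho_*},0)$ is defined and homotopy-invariant in $\ell$, yet for $\ell\ne0$ a dominance-plus-excision computation pins it to $(\sgn\ell)^n=\sgn\ell$ (using $n$ odd), so the degree jumps across $\ell=0$ --- a contradiction. Your handling of the two radii is sound: the annulus $\rho\le\abs{s}\le\rho_*$ is zero-free precisely by the contradiction hypothesis, so excision applies. The paper instead argues directly (no contradiction): it forms the scalar function $\phi(\ell,s)=\abs{s}^{-2}\langle s,E(\ell,s)\rangle$, solves $\phi(\ell(s),s)=0$ for a unique $C^2$ function $\ell(s)$ by the implicit function theorem, observes that $\hat E(s):=E(\ell(s),s)$ is then a vector field everywhere tangent to each sphere $S_\eps$, and invokes the hairy ball theorem (every vector field on the even-dimensional sphere $S_\eps$, $n-1$ even, has a zero) to produce a solution on every small sphere. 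The trade-offs: your argument needs only continuity of $E$ and is the more standard and more general proof, but it is non-constructive and yields no structure on the solution set; the paper's argument requires the $C^2$ regularity and the implicit function theorem, but it produces the distinguished curve $\ell(s)$ on which all solutions lie --- structure the paper exploits later (in Section 7, where the curve $\ell(s)$ is identified with the supercritical branch $\kappa^2 r(s)-b$). In both proofs the parity of $n$ is the decisive topological input, entering for you through $(\sgn\ell)^n$ and for the paper through the evenness of $\dim S_\eps$.
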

			\begin{remark}
				We note that \eqref{normalform} is the standard normal form for a nonlinear problem whose linearization has $\ell = 0$ as an isolated eigenvalue of multiplicity
				$n \geq 1$. 
			\end{remark}
			We defer the elegant proof of Theorem \ref{Kras} to an Appendix~\ref{secPfKras}, and turn to applying it to our problem. }
		
		
		The first step is to consider solutions  of \eqref{4.26"}. We find that 
		\begin{lemma}\label{lemT}
			Eq.~\eqref{4.27"} has a unique  solution $t=t(r,s)$ in a small neighbourhood of $(b/\kappa^2,0)$, such that $st(s,r)$ is $C^2$.		
		\end{lemma}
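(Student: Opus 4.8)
The plan is to solve \eqref{4.27"} for $t=t(s,r)$ treating $s$ as a parameter, the essential difficulty being that the coefficient matrix $B(s)=[B_{kl}(s)]$ is \emph{quadratically small}: since $\phi(\rho\sigma)=\rho\phi(\sigma)$, one has the exact homogeneity $B(s)=\rho^2B(\sigma)$ for $s=\rho\sigma$ with $\rho=\abs{s}$ and $\sigma$ on the unit sphere, so the $t$-linearization of \eqref{4.27"} degenerates at $s=0$ and the Implicit Function Theorem cannot be applied directly at the bifurcation point. First I would record, exactly as in \eqref{Best}, that $B(s)$ is positive definite for every $s\ne0$: for $\om=\sum_l t_l\eta_l\ne0$ we have $\inn{Bt}{t}=\inn{\om}{\abs{\phi(s)}^2\om}_{\vec L^2}=\int_\Si\abs{\phi(s)}^2\abs{\om}^2>0$, because $\phi(s)=\sum_j s_j\xi_j$ is a nonzero holomorphic section and hence vanishes only on a discrete set. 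By compactness of the sphere, $B(\sigma)$ is uniformly invertible, so $\norm{B(s)^{-1}}\ls\abs{s}^{-2}$.

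For $s\ne0$ small I would then recast \eqref{4.27"} as the fixed-point equation $t=-B(s)^{-1}H_\al(s,t,r)$. Using estimate \eqref{RalEst"}, namely $H_\al=O(\abs{s}^2\abs{t}^2)+O(\abs{s}^4)$ together with $\partial_tH_\al=O(\abs{s}^2\abs{t})$, the bound $\norm{B(s)^{-1}}\ls\abs{s}^{-2}$ shows that the right-hand side maps a fixed ball $\Set{\abs{t}\le\delta}$ into itself and is a contraction there, uniformly in $(s,r)$ for $\abs{s}$ and $\delta$ small. This yields the unique solution $t=t(s,r)$ near $t=0$, with $t(s,r)=O(\abs{s}^2)$ (consistent with the non-degenerate case \eqref{5.1s}), depending continuously on $(s,r)$ and extending by $t(0,r)=0$.

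To prove that $s\,t(s,r)$ is $C^2$ I would pass to the blown-up variables $s=\rho\sigma$, $t=\rho^2\tau$. Using $B(\rho\sigma)=\rho^2B(\sigma)$ and the vanishing orders in \eqref{RalEst"}, one extracts a factor $\rho^4$ (Hadamard's lemma in $\rho$), writing $H_\al(\rho\sigma,\rho^2\tau,r)=\rho^4\mathcal H(\rho,\sigma,\tau,r)$ with $\mathcal H$ smooth; here I use that the reduced bifurcation map, hence $H_\al$, is in fact $C^\infty$ in $(s,t,r)$ (the Ginzburg--Landau nonlinearity is polynomial, and the Lyapunov--Schmidt correction $w(v,r)$ from \lemref{prop4.2} inherits this regularity). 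Dividing by $\rho^4$ turns \eqref{4.27"} into $B(\sigma)\tau+\mathcal H(\rho,\sigma,\tau,r)=0$, whose $\tau$-linearization $B(\sigma)+O(\rho^2)$ is invertible for small $\rho$; the Implicit Function Theorem now applies non-degenerately and produces a smooth $\tau=\tau(\rho,\sigma,r)$. Reassembling gives $s_it_j=\abs{s}^2\,s_i\,\tau_j(\abs{s},s/\abs{s},r)$, and expanding $\tau_j$ in $\rho=\abs{s}$ exhibits each $s_it_j$ as a sum of terms positively homogeneous of degree $\ge3$ in $s$, plus a controlled remainder. Since any function smooth away from the origin and positively homogeneous of degree $3$ is of class $C^2$ at the origin, it follows that $s\,t(s,r)$ is $C^2$.

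The step I expect to be most delicate is precisely this regularity bookkeeping forced by the degeneracy of $B$. The solution $t(s,r)$ itself is only Lipschitz at $s=0$ (its leading part $\abs{s}^2\tau_j(0,s/\abs{s},r)$ is genuinely direction-dependent), so one cannot hope for $t\in C^2$, and it is only the products $s_it_j$, homogeneous of degree $3$, that recover $C^2$ regularity. This is exactly what is needed so that, after substituting $t=t(s,r)$ into \eqref{4.26"}, the reduced system becomes a $C^2$ map of $(s,r)$ in Krasnoselski normal form, permitting Theorem~\ref{Kras} to be invoked in the next step. Justifying the extraction of the $\rho^4$ factor, which relies on the smoothness of the reduced map beyond the $C^2$ regularity provided by \lemref{prop4.2}, is the other point requiring care.
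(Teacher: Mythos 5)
Your proposal is correct and follows essentially the same route as the paper: the paper likewise resolves the degeneracy of $B(s)$ at $s=0$ by blowing up the origin of $s$-space (an algebro-geometric blow-up via $\mathbb{P}^{n-1}$ rather than your polar coordinates, which is equivalent here since $B$ is even in $s$), uses the positivity $\langle B(s)t,t\rangle=\langle \omega,\abs{\phi(s)}^2\omega\rangle_{\vec L^2}>0$ together with compactness of the space of directions to get uniform invertibility, and then applies the implicit function theorem in the blow-up coordinates before pushing the solution forward to conclude that $s\,t(s,r)$ is $C^2$. Your preliminary contraction-mapping step and the explicit rescaling $t=\rho^2\tau$ are organizational variants of the paper's argument, and your explicit homogeneity bookkeeping and justification of the smoothness of the remainders beyond the stated $C^2$ regularity fill in points the paper treats implicitly.
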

		
		We momentarily defer the proof of this lemma in order to apply it to solving the full bifurcation problem. As a consequence of the lemma, insertion of  $t(s,r)$ into the higher order terms \eqref{RpsiEst"} of \eqref{4.26"} results in a biifurcation equation that is  $C^2$ which will enable us to satisfy regularity conditions of Theorem \ref{Kras}.   We now want to rewrite \eqref{4.26"} in terms of real variables by setting $s_j =x_j + i y_j$.  We also apply a constant gauge transformation to reduce $s_1$ to $s_1 = x_1$. With this gauge fix in place, we
		take $s$ to denote the coordinates, $s = (x_1, x_2, y_2, \dots x_D, y_D)$, which coordinatizes $\mathbb{R}^{2D-1}$. So in our application $n = 2D - 1$ which
		is odd so that condition of the theorem will be satisfied. With this gauge fix in place, \eqref{4.26"} is precisely in the normal form \eqref{normalform}. Indeed, setting $\ell = \kappa^2 r - b$ and substituting the solution $t=t(r,s)$ found in \lemref{lemT} to \eqref{4.26"}, the equation takes the vector form 
		\begin{eqnarray} \label{normalform'}
			\tilde{G}(\ell,s) &=& \ell s  + H(s, r)    = 0
		\end{eqnarray}
		where $H(s,r) =  O(|s|^2)$.
		Since $\tilde{G}(\ell,s)$ is $C^2$, all conditions of Theorem \ref{Kras} are satisfied. This establishes the existence of a supercritical zero. Given that, an application of the vector implicit function theorem yields the existence of a smooth branch of zeroes extending from the bifurcation point to the supercritical zero. Hence, Proposition 4.1 extends to be true for $D > 1$. 
		\medskip
		
		As a brief illustration we note how Theorem \ref{Kras}, for the case $D=1$ aligns with our earlier proof of Lemma \ref{thmD.1}. From the proof of Theorem \ref{Kras} given in Appendix \ref{secPfKras}, when $n =1$, the constraint simply reduces to $s\hat{E}(s) = 0$. So for each $s \ne 0$, $\hat{E}(s) = 0$. 
		It follows that the unique $C^2$ curve $\ell(s)$ found in the proof of Theorem \ref{Kras} determines the branch of supercritical solutions to the bifurcation equation. Applying this to \eqref{normalform'}, this is the branch $\kappa^2 r(s) -b$, determining a unique  $r(s)$ that  satisfies
		\begin{equation}\label{rEst'}
			r(s)=\frac{b}{\kappa^2}+O(\abs{s}^2).
	\end{equation}}
	\medskip

	We now turn to the proof of the \lemref{lemT}.
	
	\begin{proof}[Proof of \lemref{lemT}]
		
		We consider the quadratic form associated to the leading terms of  \eqref{4.27"},

		\begin{eqnarray} \nonumber
			\Phi(s,t) &=& \langle B(s) t, t\rangle\\  \label{t1}
			&=& \sum_{k,\ell = 1}^{\dim \Omega} t_k t_\ell B_{k \ell}(s)\\  \label{t2}
			&=& \langle \omega, |\phi(s)|^2 \omega \rangle_{\vec{L}^2}  \label{t3}
		\end{eqnarray} 
		%
		It is manifest from this that $\Phi$ is real valued and, away from $s=0$,  defined, differentiable and 
		positive definite as a quadratic form in ${t}$. 
		\medskip		
		
		To understand $\Phi$ in a neighborhood of $s=0$ we do a (real) algebro-geometric blow-up of $s$-space at the origin.
		This construction replaces the origin by a compact hypersurface diffeomorphic to the projective space of all local directions through the origin. More precisely, in our situation, the blow-up is defined as follows. Let $n = 2D-1$ and 
		consider the product space 
		$\mathbb{R}^n \times \mathbb{P}^{n-1}$ with $\mathbb{R}^n$ coordinatized by the affine coordinates $\vec{w} = (\vec{x}, \vec{y})$ and 
		$\mathbb{P}^{n-1}$ coordinatized by the {\em homogeneous} coordinates $\vec{z} = [z_1, \dots z_n]$.  The blow-up of $\mathbb{R}^n$ at the 
		origin, which we'll denote by $M$, is the closed subset of $\mathbb{R}^n \times \mathbb{P}^{n-1}$ defined by the equations 
		$\{ w_i z_j - w_j z_i  = 0\,\, | \,\, i,j = 1, \dots, n \}$. One has a natural map $\pi : M \to \mathbb{R}^n$ induced by projection onto the first factor. It is 
		fairly straightforward to check that away from $\vec{w} = 0$, $\pi$ is a diffeomorphism. However, $\pi^{-1}(0) \simeq \mathbb{P}^{n-1}$; it consists of all
		points of the form $0 \times [z_1, \dots z_n]$.  Now to see that points of $\pi^{-1}(0)$ are in 1:1 correspondence with the set of lines through the origin in $\mathbb{R}^n$, note that a line $L$ through the origin is parametrically given by $w_i = c_i \sigma$ where the $c_i$ are not all zero. Then consider the lift of this line to  $\tilde{L} = \pi^{-1}(L - 0)$ in $M - \pi^{-1}(0)$ whose parametrization is $w_i = c_i \sigma, z_i = c_i \sigma$. However, since $z_i$ are homogeneous coordinates, one may as well take this parametrization to be $w_i = c_i \sigma, z_i = c_i $. These equations are well-defined for $\sigma = 0$, defining the closure of $\tilde{L} $ in $M$ which meets  $\pi^{-1}(0)$ in the point $[c_1, \dots, c_n] \in  \mathbb{P}^{n-1}$. This defines the mapping, $L \to [c_1, \dots, c_n]$ that gives 
		the 1:1 correspondence between lines through the origin and points of $\pi^{-1}(0)$. For further explanation and details we refer the reader to 
		\cite{MR0463157} on which the above is based.
		\smallskip
		
		Applying this to $\Phi$, we set $A_{jm} = \int_{F_\Sigma} \xi_m \bar{\xi}_j \omega \wedge * \omega = \mu_{jm} + i \nu_{jm}$. Then one has a representation of $\Phi$ as 
		\begin{eqnarray} \nonumber
			\Phi(s,t) &=& \sum_{1 \leq j \leq m \leq D} \left(  A_{jm} s_j \bar{s}_m + \bar{A}_{jm} \bar{s}_j s_m \right)\\  \label{t9}
			&=& 2 \mu_{jm} (x_j x_m + y_j y_m) - 2 \nu_{jm} (y_j x_m - x_j y_m).
		\end{eqnarray}
		To lift $\Phi$ to $M$ we set $\vec{c} = [a_1, a_2, b_2, \dots a_D, b_D]$. Applying the corresponding parametrization in $\Phi$ one has
		\begin{eqnarray} \label{t10}		
			\Phi(s,t)   &=& 2 \sigma^2 \left(\mu_{jm} (a_j a_m + b_j b_m) -  \nu_{jm} (b_j a_m - a_j b_m)\right).
		\end{eqnarray}
		Under the blow-up change of coordinates to $(\vec{c}, \sigma, r)$, the higher order terms scale similarly but involve powers of $\sigma$ greater than 2. 
		Consequently, the terms in $(1/\sigma^2 )$ times the quadratic form associated to \eqref{4.27"} are polynomial and so these equations are clearly  $C^2$ in the blow-up coordinates. They have 	a leading term that is independent of $\sigma$. Hence the positivity seen
		in \eqref{t3} is inherited by $(1/\sigma^2) \Phi(s,t)$ from \eqref{t10}. But the latter is defined just in terms of  $\vec{c}$ which are coordinates on a compact projective space. So by continuity, $(1/\sigma^2) \Phi(s,t)$ realizes its infimum at some point in $\mathbb{P}^{n-1}$. But 
		this infimum is bounded away from zero. So $B(s)$ is globally invertible and, by the implicit function theorem, $t$ is smoothly defined in the blow-up coordinates and $t$ is $O(\sigma^2)$ in these coordinates and so vanishes in a continuous fashion as $\sigma \to 0$. Hence $t$ on the blowup $M$ can be pushed forward under $\pi$ to $t(s,r)$ in the original $s$-space. Similarly for the first derivatives of $t(s,r)$ and the second derivatives of $st(s,r)$. 
	\end{proof}

	\subsection{Energy estimates}\label{Sec7.2}

	It is possible to extend the energy estimate proved in \corref{cor1.2} to the degenerate case. 
	{In general, if one drops the second part in condition  \eqref{b0cond},}
	then, for the solution $(\psi_{s(r)}, a_{s(r)})$ constructed in \eqref{1.7}, we have, 
	\begin{equation}
		\label{1.12gen}
		{\cE(\psi_{s(r)},a_{s(r)},h_r)\ge\cE(0,a^{b_r},h_r)}{}-\frac{{\abs{\Si}_r}}4 \frac{\abs{\kappa^2-{b_r}}^2}{(\kappa^2-\tfrac{1}{2})\beta(r)+\tfrac{1}{2}}+O(\abs{\kappa^2-{b_r}}^3)
	\end{equation}
	for $\kappa \ge 1/\sqrt2$, and the opposite inequality for $\kappa < 1/\sqrt2$. 
	
	Indeed,	for $\dim K>1$, we have  $\beta(r)\le \br{\abs{\xi}^4}$ for all $\phi\in K,\,\br{\abs{\phi}^2}=1$. This, together with \eqref{5.17}, gives \eqref{1.12gen} depending on the sign of $\kappa^2-\frac12$. 
	
	One can also introduce the `upper Abrikosov function', 
	\begin{align}\label{beta+}
		\beta_+(r):=	{\max\Set{{\br{\abs{\xi}^4}}:\xi\in K(r),\br{\abs{\xi}^2}=1}.}
	\end{align}
	which leads to the upper energy bound, for $\kappa^2\ge1/2$,
	\begin{equation}
		\label{1.12+}
		{\cE(\psi_{s(r)},a_{s(r)},h_r)\le\cE(0,a^{b_r},h_r)}{}-\frac{{\abs{\Si}_r}}4 \frac{\abs{\kappa^2-{b_r}}^2}{(\kappa^2-\tfrac{1}{2})\beta_+(r)+\tfrac{1}{2}}+O(\abs{\kappa^2-{b_r}}^3).
	\end{equation}
	For $\kappa_{c+}(r):=\sqrt{\frac{1}{2}\del{1-\frac{1}{\beta_+(r)}}}$, estimate \eqref{1.12+} ensures that
	$\cE(\psi_{s(r)},a_{s(r)},h_r)<\cE(0,a^{b_r},h_r)$ for $\kappa>\kappa_{c+}(r)$.

\section*{Acknowledgment}
J. Zhang is supported by Danish National Research Foundation grant CPH-GEOTOP-DNRF151, The Niels Bohr Grant of The Royal Danish Academy of Sciences and Letters, and NSERC Grant No. NA7901.  The research of IMS is supported in part by NSERC Grant No. NA7901.

\section*{Declarations}
\begin{itemize}
	\item Conflict of interest: The Authors have no conflicts of interest to declare that are relevant to the content of this article.
				\item Data availability: Data sharing is not applicable to this article as no datasets were generated or analysed during the current study.
\end{itemize}

\appendix

\section{Standard definitions and notation} \label{sec:A}

We sketch very briefly some notions and definitions relevant for us .
To fix ideas, in what follows,  $E$ is a line bundle over a Riemann surface $\Si$,  with fibers isomorphic to $\C$ and the gauge group  $G=U(1)$.

\subsection{Connections}\label{sec:A.1}
{A connection (gauge field), $a$, is a real-valued one-form with certain transformation properties. 
For a connection $a$, defines the  covariant derivative, $\n_{a}$, on sections of $E$, which can be written locally as  $\n_{a} \psi=\grad \psi -i  a\psi$. 
Here $\grad$ is a fixed connection (say, the Levi-Civita one).}

A connection $a$ defines the curvature two-form  $F_{a}:= da$. 
A connection $a$ on $E$ is said to be a {\it constant 
	curvature connection} if its  curvature is of the form 
\begin{align} \label{const-curv-cond} da=b\om,   \end{align} 
for some $b\in \R$, where $\om$ is the standard hyperbolic $2$-form on $\Si$.

By \cite[{Lem. 3.2}]{MR4049917}, a constant 
curvature connection,  $a$, solves the ({\it static}) Maxwell equation 
\begin{equation}   \label{Maxw-eq}
	d^* d a = 0.
\end{equation}

Let $x^i,\,i=1,2$ be a local coordinate on $\Si$.
With the summation convention understood, we can write a connection (gauge field), $a$,
and the  covariant derivative, $\n_a$,  as   $a =a_{i} dx^i $ and  $\n_a= \n_i d x^i$, where  
$\n_j := \p_i -ia_j$.   
Let $d$ be the exterior derivative on $\Si$.
Then, for a section $\psi$ and 1-form $B =B_{i} dx^i$,   \[d_{a} \psi =\n_{i} \psi dx^i ,\ d B =\di_i B_j dx^i \wedge dx^j,\ d^* B =-\di_i B_i\] 
and the curvature, $F_a=d a$, of $a$ is given by  
\begin{align}\label{curv-expr}    &F_a= \big[\nabla_{i }, \nabla_{j } \big]d x^i \wedge d x^j,\ \quad 
	\big[\nabla_{i }, \nabla_{j } \big]
	=\p_i a_j-  \p_j a_i.
\end{align}

\subsection{{Automorphy factor}}\label{sec:2.2}
Let $\Si=\Hb/\Ga$ be a non-compact Riemann surface of the form \eqref{2.1}.
Let
$\pi_1(\Si)$ be the first fundamental group of $\Si$,
which we identify  with $\Ga$.
A map
$$\rho(\g,z):\pi_1(\Si)\times \Hb\cong \Ga\times \Hb\to U(1)$$ 
is called an \textit{automorphy factor} if it satisfies the
important co-cycle condition,
\begin{equation}
	\label{2.17}
	\rho(\g\g',z)=\rho(\g,\g'z)\rho(\g',z)\quad \del{\gamma,\g'\in\Ga,\, z\in \Hb}.
\end{equation}
For fixed $\g\in\Ga$, we often write $\rho_\g(z)\equiv \rho(\g,z)$ 
as a function from $\Hb$ to $U(1)$.

For every $b\in\Rb$, we fix a canonical choice of automorphy factor with weight $b$ as
\begin{equation}
	\label{2.18}
	\rho_b(\g,z)\equiv\rho_{b,\g}(z):=\del{\frac{c z+d}{c\bar z+d}}^{b}\quad\del{\gamma=\begin{pmatrix}a&b\\c&d\end{pmatrix}\in SL(2,\Rb)}.
\end{equation}
For results in this paper, we understood that 
$b\in\Z$, but \eqref{2.18}
can also be defined for non-integer values of $b$
with branch cuts.

The choice \eqref{2.18} is customary in the study of Maass forms in number theory,
see e.g. \cite[{Sec.2.1}]{MR1431508}.
In \thmref{thm:aut-factsEn}, we prove this choice of $\rho(\g,z)$ satisfies
the co-cycle condition \eqref{2.17}.
Note that $\rho(\g,z)$ remains bounded
as $z$ approaches the boundary $\Rb\cup\Set{\infty}$ of $\Hb$,
which contains the cusps of $\Si$.

Recall that the universal cover of the Riemann surface
$\Si$ is the Poincar\'e half-plane $\Hb$. 
Then using the co-cycle condition \eqref{2.17},
one can define a line bundle $E_\rho$ as 
\begin{equation}\label{erho}
	E_\rho := (\Hb \times \Cb)/\rho, \text{ with the action } \rho_\g:(z,\psi)\mapsto(\g z,\rho_\g(z)\psi).
\end{equation}
In fact, \eqref{erho} defines an one-to-one correspondence, $\rho\leftrightarrow E_\rho$, between (equivalence classes of)  automorphy maps for $\Si$ and line $U(1)$-bundles over $\Si$. See
\cite{MR0132828} for details.

Through the correspondence \eqref{erho}, one can define a topological degree for a unitary line bundle over $\Si$, which by the Chern-Weil correspondence is equal to the Chern number, or the degree of $E$,  as follows.  
Suppose $E=E_\rho$ as in \eqref{erho} for an automorphy factor 
$\rho$ satisfying \eqref{2.17}.
Suppose the Fuchsian group  $\Ga$ has genus $g$ and $m$ cusps, with no elliptic points. Then $\Ga$
is generated by $2g$ hyperbolic transforms,
$A_1,B_1,\ldots,A_g,B_g$, and $m$ parabolic transforms $S_1,\ldots,S_m$,
satisfying the relation
\begin{align} \label{gener-relat} A_1 B_1A_1^{-1} B_1^{-1} \dots A_g B_gA_g^{-1} B_g^{-1}S_1 \dots S_m=\one.
\end{align}

Now, for every $\g\in\Ga$, define a map 
$\si=\si_\g:\Hb\to\Rb$
by the relation $e^{i\si_{\g}(z)}:=\rho(\g,z)$. 
The first Chern number $c_1(\si)$ is defined by
\begin{align} \label{c1g}  c_1(\si):=&\frac{1}{2\pi}\sum_1^g [\si_{A_{i}}(v_i) - \si_{A_{i}}(B_{i}^{-1}v_{i}) +\si_{B_{i}}(B_{i}^{-1}v_{i})-\si_{B_{i}}(A_i B_{i}^{-1}v_{i})],
\end{align} 
where $v_{i}:=B_i A_{i}^{-1}B_{i}^{-1} C_{i+1}\dots C_g z_0= A_{i}^{-1} C_{i}\dots C_g z_0$, with $C_{i}:= A_{i} B_i A_{i}^{-1}B_{i}^{-1} $.   
By \cite[{Theorem 2A}]{MR0457787},
$c_1(\si)$ is independent of $z_0$, and takes values in $\Zb$.

Let $E$ and $E'$ be the line bundles corresponding to two automorphy factors $e^{i\si},\,e^{i\si'}$ respectively.
Then we say that $E$ and $E'$ are equivalent  if and only if
$c_1(\si)=c_1(\si').$

Hence, by the one-to-one correspondence \eqref{2.17},
$c_1(\si)$  is a  topological invariant for a unitary line
bundle $E=E_\rho$ over $\Si=\Hb/\Ga$. It turns out (see \cite{Gunning62}) that $c_1(\si)$ is equal to the topological degree of $E$:
\begin{equation}\label{deg}
	c_1(\si)=\deg E_{e^{i\si}}.
\end{equation}
(Recall that $\deg E$ is defined through multiplicity of zeroes of sections.)

\subsection{Equivariant States}\label{sec:2.4}

\begin{definition}\label{defn2.1}
	Let $\Ga$ be a Fuchsian group.
	A (function, $1$-form)-pair $(\Psi,A):\Hb\to \Cb\times \Rb^2$ is called an equivariant state w.r.t.~$\Ga$
	if and only if for all $\g\in\Ga$, $z\in\Hb$, the following relations hold:
	\begin{align}
		\label{2.15'}
		\g^*\Psi(z)&=\rho(\g,z)\Psi(z),\\
		\label{2.16'}
		\g^*A(z)&=A(z)+i\rho(\g,z)^{-1}d\rho(\g,z),
	\end{align}
	where $\g^*$ is the pull-back by the M\"obius transform \eqref{2.2}
	associated to $\g\in\Ga$, and $\rho(\g,z)$ 
	is an {automorphy factor},
	satisfying the {co-cycle} condition \eqref{2.17}
\end{definition}

\begin{remark}
	For a standard lattice $\cL\subset \Cb\cong \Rb^2$,  equivariant solutions to \eqref{GL}
	are known as the Abrikosov lattices,
	predicted by A.A. Abrikosov 
	in 1957 \cite{Abrikosov}. (This discovery was recognized by a Nobel prize.)
	These are ground state solutions to \eqref{GL}
	on the flat torus
	\begin{equation}\label{Tb}
		\T=\Cb/\cL,
	\end{equation}
	which is a compact Riemann surface of genus $1$.
	Existence and stability theory of solutions to  \eqref{GL}
	on $\T$ are studied in \cite{MR2560758,MR2904275,MR3123370,MR3758428}.
	Physically, these solutions correspond to regular arrays of vortices as seen
	in Type II superconductors.
	
	In \eqref{Tb},  the lattice $\cL$ acts on the complex plane
	by translation, and the action is commutative. 
	In comparison, with the background geometry \eqref{2.1}, the action of $\Ga$
	on the Poincar\'e half plane $\Hb$ by M\"obius transforms
	is in general non-commutative.
	In this sense, equivariant solutions to \eqref{GL}
	on \eqref{2.1} are non-commutative
	generalizations of the Abrikosov lattice.
\end{remark}


 Equivariant states $(\Psi,A)$ are in one-to-one correspondence with (section, connection)-pairs $(\psi,a)$ on the unitary complex 
 line bundle $E\to\Si$ through the explicit correspondence \eqref{erho}. 
 One can restrict $(\Psi,A)$ to a fundamental domain $F\subset\Hb$ of $\Ga$, with \eqref{2.15'}--\eqref{2.16'} considered as boundary conditions. 
 Although fundamental domain is not unique, every two fundamental domains $F,F'\subset \Hb$ of $\Ga$
 are related  by a M\"obius transform  $\g\in\Ga$, which sends $F\to F'$.
 Hence, 
 if  $(\Psi,A)$ is an equivariant state on $F$  satisfying \eqref{2.15'}--\eqref{2.16'}, 
 then the change of identification $F\to F'$ amounts to a gauge transform
 of the form 
 \begin{align}
 	\label{2.14}
 	(\Psi,A)\longrightarrow (\rho\Psi,A+i\rho^{-1}d\rho).
 \end{align}
 Since \eqref{GL} is invariant under \eqref{2.14}, such change
 has no consequence when one studies  the solution to \eqref{GL}.

 
The important correspondence formulated above allows us to interpret problems posed in $X^s$ in terms of equivariant states. For example, given an automorphy factor $\rho_b$ with weight $b$, let   $L^2(\Hb/\Ga)\equiv L^2(\Hb/\Ga,\rho_b)$ be the space of square integrable equivariant function satisfying \eqref{2.15'} with $\rho=\rho_b$ (see e.g.~\cite[Sect. 2.1]{MR1431508} for details). 

The equivariant states picture is useful for explicit computations. Below, we obtain an explicit description of the null space $K$ from \eqref{3.9} in terms of equivariant functions in $L^2(\Hb/\Ga)$ and prove inclusion \eqref{inclusion2}, using similar symmetrization argument as in  \cite{MR1942691}, which studies the same problem
with $b=0$. 
This method was introduced by Selberg in 1950s,
and is well-known to the number theorists.  

\begin{proposition}
	\label{prop3.2}
	Let $\Si=\Hb/\Ga$ be a non-compact Riemann surface with $m$ cusps and no elliptic points,
	with $\cS(\Si)\ne\emptyset$.
	Then the solution space to \eqref{3.2} in $L^2(\Hb/\Ga)$ is spanned by the vectors
	\begin{align}
		\label{3.10}
		\xi_i(z)&:=\sum_{\g\in\Ga/\Ga_i} \Im(\g_i\g z)^{b}e^{\g_i\g 2\pi i z}\rho_b(\g,z)^{-1},\quad i=1,\ldots,m
	\end{align}
	where  each
	$\Ga_i:=Stab(c_i,\Ga)$
	denotes the stabilizer of a distinct cusp $c_i$ of $\Ga$,
	and $\g_i\in SL(2,\Rb)$ is a scaling matrix of $c_i$.
	
	Moreover, there holds the asymptotics
	\begin{equation}
		\label{3.10'}
		\abs{\xi_i(\g_i^{-1}z)}=O(e^{-2\pi y})\quad (y\to\infty),
	\end{equation}
	which implies that $\xi_i(z)$ decays exponentially fast as $z$ approaches 
	the cusp $c_i$. 
\end{proposition}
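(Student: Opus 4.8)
The plan is to recognize the functions $\xi_i$ in \eqref{3.10} as cuspidal Poincar\'e series attached to the cusps $c_i$, and to prove the statement in three stages: (i) each $\xi_i$ is a well-defined equivariant eigenfunction, i.e. $\xi_i\in K=\Null(-\Lap_{a^b}-b)$ as in \corref{cor:Delta_A-part_A}; (ii) $\xi_i$ decays exponentially at $c_i$, which gives \eqref{3.10'} and membership in $L^2(\Hb/\Ga)$; and (iii) the $\xi_i$ span the full solution space.

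First I would isolate the seed. A direct computation in the coordinate $z=x+iy$ using \eqref{3.7} shows that $f(z)=\Im(z)^b e^{2\pi i z}=y^b e^{2\pi i x}e^{-2\pi y}$ satisfies $(-\Lap_{a^b}-b)f=0$ and decays like $e^{-2\pi y}$ as $y\to\infty$; this is the model bottom eigenfunction at the cusp $\infty$. Pulling it back by the scaling matrix $\g_i$ (which sends $c_i\mapsto\infty$) gives the seed $h_i(z)=\Im(\g_i z)^b e^{2\pi i\g_i z}$, so that \eqref{3.10} is the weighted average $\xi_i(z)=\sum_{\g\in\Ga/\Ga_i}h_i(\g z)\,\rho_b(\g,z)^{-1}$. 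I would then verify: (a) the summand is well defined on cosets $\Ga/\Ga_i$, using that $\g_i$ conjugates the generator of $\Ga_i=\mathrm{Stab}(c_i,\Ga)$ to the unit translation $z\mapsto z+1$, which leaves both $h_i$ and, via the cocycle relation \eqref{2.17}, the factor $\rho_b(\g,z)^{-1}$ invariant; (b) locally uniform absolute convergence, which follows from $|\rho_b|=1$ together with the exponential/polynomial bounds on $h_i$ over the orbit (this requires $b$ large enough, treating the borderline weight $2b$ by Hecke's trick if necessary); (c) equivariance \eqref{2.15'} with automorphy factor $\rho_b$, again from \eqref{2.17} and the invariance of the coset sum under $z\mapsto\delta z$; and (d) that $\xi_i$ solves the eigenvalue equation, since $-\Lap_{a^b}$ is $SL(2,\Rb)$-invariant (see \eqref{3.8}) and annihilates each term. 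Combined with \corref{cor:Delta_A-part_A}, this places $\xi_i\in K$.

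Next, for the decay \eqref{3.10'} I would compute the Fourier expansion of $\xi_i$ at the cusp by the standard unfolding argument: the identity coset $[\mathrm{id}]$ contributes exactly the seed $y^b e^{2\pi i z}$ at $c_i$, i.e. the single Fourier mode $n=1$, while the remaining cosets produce only modes $n\ge 1$ and, crucially, no constant ($n=0$) term. Thus $\xi_i$ has vanishing constant Fourier coefficient at every cusp, i.e. it is a genuine cusp form; by the identification of $L^2$ holomorphic sections with cusp forms in Theorem~\ref{thm3.1}(c) and \cite{MR1942691,MR2112196}, this yields $\xi_i\in L^2(\Hb/\Ga)$ and the leading behaviour $|\xi_i(\g_i^{-1}z)|=O(y^b e^{-2\pi y})$ as $y\to\infty$, which is the asserted exponential decay \eqref{3.10'}.

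The main obstacle is the spanning claim. Here I would pair an arbitrary solution $\phi\in K$ against $\xi_i$ in the Petersson inner product and unfold, expressing $\langle\phi,\xi_i\rangle$ as a multiple of the first Fourier coefficient $a^{(i)}_1(\phi)$ of $\phi$ at $c_i$; this reproducing property is what links the $\xi_i$ to the whole solution space. Completeness would then follow from the classical theory of Poincar\'e series (Petersson): the span of the cuspidal Poincar\'e series exhausts $\cS_k\cong K$, so a solution orthogonal to every $\xi_i$ has vanishing Fourier data and hence vanishes. I expect the delicate points to be the convergence at the borderline weight and the careful book-keeping in the unfolding that shows the off-diagonal cosets contribute no constant term; in addition, the count relating the number of cusps $m$ to $\dim\cS_k=\dim K$ must be reconciled with the completeness argument, which in the degenerate case $\dim\cS_k>m$ is handled by enlarging the family to the higher-frequency Poincar\'e series with seeds $e^{2\pi i n z}$, $n\ge1$.
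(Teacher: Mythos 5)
Your construction and decay stages coincide with the paper's own proof. The paper proceeds exactly as you do: it isolates the seed $\phi=y^b e^{-2\pi y}e^{2\pi i x}$ as a solution of the cusp-periodic model problem, forms the twisted Poincar\'e series \eqref{3.12} over the cosets of $\Ga_i$ in $\Ga$, gets equivariance from the cocycle identity \eqref{2.17} and the eigenvalue equation from the invariance \eqref{3.8} of $-\Lap_{a^b}$, and then deduces \eqref{3.10'} from the classical Fourier expansion \eqref{3.13} in Whittaker functions, citing \cite{MR750670} rather than unfolding by hand as you propose. The genuine difference is your stage (iii). The paper's proof stops after asserting that the $\xi_i$ are equivariant $L^2$ solutions: it never argues that they span $\Null(-\Lap_{a^b}-b)$, leaving the spanning claim to rest implicitly on the identification of this null space with cusp forms in \thmref{thm3.1}(c). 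Your Petersson-unfolding argument, expressing $\langle\phi,\xi_i\rangle$ through the first Fourier coefficient of $\phi$ at $c_i$ and invoking completeness of Poincar\'e series, supplies exactly the step the paper omits. Moreover, your closing caveat is well taken and is really a criticism of the statement itself: the model problem \eqref{evp1}--\eqref{evp2} admits the solutions $y^b e^{2\pi i n z}$ for every integer $n\ge1$, not only $n=1$, so the $m$ frequency-one series can span the solution space only when $\dim\cS_{2b}(\Si)$ does not exceed the number of available first Fourier coefficients; in the degenerate case one must enlarge the family to seeds $e^{2\pi i n z}$, $n\ge1$, for which Petersson completeness holds. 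The only points you should still nail down are the convergence of the series for small weight (your Hecke-trick remark; the paper is also silent on this) and the well-definedness of the coset sum at finite cusps, where $\rho_b(\delta,\cdot)$ for parabolic $\delta$ contributes a phase that must be checked to be trivial; neither affects the overall architecture, which improves on, rather than deviates from, the paper's argument.
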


\begin{proof}

	Suppose a function
	$\phi:\Hb\to\Cb$ given by  
	$\phi(x,y)\equiv \phi(x+iy)$ satisfies 
	\begin{equation}
		\label{3.11}
		\phi(x+1,y)=\phi(x,y)\quad \text{ for every }y>0.
	\end{equation}
	Then for every cusp $c_i$, we can form the Poincar\'e series 
	\begin{equation}
		\label{3.12}
		E_{c_i,\phi}(z):=\sum_{\g\in\Ga/\Ga_i} \phi(\g_i\g z)\rho^{-1}(\g,z).
	\end{equation}
	This series \eqref{3.1} converges absolutely if $\phi$ satisfies 
	certain growth condition in $y$.

	The Poincar\'e series has two important properties:
	\begin{enumerate}[label=(\alph*)]
		\item By construction,
		$E_{c_i,\phi}$ satisfies the equivariance condition \eqref{2.15'};
		\item If $L$ is an invariant operator acting on $\Hb/\Ga$
		{in the sense that \begin{equation}\label{3.8}
				\Delta_{a^b}(\g^*\phi)=\rho(\g,z)(-\Lap_{a^b}\phi)
			\end{equation}for every $\g\in\Ga,\,\phi\in \cH^s$,} and $L\phi=\l\phi$ for some $\l\in\C$,
		then $LE_{c_i,\phi}=\l E_{c_i,\phi}$.
	\end{enumerate}
	In view of these properties, to solve the eigenvalue problem \eqref{3.2} in $L^2(\Hb/\Ga)$,
	we first solve the following problem for some $\phi\in C^2(\Hb,\Cb)$:
	\begin{align}
		(-\Lap_{a^b}-b)\phi &=0\label{evp1},\\
		\phi(x+1,y)&=\phi(x,y),\quad \lim_{y\to\infty}\phi(x,y)=0.\label{evp2}
	\end{align}

	In the boundary condition \eqref{evp2}, the periodicity in $x$
	ensures that  the symmetrization is well-defined,
	c.f. \eqref{3.11}.
	The decay property in $y$ is required 
	to ensure convergence of the Poincar\'e series \eqref{3.12}.
	
	The solution to \eqref{evp1}--\eqref{evp2} is proportional to
	the function
	$$\phi:=y^be^{-2\pi y}e^{i2\pi x}.$$
	This is calculated 
	in e.g. \cite{MR776146,MR870891}.
	Forming Poincar\'e series \eqref{3.12} w.r.t. each of the cusps of $\Si$ 
	gives \eqref{3.10}. By construction, \eqref{3.10} are equivariant solution to \eqref{3.2}.
	
	By the classical results for the Fourier analysis of Poincar\'e series,
	e.g. \cite[{Sect. 2}]{MR750670}, there holds the Fourier expansion 
	\begin{equation}
		\label{3.13}
		\xi_i(\g_i^{-1}z)=\sum_{k\ne0}A_kW_{b\sgn (k),b-\tfrac{1}{2}}(4\pi\abs{n}y)e^{2k\pi i x}.
	\end{equation}
	Here $W_{\beta,\mu}(y)=O(e^{-y/2})$ is the Whittaker function,
	a decaying solution to the ODE
	$$W''(y)+\del{-\frac{1}{4}+\frac{\beta}{y}+\frac{1/4-\mu^2}{y^2}}W(y)=0.$$
	This 
	$W_{\beta,\mu}(y)$ can be expressed in terms of the modified Bessel function of the second kind
	\cite{MR1773820}.
	Expansion \eqref{3.13} implies the asymptotics \eqref{3.10'}.
	
\end{proof}

\begin{remark}
	Estimates for the coefficients $A_k$ in \eqref{3.13} are of significant interest in number theory, and have been obtained in \cite{MR870736,MR931205}.
\end{remark}


\begin{proof}[Proof of \eqref{inclusion2}]
	We use the same symmetrization method as in the proof of \propref{prop3.2}.
	First, we seek $C^2(\Hb,\Cb)$-solutions to the eigenvalue problem 
	\begin{align}
		(-\Lap_{a^b}-\l)\psi &=0\label{evp1'},\\
		\psi(x+1,y)&=\psi(x,y).\label{evp2'}
	\end{align}
	Notice here we do not require the decay condition in $y$, c.f. \eqref{evp2}.
	The problem \eqref{evp1'}--\eqref{evp2'} has 
	has the following family of solutions:
	\begin{equation}\label{phik}
		\phi_k:=y^{1/2-ik}\quad (k\ge 0).
	\end{equation}
	These $\phi_k$'s are the generalized eigenfunctions, 
	which correspond  to the spectral points 
	\begin{equation}\label{lamk}
		\lam_k:=k^2 + \frac14+ b^2\quad (k\ge 0).
	\end{equation}
	
	Next, fix a cusp $c_i$ of $\Si$ and some $k\ge0$.
	For $n=1,2,\ldots,$ let $u_n:=\chi_n\phi_k$, where $\chi_n\in C_c^2(\Rb_{>0}, \Rb_{\ge0})$
	is a sequence of cut-off functions with 
	\begin{equation}\label{chi}
		\chi_n(y)\equiv \left\{\begin{aligned}
				&1\quad (n\le y \le n+1),\\
				&0\quad (0<n-\tfrac{1}{n}\le y\text{ or } y\ge n+\tfrac{1}{n}).
			\end{aligned} \right.
	\end{equation}

	Now, we form
	the Poincar\'e series \eqref{3.12} w.r.t. $c_i$ and $u_n$.
	Since $u_n$ has compact support, $E_{c_i,u_n}$ converges 
	absolutely for every $n$. Moreover, each $E_{c_i,u_n}$
	is an incomplete  Eisenstein series, which is 
	bounded and satisfies the equivariant condition \eqref{2.15'}.
	(See \cite[{Sec. 3.2}]{MR1942691} for a discussion.)
	Hence $E_{c_i,u_n}\in \cL^2$ and satisfies \eqref{evp1}
	except for on the two bounded strips $\Set{(n-1)/n<y<n}$ and $\Set{n<y<(n+1)/n}$. 
	
	Finally, let $$\tilde u_n:= \norm{E_{c_i,u_n}}_{L^2}^{-1}E_{c_i,u_n}.$$
	Then $\tilde u_n\in L^2(\Hb/\Ga)$ forms a Weyl sequence for $-\Lap_{a^b}$ and $\l_k$,
	c.f. \eqref{weyl}. This shows that $\l_k\in\si_{\rm ess} (-\Lap_{a^b})$.
	Varying $k$, we find $$[\tfrac{1}{4}+b^2)\subset\si_{\rm ess} (-\Lap_{a^b}).$$
\end{proof}


\section{Classification of  $U(1)$-Automorphy Factors and Constant Curvature $U(1)$-Connections} \label{sec:aut-fact-ccc}

In this section, we reproduce with minor modifications results of \cite{MR4049917} on classification of automorphy factors and constant curvature connections.  

Recall that a character of $\G$ is a homomorphism $\chi: \G  \rightarrow U(1)$. For other standard definitions, see  Appendix \ref{sec:A}.

\begin{theorem}[classification of automorphy factor] \label{thm:aut-factsEn}
	For any $\beta\in\Rb$,  the map $\rho_{\beta}: PGL(2, \R)\times \bH\ra U(1)$, given by 
	\begin{align}\label{n-automor}
		\rho_{\beta}(\g, z) & = \biggr[\frac{c  +d}{c \bar z + d}\biggl] ^{\beta },\quad \g=\left(\begin{smallmatrix}a&b\\c&d\end{smallmatrix}\right)  \in PGL(2, \R), 
	\end{align}
	satisfies the co-cycle condition \begin{align*}
		\rho_\beta(s \cdot t, z) &=  \rho_\beta(s, t \cdot z)\rho_\beta(t,z),\quad \forall s, t \in PGL(2, \R).
	\end{align*}   

	Consequently,  for any Fuchsian group $\G$ s.t. $\Si:=\bH/\G$ has finite area and $b:=\frac{2\pi n }{|\Si|}$, and any character $\chi: \G  \rightarrow U(1)$,  
	the map $\rho_{b, \chi}: \G\times \bH\ra U(1)$ given by   
	\begin{align}\label{n-sig-automor}
		\rho_{b, \chi}(\g, z) & = \chi(\g) \rho_{b}(\g, z), \quad
		\g \in \G, 
	\end{align}
	is also an automorphy factor. 
	Two automorphy factors related as in \eqref{n-sig-automor} are said to be equivalent.
	
	Let $n\in \Z$ and $b:=\frac{2\pi n }{|\Si|}$.  
	Then, the first Chern number of $\rho_{b, \chi}$ is $c_1(\rho_{b, \chi})=n$. Hence  any  automorphy factor $\rho:\G\times \bH\ra U(1)$ of degree $n$ is equivalent to $\rho_{b}$ as in \eqref{2.18}. 
\end{theorem}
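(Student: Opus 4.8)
The plan is to prove the three assertions in turn, with everything resting on the classical $SL(2)$-cocycle $j(\gamma,z):=cz+d$ for $\gamma=\left(\begin{smallmatrix}a&b\\c&d\end{smallmatrix}\right)$, which obeys the chain rule $j(st,z)=j(s,tz)\,j(t,z)$. Since $c,d\in\Rb$ one has $c\bar z+d=\overline{j(\gamma,z)}$, so that
\[
\rho_\beta(\gamma,z)=\Bigl(\tfrac{j(\gamma,z)}{\overline{j(\gamma,z)}}\Bigr)^{\beta}=e^{2i\beta\arg j(\gamma,z)}\in U(1).
\]
The chain rule gives $\tfrac{j(st,z)}{\overline{j(st,z)}}=\tfrac{j(s,tz)}{\overline{j(s,tz)}}\cdot\tfrac{j(t,z)}{\overline{j(t,z)}}$, and raising to the power $\beta$ yields the co-cycle identity. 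For $\beta=b\in\Zb$ (the only case used in the paper) the power is single-valued and this is immediate; for general real $\beta$ one fixes the branch of $\arg$ in $(-\pi,\pi]$, which is legitimate because $j(\gamma,z)=cz+d$ never vanishes on $\bH$ (its imaginary part is $c\,\Im z$, and if $c=0$ then $d\ne0$ by $\det\gamma\ne0$), and then checks the additive relation for $\arg$ modulo the branch. The second assertion is then immediate: since $\chi$ is a character, $\chi(\gamma\gamma')=\chi(\gamma)\chi(\gamma')$ is independent of $z$, so
\[
\rho_{b,\chi}(\gamma\gamma',z)=\chi(\gamma\gamma')\rho_b(\gamma\gamma',z)=\bigl[\chi(\gamma)\rho_b(\gamma,\gamma'z)\bigr]\bigl[\chi(\gamma')\rho_b(\gamma',z)\bigr]=\rho_{b,\chi}(\gamma,\gamma'z)\,\rho_{b,\chi}(\gamma',z),
\]
so $\rho_{b,\chi}$ is again an automorphy factor.

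For the Chern number, writing $e^{i\sigma_\gamma}=\rho_{b,\chi}(\gamma,\cdot)$ splits $\sigma_\gamma(z)=2b\arg j(\gamma,z)+\arg\chi(\gamma)$, with the second summand constant in $z$. Every term of the defining sum \eqref{c1g} is a difference $\sigma_\gamma(\cdot)-\sigma_\gamma(\cdot)$ in the \emph{same} $\gamma$, so these $z$-independent constants cancel pairwise; hence $c_1(\rho_{b,\chi})=c_1(\rho_b)$, and it suffices to evaluate $c_1(\rho_b)$. Rather than expand \eqref{c1g} by hand, I would invoke the identity \eqref{deg}, $c_1(\sigma)=\deg E_{e^{i\sigma}}$, together with the Chern–Weil correspondence \thmref{thm2.1}: the bundle $E_{\rho_b}$ carries the constant curvature connection $a^b$ of \eqref{3.6}, so
\[
c_1(\rho_b)=\deg E_{\rho_b}=\frac1{2\pi}\int_\Si da^b=\frac{b}{2\pi}\int_\Si\om=\frac{b\,\abs{\Si}}{2\pi}=n,
\]
using $da^b=b\om$ and $b=2\pi n/\abs{\Si}$. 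This yields $c_1(\rho_{b,\chi})=n$.

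Finally, for the classification statement I would argue that $c_1$ is a complete invariant. Given any automorphy factor $\rho$ with $c_1(\rho)=n$, the quotient cocycle $\tilde\rho:=\rho\,\rho_b^{-1}$ is $U(1)$-valued and, because $c_1$ is additive in $\rho$ (the same cancellation of $z$-constants shows $\sigma_{\rho\rho'}$ and $\sigma_\rho+\sigma_{\rho'}$ give the same value in \eqref{c1g}), satisfies $c_1(\tilde\rho)=n-n=0$. The associated line bundle $E_{\tilde\rho}$ is therefore topologically trivial; since topological $U(1)$-bundles over the surface $\Si$ are classified by their Euler class in $H^2(\Si;\Zb)\cong\Zb$, $E_{\tilde\rho}$ is isomorphic to the trivial bundle, which forces $\tilde\rho$ to be equivalent to a character, i.e.\ $\rho=\chi\,\rho_b$ up to this equivalence. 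I expect this last step to be the main obstacle: one must pin down exactly which cocycles with $c_1=0$ count as equivalent to a character, reconciling the character-only equivalence of \eqref{n-sig-automor} with the coboundary (gauge) freedom \eqref{2.14}. This is precisely where the correspondence $\rho\leftrightarrow E_\rho$ of Section~\ref{sec:2.2} and the topological triviality of degree-zero $U(1)$-bundles over $\Si$ enter, following \cite{MR4049917}.
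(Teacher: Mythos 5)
Your proposal is correct to (at least) the same standard as the paper's own proof, but it reaches the degree computation by a genuinely different route. On the co-cycle identity you and the paper do the same thing in different clothing: the paper expands the matrix entries of $st$ and refactors, which is precisely the chain rule $j(st,z)=j(s,tz)\,j(t,z)$ you start from (you also silently correct the typo $c+d$ for $cz+d$ in \eqref{n-automor}, clearly intended in view of \eqref{2.18}); both arguments then raise a product of unimodular numbers to the power $\beta$, an operation that is unambiguous only for $\beta\in\Zb$ and can otherwise fail by a branch factor --- you flag this, the paper does not (it quietly restricts to integer weights elsewhere). The genuine divergence is the Chern number: the paper invokes Gunning's formula for $c_1$ of a co-cycle (\cite{MR0457787}, Theorem 2a) and simply asserts $c_1(\rho_\beta)=n$, whereas you (i) note that characters cancel in \eqref{c1g} --- correct, since each $\si_{A_i}$ and $\si_{B_i}$ enters twice with opposite signs, so $z$-independent summands drop out --- and (ii) evaluate $c_1(\rho_b)$ by feeding the constant curvature connection $a^b$ of \thmref{thmB.2} into the Chern--Weil theorem \thmref{thm2.1} together with \eqref{deg}, so that $c_1(\rho_b)=\frac{1}{2\pi}\int_\Si b\,\om=n$. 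This is legitimate and non-circular: the proofs of \thmref{thm2.1} and of the curvature/equivariance parts of \thmref{thmB.2} nowhere use the degree claim being proved, and you establish the co-cycle identity beforehand; what it buys is replacing an opaque citation by the paper's own flux-quantization machinery (and making transparent why $\chi$ cannot affect the degree), at the cost of leaning on results that appear later in the paper's linear ordering. Finally, on the classification claim, the obstacle you flag --- that a degree-zero factor is only a character up to a coboundary --- is a real issue if ``equivalent'' is read literally as $\rho=\chi\rho_b$, but it is moot relative to the paper's conventions: in \secref{sec:2.2} equivalence of factors/bundles is \emph{defined} by equality of Chern numbers, under which the final assertion is immediate from $c_1(\rho)=n=c_1(\rho_b)$, and the paper's proof says nothing about this step (it defers to Gunning). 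So on that point your treatment is, if anything, more complete and more honest than the paper's.
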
 
\begin{proof}Let 
	$s=\left(\begin{smallmatrix}a&b\\c&d\end{smallmatrix}\right),\ t=\left(\begin{smallmatrix}e&f\\g&h\end{smallmatrix}\right)  \in \G$. Using \eqref{n-automor}, we compute \textit{for any $\beta$}, 
	\begin{align*}
		\rho_\beta(s \cdot t, z) &= 
		\biggr[\frac{(ce + dg)z +(cf + dh)}{(ce +dg)\overline{z} + (cf + dh)}\biggl] ^{\beta} \\
		&= 
		\biggr[\frac{c(e z + f)  +d(g z + h) }{(c(e\overline{z} + f) + d(g\overline{z} + h)}\biggl] ^{\beta} \\
		&= 
		\biggr[\frac{c\frac{e z + f}{g z + h} +d}{c\frac{e\overline{z} + f}{g\overline{z} + h} + d}\biggl]^{\beta}  \biggr[\frac{g z + h}{g\overline{z} + h}\biggl]  ^{\beta} \\
		&= \rho_\beta(s, t \cdot z)\rho_\beta(t,z).
	\end{align*}
	Using the formula for  the Chern class, $c_1(\rho)$, of  a co-cycle $\rho$ (see \cite{MR0457787}, Theorem 2a), we compute $c_1(\rho_\beta)=n$, provided $\beta=\frac{2\pi n }{\abs{\Si}}$, where $\abs{\Si}$ is the area of $\Si$ w.r.t. to the standard hyperbolic
	metric.
\end{proof}

\begin{theorem}[classification of constant curvature connections]\label{thmB.2}
	For any $b\in\Rb$, the connection 
	\begin{align}\label{Ab'} A^b =b y^{-1}dx,\end{align} 
	on the trivial line bundle $\tilde E := \bH\times \C$  
	
	(a) has a constant curvature with respect to the standard hyperbolic area form on $\bH$;   
	
	(b) is equivariant with respect to the automorphy factor \eqref{n-sig-automor} for any Fuchsian group $\G$ and any character $\chi: \G  \rightarrow U(1)$;  
	
	(c)  is  unique (as a connection on $\tilde E := \bH\times \C$) up to gauge transformations \eqref{gaugetransf}. \end{theorem}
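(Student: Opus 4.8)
The plan is to establish (a), (b), (c) in turn: the first is a one-line curvature computation, the second a direct verification of the equivariance/cocycle identity, and the third an application of the Poincar\'e lemma on the contractible space $\bH$.

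\emph{Part (a).} I would compute the curvature directly. Since $F_{A^b}=dA^b=d(by^{-1}dx)=-by^{-2}\,dy\wedge dx=by^{-2}\,dx\wedge dy$, it remains only to identify $y^{-2}\,dx\wedge dy$ with the standard hyperbolic area form. Writing $z=x+iy$ one has $dz\wedge d\bar z=-2i\,dx\wedge dy$, so $\om\equiv\frac{i}{2(\Im z)^2}dz\wedge d\bar z=y^{-2}\,dx\wedge dy$. Hence $dA^b=b\om$, i.e.\ $A^b$ has constant curvature, and taking the Hodge star gives $*F_{A^b}=b$, consistent with \corref{cor:Delta_A-part_A}.

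\emph{Part (b).} I would verify the equivariance relation \eqref{2.16'} by substitution. The two ingredients are the transformation laws under a M\"obius map $w=\g z=\frac{az+b}{cz+d}$ with $ad-bc=1$: namely $dw=(cz+d)^{-2}dz$ and $\Im w=y\abs{cz+d}^{-2}$. Substituting these into $\g^*A^b=b(\Im w)^{-1}\,d(\Re w)$ and simplifying with $\abs{cz+d}^2=(cz+d)(c\bar z+d)$ and $\bar z-z=-2iy$, the pullback collapses to
\[
\g^*A^b-A^b=-ibc\left(\frac{dz}{cz+d}-\frac{d\bar z}{c\bar z+d}\right).
\]
On the other side, I would compute the logarithmic derivative of the automorphy factor $\rho_{b,\chi}$ from \eqref{n-sig-automor}. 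As the character $\chi(\g)$ is constant in $z$, it drops out of $\rho^{-1}d\rho$, so only $\rho_b$ from \eqref{2.18} contributes, giving $\rho_b^{-1}d\rho_b=bc\big(\frac{dz}{cz+d}-\frac{d\bar z}{c\bar z+d}\big)$. Matching the two expressions verifies \eqref{2.16'} (the overall sign being a matter of the convention adopted for $\rho$ and for the covariant derivative $\n_a=\n-ia$). Crucially, nothing in this computation refers to the particular $\Ga$ or $\chi$, which is exactly why equivariance holds for every Fuchsian group and every character, as claimed.

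\emph{Part (c).} For uniqueness, suppose $A'$ is any connection on $\tilde E=\bH\times\C$ with the same constant curvature, $dA'=b\om=dA^b$. Then $A'-A^b$ is closed; since $\bH$ is contractible, $H^1(\bH)=0$ and the Poincar\'e lemma yields a smooth real $\phi$ with $A'-A^b=d\phi$. The $U(1)$-gauge transformation \eqref{gaugetransf} with $g=e^{i\phi}$ shifts $A^b$ by precisely this exact form, so $A'$ and $A^b$ lie in the same gauge orbit. The only input here is the vanishing of $H^1(\bH)$.

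The one genuinely delicate step is (b): the bookkeeping of the M\"obius pullback of the connection $1$-form and the matching of its normalization and sign against the definition \eqref{2.18} of $\rho_b$. Parts (a) and (c) are routine once the area form is identified and the contractibility of $\bH$ is invoked.
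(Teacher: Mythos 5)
Your proposal is correct and follows essentially the same route as the paper: a direct curvature computation for (a), the M\"obius pullback identities $dw=(cz+d)^{-2}dz$ and $\Im w=y\abs{cz+d}^{-2}$ matched against the logarithmic derivative of $\rho_b$ for (b) (with the character dropping out, exactly as the paper's proof shows), and closedness plus contractibility of $\Hb$ for (c). The only cosmetic differences are that the paper runs the computations in (a)--(b) through the complexified connection $A_c^b$ and its $(0,1)$-part rather than the real $1$-form, and the sign discrepancy you flag against \eqref{2.16'} also appears between that equation and the paper's own proof, so it is a convention issue rather than a gap.
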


\begin{remark}\label{rem:af-gauge-equiv} 
	The description in the above theorem does not depend at all on the complex structure of the underlying Riemann surface.  
	Hence, if $E_{n,\chi}$ is the unitary line bundle
	corresponding to the automorphy factors \eqref{n-sig-automor}, then
	the projection of $A^b$ to $E_{n, \chi}$ gives the distinguished connection $a^{b, \chi}$ on $E_{n, \chi}$.
\end{remark}

\begin{proof} [Proof of Theorem \ref{thmB.2}]
	We begin with some preliminary constructions. 
	We consider the trivial bundle, $\tilde E:= \bH \times \mathbb{C}$ with the standard complex structure on $\bH$ associated to the standard hyperbolic metric $\tilde h=  (\im z)^{-2} |dz|^2$. 
	
	Since we work here on a global product space, it is natural to take the fiber metric to be induced from the metric on the base. So we take the metric on the fiber $\mathbb{C}$ over the point $z \in \bH$ to be $k_z  = (\im z)^{-2} |dw|^2$, where $w$ is the coordinate on the fiber $\mathbb{C}_z$.
	
	Let the connection $A$ be given by $A:=A_1 d x_1+A_2d x_2$. We decompose the covariant derivative  $\nabla_A$ into $(1, 0)$ and $(0, 1)$ parts as $\nabla_A=  \partial_A' +  {\partial_A''}$, where $\partial_A'$ and $\partial_A''$ are defined in \eqref{pA}--\eqref{compl-conn}.

	Recall from Section \ref{sec:3.3} that, in terms of $A_c$, the curvature is given by   $F_A=2 \re \bar \p A_c$. Moreover, if $A_c$ satisfies the equivariance relation $s^* \bar A_c = \bar A_c  - i \overline{\partial}\tilde f_s$, then $A$ satisfies $s^* A = A  + d f_s$, with $f_s$ satisfying $d f_s:= 2 \im \overline{\partial}\tilde f_s$.
	
	According to \eqref{compl-conn}, the  complexification of the connection $A^b$ given in the theorem   is 
	\[A_c^b = b \frac{ 1}{2\im(z)}d z.\]

	In the remaining of this proof, we omit the superindex $b$ in $A^b$ and $A_c^b$.

	\textit{Proof of constant curvature.} 
	Using that $\omega = \frac{i}{2}\Im(z)^{-2} dz\wedge d\overline{z}$, we find
	\begin{align*}
		\bar \p  A_c &= \frac{\partial}{\partial \bar z} \frac{i b}{z - \overline{z}}d\bar z\wedge d z 
		=  \frac{- i b}{(z - \overline{z})^2}dz\wedge d\overline{z} = \frac{i b}{4\Im(z)^2}dz\wedge d\overline{z} = \frac{ b}{2}\omega.
	\end{align*}
	Since $F_A= 2 \re \bar \p A_c$, this gives the desired result.

	\textit{Proof of uniqueness.}
	If $A$ and $B$, satisfy $dB = dA$ then $d(A-B) = 0$. It follows from the simple connectedness of $\mathbb{H}$ 
	that $A-B = df$ for some function $f: \mathbb{H} \rightarrow \mathbb{R}$ and $f$ is unique up to an additive constant.  So we can map $B$ to $A$ through a suitable reparametrization.  
	This completes the proof.
	
	\textit{Proof of equivariance.}  For a generic isometry $s(z) =  \frac{\al z + \bet}{\g z +\delta}$, we have $\frac{\partial s(z)}{\partial z}=(\g z +\delta)^{-2}$ and $\Im(s(z))=\frac{\Im(z)}{ |\g z + \delta|^2} $, which gives 
	\begin{align*}
		s^* \bar A_c &= \frac{ b}{2\Im(s(z))}\overline{\frac{\partial s(z)}{\partial z}}d\overline{z}= \frac{ k |\g z + \delta|^2}{2\Im(z)} \frac{d\overline{z}}{(\g \overline{z} + \delta)^2}  \\
		&= \frac{ b}{2\Im(z)}\frac{\g z + \delta}{\g \overline{z} + \delta}d\overline{z}= \frac{ b}{2\Im(z)}d\overline{z} + \frac{ b (\g z - \g \overline{z})}{2\Im(z)(\g \overline{z} + \delta)} d\overline{z} \\
		& =\bar  A_c + \frac{i b \g}{\g \overline{z} +\delta} d\overline{z}  =: \bar A_c  +  \overline{\partial}\tilde f_s,\end{align*} 
	where $\tilde f_s$ is the function defined by the last relation, i.e. $ \overline{\partial}\tilde f_s=\frac{i b \g }{\g \overline{z} +\delta}d\overline{z}.$
	Solving this equation, we find \[\tilde{f_s} = b\text{ln}(\g \overline{z} + \delta)+c_s.\] 
	Now, we define $f_s := 2\re \tilde{f_s}$ and use that $\re (\bar\p\tilde{f_s}) =d (\re \tilde{f_s})$ (as can be checked by the direct computation: $\frac1b \re (\bar\p\tilde{f_s}) = - \frac{\g^2 x_2}{|\g z +\delta|^2} dx_1 + \frac{\g (\g x_1 +\delta)}{|\g z +\delta|^2} d x_2$) to obtain $s^* A =  A  + d f_s$, with
	\begin{align}\label{gs}
		& f_s = 2\Re(\tilde{f_s}) = i b \text{ln}\biggr[\frac{\g \overline{z} +\delta}{\g z + \delta}\biggl] +c_s,\ 
		\rho(s, z) = e^{if_s(z)} =e^{i c_s}  \biggr[\frac{\g \overline{z} +\delta}{\g z + \delta}\biggl] ^{-b}.
	\end{align}
	Here note that since $\mathbb{H}$ is the upper half plane, the complex logarithm is well defined and $\g z + d$ is always non zero.   \end{proof}

\begin{remark} 
The function $\tilde f_s (z)$ appearing above gives the character  $\tilde \rho (z) = e^{\tilde f_s (z)}$, which  is now $\mathbb{C}^*$-valued instead of $U(1)$-valued. 
\end{remark}
\begin{remark}

	$A^b$ is $\mathbb{R}$ - linear while $A^c$ is naturally $\mathbb{C}$ - linear. It is natural to ask what the action of $i$ on $A^n_c$ does when we map back to $A^n$. A simple calculation shows that $iA^b_c = 
	i \frac b2 \frac{ 1}{\im(z)}d z$ is mapped to $b y^{-1}dy$, which is flat. It turns out that the complex action of $i$ induces a rotation into the space of flat connections. 
\end{remark}


%

\section{Chern-Weil Correspondence} \label{sec:flux-quant-pf}

\begin{proof}[Proof of \thmref{thm2.1}] 
	The argument below, due to D. Chouchkov, is streamlined from classical results from \cite[Chap. II.4]{MR0457787}.
	

%
	{Let $F_\Si\subset \Hb$ be a fundamental domain of $\Ga$.}
	Let $A_{i}, B_{i}, i=1, \dots g,$ and $S_{i}, i=1, \dots m,$ be the hyperbolic and parabolic generators of $\G$, respectively (no elliptic ones by assumption).
	Let $\al_i, \al'_i, \beta_i, \beta'_i, i=1, \dots g,$ and $\delta_j, \delta_j', j=1, \dots m$ be the sides of $ F_\Si$.
	
{	As
	in the beginning of \secref{sec:3},
	we have the following relations:
	\[A_{i}^*\al_{i} = -\al_{i}', B_{i}^*\beta_{i} = -\beta_{i}', i=1, \ldots g,\text{ and } S_{j}^*\delta_j = -\delta_j', j=1, \dots m.\] 
	as well as the decomposition
	\begin{align} \label{dD'} \p F_\Si=\sum_{i=1}^g(\al_i + \al'_i + \beta_i + \beta'_i)+\sum_{j=1}^m (\delta_j + \delta_j').
	\end{align}Here the plus sign denotes disjoint union.}

{	Let $A$ be an equivariant $1$-form on $F_\Si$ that
	corresponds to the connection $a$ on $E\to\Si$,
	as in Sections \ref{sec:2.2}--\ref{sec:2.4}.
	By definition \eqref{deg}, the theorem is proved once we establish
	$$\frac{1}{2\pi}\int_{\di F_\Si}A=c_1(\si).$$
}
	
	Since the  $1$-form  $A$ is gauge-equivariant, we have 
	\begin{equation}\label{gauge-per-Gam} 
		\g^*A = A - i \rho_\g^{-1} d \rho_\g ,\ \forall \g\in \Ga,\,z\in \di F_\Si. 
	\end{equation}
	This, together with the definition $\rho (\g, z)=e^{i \si_\g (z)}$, implies
	\[\int_{\al_{i} +\al_{i}'} A= \int_{\al_{i} } (A- A_{i}^*A)=- \int_{\al_{i} } d \si_{A_{i}}=\si_{A_{i}}(v_i)-\si_{A_{i}}(v_{i+1}) \quad (i=1, \dots g),\] 
	where $v_{i}, v_{i+1}$ are the vertices of $F_\Si$ spanned by the side $\al_{i}$. Similarly we proceed with $\int_{\beta_{i} +\beta_{i}'} A$ and $\int_{\delta_j +\delta_j'} A$. In the last case, since the gauge exponent $\si_{S_{i}}(z)$ is independent of $z$, we have $\int_{\delta_j +\delta_j'} A=0$, so these term do not contribute.
	
	Summing up these contributions and using \eqref{dD'}, we arrive at 
	\[\int_{\p F_\Si} A=\sum_1^g [\si_{A_{i}}(v_i)-\si_{A_{i}}(v_{i+1}) +\si_{B_{i}}(w_i)-\si_{B_{i}}(w_{i+1})],\]
	where $v_{i}, v_{i+1}$ and  $w_{i}, w_{i+1}$  
	are the vertices of $F_\si$ spanned by $\al_{i}$ and $\beta_{i}$,  
	respectively. 
	Note that the vertices $v_{i}, v_{i+1}$, $w_{i}, w_{i+1}$ and $u_{i}, u_{i+1}$ are related as  $w_{i}=v_{i+1},  w_{i+1}=A_i v_{i+1} $ and $v_{i+1}= B_{i}^{-1}v_{i}$, which gives 
	
\begin{equation}\label{C.1}
		\frac{1}{2\pi}\int_{\p F_\Si} A =\frac{1}{2\pi}\sum_1^g [\si_{A_{i}}(v_i)-\si_{A_{i}}(B_{i}^{-1}v_{i}) +\si_{B_{i}}(B_{i}^{-1}v_{i})-\si_{B_{i}}(A_i B_{i}^{-1}v_{i})].
\end{equation}
	By construction, $$v_{i}=B_i A_{i}^{-1}B_{i}^{-1} C_{i+1}\dots C_g z_0= A_{i}^{-1} C_{i}\dots C_g z_0,$$
	with $C_{i}:= A_{i} B_i A_{i}^{-1}B_{i}^{-1}\text{  and some } z_0\in F_\Si.$
	Hence, r.h.s. of \eqref{C.1} agrees with the definition of the first Chern number
	$c_1$ in \eqref{c1g}. This proves the theorem. \end{proof}  

{\section{ Proof of Theorem \ref{Kras}}\label{secPfKras}
	
	One reduces to a scalar bifurcation problem by considering the scalar-valued function
	\begin{eqnarray} \label{phicond}
		\phi(\ell, s) &=& \frac1{|s|^2}\langle s, \ell s + H(\ell, s)\rangle.
	\end{eqnarray}
	Manifestly, $\phi(0,0) = 0$ and $\partial \phi (0,0)/\partial \ell = 1$. It then follows from the implicit function theorem that there exists a unique function $\ell(s)$,
	for small $s$, such that $\phi(\ell(s), s) = 0$. {Set $\hat{E}(s) = E(\ell(s), s)$, where, recall, $E(\ell,s)=\ell s+H(\ell,s)$,  and let  $S_\epsilon:=\Set{s\in\Rb^n:\abs{s}=\eps}$ be the $\epsilon$-sphere in $\mathbb{R}^n$.}
	Then $\hat{E}(s)$, regarded as a vector field on $\mathbb{R}^n$, is everywhere tangent to $S_\epsilon$. To see this, simply note that 
	\begin{eqnarray*}
		0 &=& \phi(\ell(s), s)\\
		&=& \frac1{|s|^2}\langle s, \ell(s) s + H(\ell(s), s)\rangle\\
		&=& \frac1{|s|^2}\langle s, \hat{E}(s)\rangle,
	\end{eqnarray*}
	which precisely states that the vector field $\hat E(s)$ is everywhere tangent to the sphere. Since $n$ is odd, the sphere is even dimensional. But every 
	vector field on an even dimensional sphere has as zero vector, i.e. $\hat E(s)=0$ has a solution on $S_\eps$. The theorem follows.}


\begin{bibdiv}
	\begin{biblist}
		
		\bib{Abrikosov}{article}{
			author={Abrikosov, A.~A.},
			title={{On the Magnetic properties of superconductors of the second
					group}},
			date={1957},
			journal={Sov. Phys. JETP},
			volume={5},
			pages={1174\ndash 1182},
			note={[Zh. Eksp. Teor. Fiz.32,1442(1957)]},
		}
		
		\bib{MR1069475}{article}{
			author={Antoine, Michel},
			author={Comtet, Alain},
			author={Ouvry, St\'{e}phane},
			title={Scattering on a hyperbolic torus in a constant magnetic field},
			date={1990},
			ISSN={0305-4470},
			journal={J. Phys. A},
			volume={23},
			number={16},
			pages={3699\ndash 3710},
			url={http://stacks.iop.org.ep.fjernadgang.kb.dk/0305-4470/23/3699},
			review={\MR{1069475}},
		}
		
		\bib{MR1086749}{article}{
			author={Bradlow, Steven~B.},
			title={Vortices in holomorphic line bundles over closed {K}\"{a}hler
				manifolds},
			date={1990},
			ISSN={0010-3616},
			journal={Comm. Math. Phys.},
			volume={135},
			number={1},
			pages={1\ndash 17},
			url={http://projecteuclid.org/euclid.cmp/1104201917},
			review={\MR{1086749}},
		}
		
		\bib{MR1431508}{book}{
			author={Bump, Daniel},
			title={Automorphic forms and representations},
			series={Cambridge Studies in Advanced Mathematics},
			publisher={Cambridge University Press, Cambridge},
			date={1997},
			volume={55},
			ISBN={0-521-55098-X},
			url={https://doi-org.ep.fjernadgang.kb.dk/10.1017/CBO9780511609572},
			review={\MR{1431508}},
		}
		
		\bib{MR4049917}{article}{
			author={Chouchkov, D.},
			author={Ercolani, N.~M.},
			author={Rayan, S.},
			author={Sigal, I.~M.},
			title={Ginzburg-{L}andau equations on {R}iemann surfaces of higher
				genus},
			date={2020},
			ISSN={0294-1449},
			journal={Ann. Inst. H. Poincar\'{e} Anal. Non Lin\'{e}aire},
			volume={37},
			number={1},
			pages={79\ndash 103},
			url={https://doi-org.ep.fjernadgang.kb.dk/10.1016/j.anihpc.2019.04.002},
			review={\MR{4049917}},
		}
		
		\bib{MR660633}{book}{
			author={Chow, Shui~Nee},
			author={Hale, Jack~K.},
			title={Methods of bifurcation theory},
			series={Grundlehren der Mathematischen Wissenschaften [Fundamental
				Principles of Mathematical Sciences]},
			publisher={Springer-Verlag, New York-Berlin},
			date={1982},
			volume={251},
			ISBN={0-387-90664-9},
			review={\MR{660633}},
		}
		
		\bib{MR776146}{article}{
			author={Comtet, A.},
			author={Houston, P.~J.},
			title={Effective action on the hyperbolic plane in a constant external
				field},
			date={1985},
			ISSN={0022-2488},
			journal={J. Math. Phys.},
			volume={26},
			number={1},
			pages={185\ndash 191},
			url={https://doi-org.ep.fjernadgang.kb.dk/10.1063/1.526781},
			review={\MR{776146}},
		}
		
		\bib{MR870891}{article}{
			author={Comtet, Alain},
			title={On the {L}andau levels on the hyperbolic plane},
			date={1987},
			ISSN={0003-4916},
			journal={Ann. Physics},
			volume={173},
			number={1},
			pages={185\ndash 209},
			url={https://doi-org.ep.fjernadgang.kb.dk/10.1016/0003-4916(87)90098-4},
			review={\MR{870891}},
		}
		
		\bib{MR883643}{book}{
			author={Cycon, H.~L.},
			author={Froese, R.~G.},
			author={Kirsch, W.},
			author={Simon, B.},
			title={Schr\"{o}dinger operators with application to quantum mechanics
				and global geometry},
			edition={Study},
			series={Texts and Monographs in Physics},
			publisher={Springer-Verlag, Berlin},
			date={1987},
			ISBN={3-540-16758-7},
			review={\MR{883643}},
		}
		
		\bib{MR2112196}{book}{
			author={Diamond, Fred},
			author={Shurman, Jerry},
			title={A first course in modular forms},
			series={Graduate Texts in Mathematics},
			publisher={Springer-Verlag, New York},
			date={2005},
			volume={228},
			ISBN={0-387-23229-X},
			review={\MR{2112196}},
		}
		
		\bib{MR1079726}{book}{
			author={Donaldson, S.~K.},
			author={Kronheimer, P.~B.},
			title={The geometry of four-manifolds},
			series={Oxford Mathematical Monographs},
			publisher={The Clarendon Press, Oxford University Press, New York},
			date={1990},
			ISBN={0-19-853553-8},
			note={Oxford Science Publications},
			review={\MR{1079726}},
		}
		
		\bib{MR729759}{article}{
			author={Donnelly, Harold},
			author={Xavier, Frederico},
			title={On the differential form spectrum of negatively curved
				{R}iemannian manifolds},
			date={1984},
			ISSN={0002-9327},
			journal={Amer. J. Math.},
			volume={106},
			number={1},
			pages={169\ndash 185},
			url={https://doi.org/10.2307/2374434},
			review={\MR{729759}},
		}
		
		\bib{MR931205}{article}{
			author={Duke, W.},
			title={Hyperbolic distribution problems and half-integral weight {M}aass
				forms},
			date={1988},
			ISSN={0020-9910},
			journal={Invent. Math.},
			volume={92},
			number={1},
			pages={73\ndash 90},
			url={https://doi-org.ep.fjernadgang.kb.dk/10.1007/BF01393993},
			review={\MR{931205}},
		}
		
		\bib{MR1773820}{book}{
			author={Gradshteyn, I.~S.},
			author={Ryzhik, I.~M.},
			title={Table of integrals, series, and products},
			edition={Sixth},
			publisher={Academic Press, Inc., San Diego, CA},
			date={2000},
			ISBN={0-12-294757-6},
			note={Translated from the Russian, Translation edited and with a
				preface by Alan Jeffrey and Daniel Zwillinger},
			review={\MR{1773820}},
		}
	

		\bib{Gunning62}{article}{
			author={Gunning, R.~C.},
			title={The structure of factors of automorphy},
			date={1956},
			ISSN={00029327, 10806377},
			journal={American Journal of Mathematics},
			volume={78},
			number={2},
			pages={357\ndash 382},
			url={http://www.jstor.org/stable/2372521},
		}
		
		\bib{MR0132828}{book}{
			author={Gunning, R.~C.},
			title={Lectures on modular forms},
			series={Annals of Mathematics Studies, No. 48},
			publisher={Princeton University Press, Princeton, N.J.},
			date={1962},
			note={Notes by Armand Brumer},
			review={\MR{0132828}},
		}
		
		\bib{MR0457787}{book}{
			author={Gunning, Robert~C.},
			title={Riemann surfaces and generalized theta functions},
			series={Ergebnisse der Mathematik und ihrer Grenzgebiete, Band 91},
			publisher={Springer-Verlag, Berlin-New York},
			date={1976},
			review={\MR{0457787}},
		}
	
	\bib{MR0463157}{book}{
		AUTHOR = {Hartshorne, Robin},
		TITLE = {Algebraic geometry},
		SERIES = {Graduate Texts in Mathematics, No. 52},
		PUBLISHER = {Springer-Verlag, New York-Heidelberg},
		YEAR = {1977},
		PAGES = {xvi+496},
		ISBN = {0-387-90244-9},
		review = {\MR{0463157}},
	}
		
		\bib{MR1361167}{book}{
			author={Hislop, P.~D.},
			author={Sigal, I.~M.},
			title={Introduction to spectral theory},
			series={Applied Mathematical Sciences},
			publisher={Springer-Verlag, New York},
			date={1996},
			volume={113},
			ISBN={0-387-94501-6},
			url={https://doi-org.ep.fjernadgang.kb.dk/10.1007/978-1-4612-0741-2},
			note={With applications to Schr\"{o}dinger operators},
			review={\MR{1361167}},
		}
		
		\bib{MR870736}{article}{
			author={Iwaniec, Henryk},
			title={Fourier coefficients of modular forms of half-integral weight},
			date={1987},
			ISSN={0020-9910},
			journal={Invent. Math.},
			volume={87},
			number={2},
			pages={385\ndash 401},
			url={https://doi-org.ep.fjernadgang.kb.dk/10.1007/BF01389423},
			review={\MR{870736}},
		}
		
		\bib{MR1942691}{book}{
			author={Iwaniec, Henryk},
			title={Spectral methods of automorphic forms},
			edition={Second},
			series={Graduate Studies in Mathematics},
			publisher={American Mathematical Society, Providence, RI; Revista
				Matem\'{a}tica Iberoamericana, Madrid},
			date={2002},
			volume={53},
			ISBN={0-8218-3160-7},
			url={https://doi-org.ep.fjernadgang.kb.dk/10.1090/gsm/053},
			review={\MR{1942691}},
		}
		
		\bib{MR3790522}{article}{
			author={Nagy, \'{A}kos},
			title={Irreducible {G}inzburg-{L}andau fields in dimension 2},
			date={2018},
			ISSN={1050-6926},
			journal={J. Geom. Anal.},
			volume={28},
			number={2},
			pages={1853\ndash 1868},
			url={https://doi-org.ep.fjernadgang.kb.dk/10.1007/s12220-017-9890-4},
			review={\MR{3790522}},
		}
		
		\bib{nagy2022nonminimal}{misc}{
			author={Nagy,  \'{A}kos},
			author={Oliveira, Gonçalo},
			title={Nonminimal solutions to the ginzburg-landau equations},
			date={2022},
		}
		
		\bib{MR1426137}{incollection}{
			author={Nelson, David~R.},
			title={Defects in superfluids, superconductors and membranes},
			date={1996},
			booktitle={G\'{e}om\'{e}tries fluctuantes en m\'{e}canique statistique et en
				th\'{e}orie des champs ({L}es {H}ouches, 1994)},
			publisher={North-Holland, Amsterdam},
			pages={423\ndash 477},
			review={\MR{1426137}},
		}
		
		\bib{MR4213771}{article}{
			author={Pigati, Alessandro},
			author={Stern, Daniel},
			title={Minimal submanifolds from the abelian {H}iggs model},
			date={2021},
			ISSN={0020-9910},
			journal={Invent. Math.},
			volume={223},
			number={3},
			pages={1027\ndash 1095},
			url={https://doi-org.ep.fjernadgang.kb.dk/10.1007/s00222-020-01000-6},
			review={\MR{4213771}},
		}
		
		\bib{Pnueli}{article}{
			author={{Pnueli}, A.},
			title={{Scattering Matrices and Conductances of Leaky Tori}},
			date={1994-04},
			journal={Annals of Physics},
			volume={231},
			number={1},
			pages={56\ndash 83},
		}
		
		\bib{MR1451329}{article}{
			author={Qing, Jie},
			title={Renormalized energy for {G}inzburg-{L}andau vortices on closed
				surfaces},
			date={1997},
			ISSN={0025-5874},
			journal={Math. Z.},
			volume={225},
			number={1},
			pages={1\ndash 34},
			url={https://doi.org/10.1007/PL00004303},
			review={\MR{1451329}},
		}
		
		\bib{MR0493421}{book}{
			author={Reed, Michael},
			author={Simon, Barry},
			title={Methods of modern mathematical physics. {IV}. {A}nalysis of
				operators},
			publisher={Academic Press [Harcourt Brace Jovanovich, Publishers], New
				York-London},
			date={1978},
			ISBN={0-12-585004-2},
			review={\MR{0493421}},
		}
		
		\bib{MR750670}{incollection}{
			author={Sarnak, Peter},
			title={Additive number theory and {M}aass forms},
			date={1984},
			booktitle={Number theory ({N}ew {Y}ork, 1982)},
			series={Lecture Notes in Math.},
			volume={1052},
			publisher={Springer, Berlin},
			pages={286\ndash 309},
			url={https://doi-org.ep.fjernadgang.kb.dk/10.1007/BFb0071548},
			review={\MR{750670}},
		}
		
		\bib{MR0314766}{book}{
			author={Shimura, Goro},
			title={Introduction to the arithmetic theory of automorphic functions},
			series={Kan\^{o} Memorial Lectures, No. 1},
			publisher={Publications of the Mathematical Society of Japan, No. 11.
				Iwanami Shoten, Publishers, Tokyo; Princeton University Press, Princeton,
				N.J.},
			date={1971},
			review={\MR{0314766}},
		}
		
		\bib{MR3381531}{incollection}{
			author={Sigal, Israel~Michael},
			title={Magnetic vortices, {A}brikosov lattices, and automorphic
				functions},
			date={2015},
			booktitle={Mathematical and computational modeling},
			series={Pure Appl. Math. (Hoboken)},
			publisher={Wiley, Hoboken, NJ},
			pages={19\ndash 58},
			review={\MR{3381531}},
		}
		
		\bib{MR2904275}{article}{
			author={Sigal, Israel~Michael},
			author={Tzaneteas, Tim},
			title={Stability of {A}brikosov lattices under gauge-periodic
				perturbations},
			date={2012},
			ISSN={0951-7715},
			journal={Nonlinearity},
			volume={25},
			number={4},
			pages={1187\ndash 1210},
			url={https://doi-org.ep.fjernadgang.kb.dk/10.1088/0951-7715/25/4/1187},
			review={\MR{2904275}},
		}
		
		\bib{MR3758428}{article}{
			author={Sigal, Israel~Michael},
			author={Tzaneteas, Tim},
			title={On stability of {A}brikosov vortex lattices},
			date={2018},
			ISSN={0001-8708},
			journal={Adv. Math.},
			volume={326},
			pages={108\ndash 199},
			url={https://doi-org.ep.fjernadgang.kb.dk/10.1016/j.aim.2017.11.031},
			review={\MR{3758428}},
		}
		
		\bib{MR2560758}{incollection}{
			author={Tzaneteas, T.},
			author={Sigal, I.~M.},
			title={Abrikosov lattice solutions of the {G}inzburg-{L}andau
				equations},
			date={2011},
			booktitle={Spectral theory and geometric analysis},
			series={Contemp. Math.},
			volume={535},
			publisher={Amer. Math. Soc., Providence, RI},
			pages={195\ndash 213},
			url={https://doi-org.ep.fjernadgang.kb.dk/10.1090/conm/535/10542},
			review={\MR{2560758}},
		}
		
		\bib{MR3123370}{article}{
			author={Tzaneteas, T.},
			author={Sigal, I.~M.},
			title={On {A}brikosov lattice solutions of the {G}inzburg-{L}andau
				equations},
			date={2013},
			ISSN={0973-5348},
			journal={Math. Model. Nat. Phenom.},
			volume={8},
			number={5},
			pages={190\ndash 205},
			url={https://doi-org.ep.fjernadgang.kb.dk/10.1051/mmnp/20138512},
			review={\MR{3123370}},
		}
		
	\end{biblist}
\end{bibdiv}

\end{document}